\documentclass[12pt]{amsart}

\usepackage{enumerate}

\makeatletter

\@namedef{subjclassname@2010}{

  \textup{2010} Mathematics Subject Classification}

\usepackage{amssymb, amsmath,amsthm}
\newtheorem{thm}{Theorem}[section]
\newtheorem{prop}[thm]{Proposition}
\newtheorem{conj}[thm]{Conjecture}

\newtheorem{cor}[thm]{Corollary}
\newtheorem{lem}[thm]{Lemma}
\theoremstyle{definition}

\newtheorem{def1}[thm]{Definition}

\newcommand{\ra}{\rightarrow}
\newcommand{\bk}{\backslash}
\newcommand{\mc}{\mathcal}
\newcommand{\mf}{\mathfrak}
\newcommand{\mb}{\mathbb}
\newcommand{\sg}{\sigma}

\newcommand{\llf}{\left\lfloor}

\newcommand{\e}{\varepsilon}
\newcommand{\rrf}{\right\rfloor}

\newcommand{\mbf}{\boldsymbol}
\renewcommand{\bar}{\overline}

%\setlength\parindent{0pt}
%\usepackage[parfill]{parskip}
%\parskip = 2\baselineskip
\frenchspacing

\textwidth=15.5cm

\textheight=23cm

\parindent=16pt

\oddsidemargin=0cm

\evensidemargin=0cm

\topmargin=-0.5cm

\begin{document}
\title{On the orbits of multiplicative pairs}
\author
{Oleksiy Klurman}
\address{Department of Mathematics, KTH Royal Institute of Technology, Stockholm, Sweden}
\email{lklurman@gmail.com}
%\address{D\'{e}partement de Math\'{e}matiques et de Statistique\\ Universit\'{e} de Montr\'{e}al\\
%Montr\'{e}al, Qu\'{e}bec, Canada} 
%\address{Department of Mathematics, University College London, Gower Street, London, WC1E 6BT, UK}
%\email{lklurman@gmail.com}
\author{Alexander P. Mangerel}
\address{Centre de Recherches Math\'{e}matiques \\ Universit\'{e} de Montr\'{e}al\\
Montr\'{e}al, Qu\'{e}bec}
\email{smangerel@gmail.com}
\dedicatory{Dedicated to Imre K\'{a}tai on the occasion of his 80th birthday}

\begin{abstract}
We characterize all pairs of completely multiplicative functions $f,g:\mathbb{N}\to\mathbb{T}$, such that \[\overline{\{(f(n),g(n+1))\}_{n\ge 1}} \neq \mb{T} \times \mb{T}.\]
In so doing, we settle an old conjecture of Zolt\'{a}n Dar\'{o}czy and Imre K\'atai. \end{abstract}
\maketitle
\section{Introduction}
In this paper, we will be concerned with demonstrating yet another instance of the expected general phenomenon that the multiplicative structure of positive integers should in general be ``independent'' of their additive structure. Of principal focus here will be the behaviour of multiplicative functions at consecutive integers. \\
Problems of this kind are widely open in general, though spectacular progress has recently been made as a consequence of the breakthrough of Matom{\"a}ki and Radziwi{\l}{\l}~\cite{MaR}, and subsequent work of Matom{\"a}ki, Radziwi{\l}{\l} and Tao~\cite{MRT}, Tao \cite{Tao} and, more recently, Tao and Ter\"{a}v\"{a}inen~\cite{TaT}. In particular, using the work in \cite{MaR}, Tao~\cite{Tao} established a weighted version of the binary Chowla conjecture in the form
\[\sum_{n\le x}\frac{\lambda(n)\lambda(n+h)}{n}=o(\log x)\]
for all $h\ge 1.$ For a comprehensive account of the recent developments in this direction, see \cite{MRICM}.\\
Let $\mathbb{U}$ denote the unit disc in $\mathbb{C}$ and let $\mathbb{T}$ denote the unit circle. Let $f,g:\mathbb{N}\to\mathbb{T}$ be completely multiplicative functions. We expect that as $n$ varies through the set of positive integers, the values $f(n)$ and $g(n+1)$ should, roughly speaking, be independently distributed unless $f$ and $g$ satisfy some rigid relations. To be more precise, we shall investigate the following problem: if $\{f(n)\}_n$ and $\{g(n)\}_n$ are both dense in $\mb{T}$, but the sequence of pairs $\{(f(n),g(n+1))\}_n$ is \emph{not} dense in $\mb{T}^2$, must there be a rigid relation between $f$ and $g$? This multidimensional problem continues work on rigidity problems for additive and multiplicative functions initiated by the authors in \cite{RIG}. \\
This problem has a natural dynamical flavour, which explains the title of this paper. Let $T: \mb{N} \ra \mb{N}$ denote the rightward shift map $T(n) := n+1$. In this case, the above problem can be recast in terms of orbits of the pair $(f,g_{T})$, where $f,g: \mb{N} \ra \mb{T}$ are semigroup homomorphisms that fix $n = 1$, and $g_{T} := g \circ T$. We seek a result of the kind that, unless $f$ and $g$ are specially chosen maps, the orbit closure of the point $1$, i.e., $\bar{\{(f(n),g(n+1))\}_n}$, is expected to be the same as the product of the closures of the marginal orbits $\{f\circ T^n\}_n$ and $\{g\circ T^n\}_n$. \\
%Related to the Chowla conjecture, we shall focus on the following speccific problem: given completely multiplicative functions $f_1,\ldots,f_k : \mb{N} \ra \mb{T}$ such that the orbit $\{(f_1(n),f_2(n+1),\ldots,f_k(n+k-1))\}_{n\ge 1}$ is not dense in $\mb{T}^k$ then must each of the functions $f_j$ have a special form, and/or must there be some rigid relationship among the functions $f_j$? 
%\subsection{On the Two-Dimensional Distribution of Completely Additive Functions}
%\begin{conj}[K\'{a}tai; Additive Version] \label{ADD}
%Let $a,b: \mb{N} \ra \mb{R}/\mb{Z}$ be completely additive. Suppose $\{(a(n),b(n+1))\}_n$ is not dense in $(\mb{R}/\mb{Z})^2$, yet $\{a(n)\}_n$ and $\{b(n)\}_n$ are both dense in $\mb{R}/\mb{Z}$. Then there are integers $k$ and $l$ such that $ka(n) = lb(n)$ and $a(n) = t\log n$ for all $n$ and some $t \in \mb{R}$.
%\end{conj}
%We may translate this statement into one regarding multiplicative functions by considering the unimodular completely multiplicative functions $f(n) := e(a(n))$ and $g(n) := e(b(n))$, where we write $e(t) := e^{2\pi it }$. Since $t \mapsto e(t)$ is a homeomorphism of $\mb{R}/\mb{Z}$ onto $\mb{T}$, we can reformulate Conjecture \ref{ADD} in the following way in this language. 
In this connection, we quote Conjecture 3 in the survey paper \cite{SUR} by K\'atai (earlier formulated in~\cite{DarKat}) which is the particular case $k=2$.
\begin{conj} \label{MULT}
Let $f,g: \mb{N} \ra \mb{T}$ be completely multiplicative. Suppose $\{(f(n),g(n+1))\}_n$ is not dense in $\mb{T}^2$, yet $\{f(n)\}_n$ and $\{g(n)\}_n$ are both dense in $\mb{T}$. Then there are integers $k$ and $l$ such that $f(n)^k = g(n)^l$, with $f(n) = n^{it}$ for some $t$.
\end{conj}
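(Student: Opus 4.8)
The plan is to translate the failure of density into an analytic statement about correlations of powers of $f$ and $g$ at consecutive integers, feed this into the recent structure theory for such correlations to force $f$ and $g$ into a pretentious shape, and finally bootstrap that approximate information into the exact algebraic relations using the density of $\{f(n)\}_n$ and $\{g(n)\}_n$. Concretely, suppose $\overline{\{(f(n),g(n+1))\}_n}$ omits an open ball $B\subset\mb{T}^2$, and pick a smooth $\phi\ge 0$ supported in $B$ with $\widehat\phi(0,0)=\int_{\mb{T}^2}\phi>0$. Then $\phi(f(n),g(n+1))=0$ for all $n$, so a Fourier expansion gives $\sum_{(k,\ell)\in\mb{Z}^2}\widehat\phi(k,\ell)\,\frac1{\log x}\sum_{n\le x}\frac{f(n)^k g(n+1)^\ell}{n}=0$ for every $x$. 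Passing to a subsequence of scales along which every logarithmic correlation converges (a diagonal argument suffices) and isolating the $(0,0)$ term forces $M_{k,\ell}\neq 0$ for some $(k,\ell)\neq(0,0)$, where $M_{k,\ell}$ is the limiting logarithmic mean of $n\mapsto f(n)^k g(n+1)^\ell$. The density of each marginal must then be used — this is the delicate point — to dispose of the indices with $k\ell=0$ and to reduce to: there are nonzero integers $k,\ell$ such that the completely multiplicative $F:=f^k$ and $G:=g^\ell$ satisfy $\frac1{\log x}\sum_{n\le x}\frac{F(n)G(n+1)}{n}\not\to 0$.

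\emph{Structure of the correlation.} I would next invoke the structure theorem for non-vanishing logarithmic correlations of completely multiplicative functions at the shift $1$ — this rests decisively on Matom\"aki--Radziwi{\l}{\l}--Tao \cite{MRT}, Tao \cite{Tao} and Tao--Ter\"av\"ainen \cite{TaT}. The conclusion to extract is that $F$ and $G$ are pretentious in a coupled way: there exist Dirichlet characters $\chi_1,\chi_2$ and a real $t$ with $\mb{D}(F,\chi_1 n^{it};\infty)<\infty$ and $\mb{D}(G,\chi_2 n^{-it};\infty)<\infty$, the shift being exactly what pins the archimedean exponents of $F$ and $G$ to be negatives of one another. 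Thus $f^k$ and $g^\ell$ sit at finite pretentious distance from explicit twisted characters.

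\emph{Bootstrapping to an exact identity.} The decisive step is to promote this to an exact relation, and here the hypothesis that $\{f(n)\}_n$ and $\{g(n)\}_n$ are dense — equivalently, that $f,g$ have infinite order — is what does the work. When the correlation has modulus $1$ one gets $F(n)G(n+1)=c$ for all $n$ outright; in general one first extracts from the pretentious approximation a genuine identity on a density-one set, then propagates it to all integers by comparing $n$ with $2n,3n,\dots$ in a K\'atai-style functional-equation argument, arriving at a relation of the shape $F(ab+a+b)=c\,F(a)F(b)$ for all $a,b\ge 1$. Substituting complete multiplicativity of $F$ at small arguments into this forces $c=1$ and then $F(n)=n^{is}$ for a suitable real $s$ (density excludes the finite-order solutions). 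Unwinding, $f(n)^k=n^{iks}$ and $g^\ell$ becomes an explicit power of $n^{is}$; in the principal-character case this is precisely the assertion $f(n)=n^{it}$ together with integers $k,\ell$ such that $f^k=g^\ell$, and the non-principal case yields the mildly more general shape ($f,g$ equal to $n^{it}$-powers times finite-order characters away from finitely many primes) that the full characterization must record.

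\emph{Main obstacle.} I expect the crux to be the first step: non-density is purely topological and strictly weaker than non-equidistribution for any natural averaging, so turning it into a clean analytic input forces one to use the density of the two marginals \emph{quantitatively} — controlling $f(n)$ and $g(n+1)$ jointly along short intervals and arithmetic progressions, and ruling out purely local obstructions to the pair being dense while both marginals are. This is exactly where the Matom\"aki--Radziwi{\l}{\l} technology \cite{MaR} and its refinements are indispensable. A secondary difficulty is the bookkeeping in the last step, where the Dirichlet characters and real parameters attached separately to $f$ and to $g$ must be reconciled simultaneously.
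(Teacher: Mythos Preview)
The statement you are trying to prove is \emph{false}; the paper says so immediately after stating it and gives two families of counterexamples before proving an amended version (Theorem~\ref{EKCFULL}). So your argument must break, and it does in at least two places. For the first family, take $f(n)=h_1(n)n^{it}$, $g(n)=h_2(n)n^{it}$ with $t\neq 0$ and $h_1^k=h_2^l=1$ for minimal $k,l\geq 2$: both marginals are dense (the $n^{it}$ factor alone ensures this), the pair $(f(n),g(n+1))$ lies on finitely many translates of the diagonal $\{(z,z)\}$ and is not dense, yet $f(n)\neq n^{is}$ for any real $s$. This defeats your bootstrapping step: density of $\{f(n)\}_n$ does \emph{not} ``exclude the finite-order solutions'' and force $f$ to be an archimedean character --- a finite-order twist of $n^{it}$ is still dense. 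For the second family, fix a prime $p$ and distinct irrationals $\alpha,\beta$, and set $f(p)=e(\alpha)$, $g(p)=e(\beta)$, $f(q)=g(q)=1$ for all primes $q\neq p$. Both marginals are dense (along $n=p^m$), and since $p$ divides at most one of $n,n+1$ the pair lives in $(\mb{T}\times\{1\})\cup(\{1\}\times\mb{T})$. Here every $f^k$ is $1$-pretentious, so your structure step yields trivial characters and exponent $0$; the identity $F(n)G(n+1)=1$ holds on the density-one set $\{n:p\nmid n(n+1)\}$ but fails whenever $p\mid n(n+1)$, so your K\'atai-style propagation from a density-one set to all $n$ cannot close; and density of $\{f(n)\}_n$ does not make the marginal means $M_{k,0}$ vanish (here $M_{k,0}=(1-1/p)\sum_{j\geq 0}e(jk\alpha)p^{-j}\neq 0$), so you cannot ``dispose of the indices with $k\ell=0$'' either.

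The paper's actual theorem adds the hypothesis that for infinitely many $m$ either $|T_{f^m}||T_{g^m}|>1$ or $T_{f^m}\neq T_{g^m}$ (which rules out the second family) and weakens the conclusion to $f(n)^k=n^{it}$ for some $k\geq 1$ (which accommodates the first); its proof then splits into eventually-rational and irrational cases and uses a presieving-on-level-sets argument rather than a direct functional-equation bootstrap.
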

As stated this conjecture is easily seen to be false, as we can construct the following two types of counterexamples:
\begin{enumerate}[(i)]
\item Let $h_1,h_2:\mb{N}\to\mb{T}$ be completely multiplicative functions such that there are minimal positive integers $k,l \geq 2$ for which $h_1^k=h_2^{\l}=1.$ Fixing an arbitrary $t \in \mb{R}\bk \{0\}$ and setting $f(n) := h_1(n)n^{it}$ and $g(n) := h_2(n)n^{it}$ yields a pair of completely multiplicative functions such that $\{f(n)\}_n$ and $\{g(n)\}_n$ are dense, yet $\{(f(n),g(n+1))\}_n$ cannot be dense (as the components differ by at most a root of unity of bounded order). On the other hand, it is true in this example that $f(n)^k = n^{it'}$ and $g(n)^l = n^{it''}$ for all $n$, where $t' = kt$ and $t'' = lt$.
\item Fix a prime $p$ and distinct irrational numbers $\alpha, \beta \in \mb{R}$. Let $f,g:\mb{N} \ra \mb{T}$ be the completely multiplicative function defined on primes by 
$$f(q):= \begin{cases} e(\alpha) &: \text{ $q=p$}\\ 1 &: \text{ $q \neq p$}\end{cases} \ \ g(q):= \begin{cases} e(\beta) &: \text{ $q=p$}\\ 1 &: \text{ $q \neq p$.}\end{cases}$$
%$f(q) = 1$ if $q \neq p$ and $f(p) = e(\alpha)$; similarly, define $g(q) = 1$ if $q \neq p$ and $g(p) = e(\beta)$. 
It is easy to see that the sequence $\{(f(n),g(n+1))\}_n$ belongs to the union of the sets $\mb{T} \times \{1\}$, $\{1\} \times \mb{T}$ and $\{1\} \times \{1\}$ and thus cannot be dense. On the other hand we clearly have $\overline{\{f(p^m)\}_{m\ge 1}}=\overline{\{g(q^m)\}_{m\ge 1}}=\mb{T}.$
\end{enumerate}
Given a completely multiplicative function $h$, let $$T_h := \{p \text{ prime}: h(p) \neq 1\}.$$ 
%Note that if $h = e^{2\pi i a}$, where $a$ is a completely additive function, $T_h$ corresponds to the set of primes on which $a$ is supported. We will write $\tilde{T}_a$ to correspond to this same set of primes, keeping in mind that $a(p) = 0$  (mod $1$) for $p \notin \tilde{T}_a$. In particular, the set $\tilde{T}_{ka} = T_{h^k}$ corresponds to the sets of primes $p$ for which $a(p)$ (mod 1) is not a $k$-torsion point. \\
%For this reason, we refer to elements of $$\mc{T}_h = \mc{\tilde{T}}_a := \bigcap_{K \geq 1} \bigcup_{k \geq K} \tilde{T}_{ka}$$ as the \emph{non-torsion primes} of $a$. \\
The collection (i) of counterexamples suggests that we should relax the conclusion of Conjecture \ref{MULT} by allowing $f(n)^k = n^{it}$ for some $k \geq 1$. The collection (ii) of counterexamples indicates that we should add a hypothesis to exclude those functions $f$ and $g$ such that both $|T_{f^k}||T_{g^k}| = 1$ and $T_{f^k} = T_{g^k}$ hold for all sufficiently large $k$. \\
%and $T_f:= \{p : g(p) \neq 1\}$. 
%The construction given above can only be generalized such that . 
We thus prove the following amendment of Conjecture \ref{MULT}, in which the above types of counterexamples are excluded.
\begin{thm}[Amended Dar\'{o}czy-K\'{a}tai Conjecture]\label{EKCFULL}
Let $f,g: \mb{N} \ra \mb{T}$ be completely multiplicative. Suppose $\overline{\{(f(n),g(n+1))\}_n}\ne \mb{T}^2,$ yet $\overline{\{f(n)\}_n}=\overline{\{g(n)\}_n}=\mb{T}.$ Suppose additionally that for infinitely many $m$, either $|T_{f^m}||T_{g^m}| > 1$ or $T_{f^m} \neq T_{g^m}$. Then there are integers $k$ and $l$ such that $f(n)^k = g(n)^l$, with $f(n)^{k} = n^{it}$ for some $t \in \mb{R}$. The above necessary conditions are also sufficient.
\end{thm}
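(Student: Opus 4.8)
\emph{Sufficiency.} This direction I expect to be routine. Assuming $\overline{\{f(n)\}_n}=\overline{\{g(n)\}_n}=\mb{T}$ and $f(n)^k=g(n)^l=n^{it}$ for all $n$, with nonzero integers $k,l$ and some $t\in\mb{R}$, one first notes $t\neq0$: otherwise $f^k\equiv1$, so $\{f(n)\}_n$ takes values in the finitely many $|k|$-th roots of unity, contradicting its density. Then $f(n)^k\,\overline{g(n+1)^{l}}=n^{it}(n+1)^{-it}=e^{it\log(n/(n+1))}\to1$, so the orbit --- hence its closure --- lies in $\{(x,y)\in\mb{T}^2: x^k\bar y^{\,l}\in S\}$, where $S:=\overline{\{(n/(n+1))^{it}\}_n}$ is a countable set; the preimage of a countable set under the surjective homomorphism $(x,y)\mapsto x^k\bar y^{\,l}$ is nowhere dense, so the pair is not dense. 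Finally, $t\neq0$ together with the $\mb{Q}$-linear independence of $\{\log p\}_p$ forces $T_{f^{km}}=\{p:p^{imt}\neq1\}$ to omit at most one prime, hence to be infinite, for every $m$; since $\{g(n)\}_n$ is dense, $g$ is not identically a root of unity, so $T_{g^{km}}\neq\emptyset$, and thus $|T_{f^{km}}||T_{g^{km}}|>1$ for every $m$: the last hypothesis holds.

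\emph{Necessity, Steps 1--2 (from non-density to pretentious structure).} Assume the hypotheses of the theorem. Since $\{f(n)\}_n$ and $\{g(n)\}_n$ are dense, were $\{(f(n),g(n+1))\}_n$ equidistributed for the Haar measure on $\mb{T}^2$ it would be dense; so it is not equidistributed. The plan is to pass to a weak-$*$ limit $\mu$ of the empirical measures $x^{-1}\sum_{n\le x}\delta_{(f(n),g(n+1))}$ along some $x_j\to\infty$; as $\overline{\{(f(n),g(n+1))\}_n}$ misses a ball, $\mu$ is not Haar measure, so $x_j^{-1}\sum_{n\le x_j}f(n)^a g(n+1)^b\to c\neq0$ for some $(a,b)\in\mb{Z}^2\setminus\{0\}$. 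I then invoke the pretentious toolbox. If $b=0$ (resp. $a=0$), Hal\'{a}sz's theorem applied to $f^a$ (resp. $g^b$) shows it pretends to $n^{i\tau}$ for some $\tau\in\mb{R}$. If $ab\neq0$, the classification of non-vanishing binary correlations of multiplicative functions (resting on~\cite{MRT}, \cite{Tao}, in the spirit of~\cite{RIG}) shows $f^a$ pretends to $\chi_1 n^{i\tau}$ and $g^b$ to $\chi_2 n^{-i\tau}$ for Dirichlet characters $\chi_1,\chi_2$. Raising to suitable powers to absorb the characters, and bootstrapping from the structure of one function to the other if only a one-variable obstruction is available, I obtain $k_0,l_0\geq1$ and $\tau_1,\tau_2\in\mb{R}$ with $f^{k_0}$ pretending to $n^{i\tau_1}$ and $g^{l_0}$ to $n^{i\tau_2}$.

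\emph{Step 3 (upgrade to exact identities, and conclude).} Set $\tilde f:=f^{k_0}n^{-i\tau_1}$, so $\sum_p(1-\operatorname{Re}\tilde f(p))/p<\infty$. I plan to show $\tilde f$ is root-of-unity valued with finite support. If instead $T_{\tilde f}=\{p:\tilde f(p)\neq1\}$ were infinite, or finite but with an irrational value at some prime, I would use the fundamental lemma of sieve theory together with the coprimality of $n$ and $n+1$ to prescribe \emph{independently} the factorizations of $n$ at the primes relevant to $f$ and of $n+1$ at the primes relevant to $g$, producing, for arbitrary $u,v\in\mb{T}$, an $n$ with $f(n)$ near $u$ and $g(n+1)$ near $v$ --- contradicting non-density, \emph{unless} the relevant behaviour of $f$ and $g$ is concentrated at a single common prime $q$, i.e. $T_{f^m}=T_{g^m}=\{q\}$ for all large $m$, which is exactly what the last hypothesis forbids. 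Hence $f^{k_0 N}=n^{i\tau_1 N}$ identically for a common order $N$; writing $f(n)^{k_1}=n^{it_1'}$ with $k_1=k_0N$, $t_1'=\tau_1N$, density of $\{f(n)\}_n$ forces $t_1'\neq0$, and similarly $g(n)^{l_1}=n^{it_2'}$ with $t_2'\neq0$. If $t_1'/t_2'\notin\mb{Q}$, then Weyl's criterion for the archimedean factors, together with an equidistribution input for the bounded root-of-unity factors, makes $\{(f(n),g(n+1))\}_n$ dense --- a contradiction; so $t_1'/t_2'=p/q$ with $p,q$ nonzero integers, whence $f(n)^{k_1q}=n^{it_1'q}=n^{it_2'p}=g(n)^{l_1p}$ for all $n$, and the conclusion holds with $k=k_1q$, $l=l_1p$, $t=t_1'q$.

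\emph{The main obstacle.} I expect Step 3 --- the rigidity upgrade --- to be the crux. The analytic inputs above (Hal\'{a}sz's theorem, the binary-correlation classification, Weyl equidistribution) are by now standard; the real difficulty is converting the merely statistical statement ``$\tilde f(p)$ is close to $1$ for most $p$'' into the rigid ``$f(p)^{k_1}=p^{it_1'}$ for \emph{every} $p$''. This will require a quantitative sieve that manufactures integers $n$ realizing a prescribed joint local behaviour of $f(n)$ and $g(n+1)$ --- the point at which one genuinely exploits the decoupling of the additive shift $n\mapsto n+1$ from multiplicative structure --- together with a careful bookkeeping of which prime-support configurations survive, so as to carve out precisely the single-common-prime scenario that must be excluded by hypothesis and nothing more. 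Getting the sieve to work uniformly over all ``spread-out'' configurations, and over the root-of-unity twists that remain after dividing by $n^{i\tau_1}$, is where I expect most of the effort to go.
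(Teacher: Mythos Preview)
Your high-level architecture matches the paper's: reduce to pseudo-pretentious structure via the binary-correlation machinery (this is the paper's Proposition~\ref{REDUCE}), then upgrade ``pretends to $n^{i\tau}$'' to ``equals $n^{it}$'' (this is Proposition~\ref{EVRAT} and Theorem~\ref{IRRAT}), then force $t_1'/t_2'\in\mb{Q}$ (Proposition~\ref{RATRATIO}). Your identification of Step~3 as the crux is exactly right.

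Two small points on sufficiency. First, the preimage of a countable set under $(x,y)\mapsto x^k\bar y^{\,l}$ is a countable union of one-dimensional cosets, hence meager but not literally nowhere dense; what you actually need (and what follows from Baire) is that the \emph{closed} orbit closure, being contained in a meager $F_\sigma$, has empty interior and so cannot be all of $\mb{T}^2$. Second, to verify the $T_{f^m},T_{g^m}$ hypothesis cleanly, take $m$ running through multiples of $kl$, so that both $f^m$ and $g^m$ are archimedean characters and both $T_{f^m},T_{g^m}$ are cofinite.

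The genuine gap is in Step~3. Your plan---use the fundamental lemma to prescribe independently the small-prime factorizations of $n$ and $n+1$---is essentially how the paper handles the \emph{eventually rational} case (Proposition~\ref{EVRAT} and Lemma~\ref{EXT}): there the ``relevant primes'' form a finite set, the extra hypothesis on $T_{f^m},T_{g^m}$ supplies two \emph{distinct} small primes carrying irrational arguments, and CRT plus sieve does the rest. But in the \emph{irrational} case (no $k,N$ with $f(p)^k=1$ for all $p>N$), the set $T_{\tilde f}$ is infinite and the deviations $\tilde f(p)-1$ are individually tiny; you cannot ``prescribe factorizations at the relevant primes'' because there is no finite relevant set, and the fundamental lemma gives you nothing useful here. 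The paper's route is quite different: it runs a trichotomy on which of $|f(Bn)-a|$, $|g(Bn+1)-b|$ is large on a positive-density subset of a presieved level set $\mc{A}_{N,B,I}$; rules out the symmetric case via a prime-power construction (Lemma~\ref{PPOWER}) that simultaneously hits a target value of $f$ while keeping $n^{iu}$ in a short arc; and in the remaining cases reduces to showing $|F(n)\bar G(Bn+1)-1|\gg\e$ on a positive-density set, where $F,G$ are $1$-pretentious. The contradiction then comes not from a sieve construction but from a concentration estimate: one extracts (via Szemer\'edi) a long arithmetic progression inside the level set and invokes a second-moment bound (Lemma~\ref{SMALLDEV}/\ref{CHEB} and Lemma~2.13 of \cite{RIG}) to force $F(n)\bar G(Bn+1)=1+O(\e^3)$ on average there. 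This analytic endgame---comparing a pointwise lower bound with an $L^2$ concentration estimate on a Szemer\'edi progression---is the missing idea in your Step~3, and it is not something the fundamental lemma alone will deliver.
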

%In this paper we prove that with these repairs, K\'{a}tai's conjecture is indeed true.
%\begin{thm}\label{EKCFULL}
%Conjecture \ref{EKC} is true. \\
%Equivalently, suppose $a,b: \mb{N} \ra \mb{R}/\mb{Z}$ are completely additive functions with $|\tilde{T}_{ma}||\tilde{T}_{mb}| > 1$ or $\tilde{T}_{ma} \neq \tilde{T}_{mb}$. Then, unless $ka(n) =lb(n) = t \log n$ for some positive integers $k,l$ and a real number $t$, the sequence of pairs $\{(a(n),b(n+1))\}_n$ is dense in $(\mb{R}/\mb{Z})^2$.
%\end{thm}
In the case that $f = g$, we have the following immediate corollary.
\begin{cor}
Let $f: \mb{N} \ra \mb{T}$ be a completely multiplicative function such that $\overline{\{f(n)\}_n}=\overline{\{g(n)\}_n}=\mb{T}.$ Then $\overline{\{(f(n),f(n+1))\}_n}\ne \mb{T}^2$ if, and only if, one of the following conditions holds: \\
a) there is a positive integer $l$ such that $|T_{f^l}| = 1$, and for $p \in T_{f^l}$, $f(p)^l = e(\alpha)$ with $\alpha \notin \mb{Q}$; \\
b) there is a positive integer $k$ and a real number $t$ for which $f(n)^k = n^{it}$.
\end{cor}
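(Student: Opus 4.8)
\emph{Proof proposal.}
The plan is to obtain the corollary by specializing Theorem \ref{EKCFULL} to the case $g = f$, supplemented by two elementary computations for the ``only if'' direction. Throughout I would lean on the remark that, since $\overline{\{f(n)\}_n} = \mb{T}$, the function $f$ takes infinitely many values; hence $f^m \not\equiv 1$ for every $m \geq 1$ (equivalently $|T_{f^m}| \geq 1$ for all $m$), and any time one is led to a power $f^M$ that is identically $1$, a contradiction has been reached.

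First I would prove the ``if'' direction. If (b) holds, then necessarily $t \neq 0$ (else $f^k \equiv 1$), and $\left(f(n) f(n+1)^{-1}\right)^k = \left(n/(n+1)\right)^{it} \to 1$; hence every limit point $(x,y)$ of $\{(f(n),f(n+1))\}_n$ satisfies $\left(x y^{-1}\right)^k = 1$, so the orbit closure lies in a proper closed subset of $\mb{T}^2$. If (a) holds, then $f^l$ is completely multiplicative and equals $1$ off the powers of $p$, so $f(n)^l = e(\alpha\, v_p(n))$ where $v_p$ is the $p$-adic valuation; since $\gcd(n,n+1) = 1$, at most one of $v_p(n)$ and $v_p(n+1)$ is positive, and because $\alpha \notin \mb{Q}$ this forces $f(n)^l = 1$ or $f(n+1)^l = 1$ for every $n$. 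Thus the orbit closure is contained in $\{(x,y) \in \mb{T}^2 : x^l = 1\} \cup \{(x,y) \in \mb{T}^2 : y^l = 1\}$, once again a proper subset.

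For the converse, suppose $\overline{\{(f(n),f(n+1))\}_n} \neq \mb{T}^2$. Since $g = f$ gives $T_{f^m} = T_{g^m}$ for all $m$, the supplementary hypothesis of Theorem \ref{EKCFULL} reduces to ``$|T_{f^m}| \geq 2$ for infinitely many $m$''. If that holds, Theorem \ref{EKCFULL} supplies an integer $k \geq 1$ and $t \in \mb{R}$ with $f(n)^k = n^{it}$, which is exactly (b). Otherwise $|T_{f^m}| \leq 1$ for all large $m$, and combining with $|T_{f^m}| \geq 1$ we get $|T_{f^l}| = 1$ for all large $l$; fixing one such $l$, write $T_{f^l} = \{p\}$. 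If $f(p)^l$ were a root of unity of order $b$, then since $f(q)^l = 1$ for all $q \neq p$ we would have $f^{lb} \equiv 1$ --- impossible; hence $f(p)^l = e(\alpha)$ with $\alpha \notin \mb{Q}$, which is (a). (A short additional check, again using that $f$ has infinite image, shows (a) and (b) are mutually exclusive, consistent with the ``one of'' phrasing.) The substance of the argument is entirely in Theorem \ref{EKCFULL}; what remains is routine, and I foresee no real obstacle beyond keeping the degenerate branches systematically excluded.
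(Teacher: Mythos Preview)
Your proposal is correct and follows exactly the route the paper intends: the paper presents this statement as an ``immediate corollary'' of Theorem \ref{EKCFULL} with $g=f$ and gives no separate proof, and you have simply written out the routine specialization (the dichotomy on whether the supplementary hypothesis $|T_{f^m}|\geq 2$ for infinitely many $m$ holds) together with the elementary verifications for the ``if'' direction.
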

In order to facilitate our discussion, we distinguish completely multiplicative functions according to their values on primes as follows. 
%in which both $f$ and $g$ are such that $|T_{f^k}|,|T_{g^l}|$ are both finite for all large $k,l$ (the so-called \emph{eventually rational} case), and the converse case (which we call the \emph{irrational} case).
\begin{def1} \label{RATIRRAT}
A completely multiplicative function $f : \mb{N} \ra \mb{T}$ is said to be \emph{eventually rational} if there exist positive integers $k$ and $N_0 = N_0(k)$ such that for all $p \geq N_0$ we have $f(p)^k = 1$. We will say that $f$ is \emph{irrational} otherwise.
\end{def1}
%the values of both $f$ and $g$ at all ''large'' primes are assumed to all be roots of unity of bounded order; that is there exists a positive integer $m$ such that for all but finitely many primes $p$, $f(p)^m = g(p)^m = 1$. 
%Eventually rational functions are thus multiplicative functions taking values in a set of bounded order roots of unity from some point onwards along the set of primes.
%In the second case, we will assume that at least one of $f$ and $g$ is such that for all $m$ there exists a prime $p$ such that its value at $p$ is \emph{not} an $m$th order root of unity. To facilitate this distinction we introduce the following definition.
%The naming convention in Definition \ref{RATIRRAT} refers roughly to the rationality of the values of the additive function $a: \mb{N} \ra \mb{R}/\mb{Z}$ associated to $f$ via the expression $f(n) = e(a(n))$. We note that an eventually rational function $f$ may or may not yield a dense sequence $\{f(n)\}_n$. Indeed, it does so if, and only if, there is at least one prime $p_0$ for which $f(p_0) = e(\alpha)$, with $\alpha \notin \mb{Q}$. This fact will be crucial for us in what follows.\\
An irrational function necessarily produces a sequence $\{f(n)\}_n$ that is dense. We stress, though, that the arguments of an irrational function at primes \emph{need not be} irrational. For example, by our definition, the completely multiplicative function defined by $f(p) = e(1/p)$ for all $p$ is irrational. \\ \\
To prove Theorem \ref{EKCFULL}, we will treat two cases, depending on whether or not $f$ or $g$ is irrational (in the sense of Definition \ref{RATIRRAT}). In Section \ref{SECEVRAT} we prove the following.\begin{prop} \label{EVRAT}
Suppose $f,g: \mb{N} \ra \mb{T}$ are eventually rational, $\overline{\{f(n)\}_n}=\overline{\{g(n)\}_n}=\mb{T}$ and either $|T_{f^k}||T_{g^k}| > 1$ or $T_{f^k} \neq T_{g^k}$ for infinitely many $k$. Then $\overline{\{(f(n),g(n+1))\}_n}=\mb{T}^2.$ 
\end{prop}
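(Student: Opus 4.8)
The plan is to exploit the eventual rationality of $f$ and $g$ to reduce the density question modulo the relevant roots of unity, and then to force equidistribution by a Weyl-type argument driven by the two hypotheses on $T_{f^k}$ and $T_{g^k}$. First I would fix a large $k$ for which (by hypothesis) either $|T_{f^k}||T_{g^k}| > 1$ or $T_{f^k} \neq T_{g^k}$, and also large enough that $f(p)^{k!} = g(p)^{k!} = 1$ for all $p \geq N_0$ (passing to a common multiple of the two exponents from Definition \ref{RATIRRAT}). Write $m := k!$. Then $f^m$ and $g^m$ are supported on finitely many ``bad'' primes $p < N_0$ together with (possibly) a sparse exceptional set, and on the remaining primes $f^m \equiv g^m \equiv 1$. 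The point is that $f(n)$ and $g(n)$ each take values in a fixed finite cyclic group $\mu_m$ of $m$-th roots of unity \emph{up to} the contribution of the finitely many bad primes; since $\overline{\{f(n)\}_n}=\overline{\{g(n)\}_n}=\mb{T}$, those bad-prime contributions must themselves be dense, so at least one bad prime $p$ has $f(p)$ (resp. a bad prime $q$ has $g(q)$) of infinite order, i.e. an irrational rotation number.

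Next I would set up the relevant torus. Let $S_f$ be the finite set of bad primes of $f$ (those $p < N_0$ with $f(p)$ of infinite order; similarly $S_g$), and consider the exponent vectors $n = \prod_{p} p^{v_p(n)}$. Then $f(n) = \zeta(n)\prod_{p \in S_f} f(p)^{v_p(n)}$ where $\zeta(n) \in \mu_m$, and likewise for $g(n+1)$ relative to $S_g$. The closure $\overline{\{(f(n),g(n+1))\}_n}$ therefore sits inside (and I claim equals) a finite union of cosets of the closed subgroup of $\mb{T}^2$ generated by the rotation vectors coming from $S_f$ and $S_g$, provided we can realize enough independent combinations of the valuations $v_p(n)$ and $v_q(n+1)$ for $p \in S_f$, $q \in S_g$. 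To prove density it suffices to show the following two things: (a) the coprimality of $n$ and $n+1$ lets us prescribe, independently and with positive lower density, the $p$-adic valuations of $n$ for $p \in S_f$ and the $q$-adic valuations of $n+1$ for $q \in S_g$ — this is a routine sieve/CRT construction, choosing $n \equiv 0 \bmod p^{a_p}$, $n \not\equiv 0 \bmod p^{a_p+1}$ for $p\in S_f$ and $n \equiv -1 \bmod q^{b_q}$, $q \not\equiv -1 \bmod q^{b_q+1}$ for $q \in S_g$, all simultaneously since $S_f \cap S_g$ imposes no conflict once we note $p \nmid n$ forces nothing about $n+1$ and vice versa, with the genuinely shared primes handled because $p\mid n \Rightarrow p\nmid n+1$; and (b) by the density hypotheses, the subgroup generated by $\{(\alpha_p, 0) : p\in S_f\} \cup \{(0,\beta_q): q \in S_g\}$ together with $(1/m,0)$ and $(0,1/m)$ is all of $\mb{T}^2$ — but this could fail if, say, $S_f = S_g = \{p\}$ with $\alpha_p$ and $\beta_p$ rationally dependent. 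This last failure mode is \emph{exactly} the counterexample (ii) of the introduction, and it is excluded precisely by the hypothesis that for infinitely many $k$ one has $|T_{f^k}||T_{g^k}| > 1$ or $T_{f^k}\neq T_{g^k}$: either there are at least two bad primes in play (so two independent rotations on at least one coordinate, or one on each coordinate coming from distinct primes, which are automatically jointly equidistributed since the valuations can be chosen independently), or $S_f \neq S_g$, which again decouples the two coordinates.

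The main obstacle I anticipate is step (a) combined with controlling the $\mu_m$-part: one must show not merely that the rotation part equidistributes on the subtorus but that it does so \emph{jointly} with the finite root-of-unity factor $\zeta(n)$ — i.e. that along the arithmetic progressions forcing the prescribed valuations, the residual value $f(n)\prod_{p\in S_f}f(p)^{-v_p(n)} \in \mu_m$ still ranges over all of $\mu_m$ (and likewise for $g$), and does so without correlating with the rotation coordinates in a way that could shrink the closure. Concretely, after fixing the valuations at the bad primes, $n$ still has complete freedom in its remaining prime factorization, and since $\overline{\{f(n)\}_n}=\mb{T}$ is driven by those same bad primes one must instead produce the $\mu_m$-values from the ``good'' primes $p \geq N_0$ with $f(p) \neq 1$ a nontrivial $m$-th root of unity — there is at least one such unless $f^m$ is eventually trivial, and if $f^m \equiv 1$ identically then $f(n)^m$ is already a finite-order character supported on $S_f$ and the analysis simplifies. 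Handling this bookkeeping uniformly — that is, splitting on whether each of $f^m, g^m$ is eventually trivial and in each case exhibiting enough free primes to fill out both the $\mu_m$-factor and, via the bad primes, the rotation subtorus, then invoking a three-distances / Weyl equidistribution statement for the resulting system of simultaneous congruences — is where the real work lies; everything else is CRT and the observation that $\gcd(n,n+1)=1$.
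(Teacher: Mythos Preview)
Your structural decomposition is correct and matches the paper's: factor $f(n)$ into irrational rotations from the finitely many bad primes $S_f$ and a $\mu_m$-valued piece from the rest, likewise for $g(n+1)$, and use $\gcd(n,n+1)=1$ with CRT to prescribe the bad-prime valuations of $n$ and $n+1$ independently. The paper does exactly this (choosing $B = 2q_1q_2 p^a$ and forcing $(p')^b \mid Bn+1$ with $p \neq p'$), and your reading of the hypothesis---that for large $k$ the sets $T_{f^k}$, $T_{g^k}$ stabilize to $S_f$, $S_g$, excluding only the common-singleton case---is also right.

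The gap is in what you call ``bookkeeping.'' Controlling the $\mu_m$-parts of $f(n)$ and $g(n+1)$ \emph{jointly} is not bookkeeping: it is a binary correlation statement for bounded multiplicative functions at consecutive arguments, and neither CRT, sieve, nor Weyl equidistribution (which concerns polynomial, not multiplicative, sequences) gives it to you. Once you fix the bad-prime valuations, you still need the pair $(\zeta_f(n),\zeta_g(n+1)) \in \mu_m^2$ to attain a \emph{specified} value on the resulting progression; pigeonhole hands you \emph{some} attained value, but that value may depend on the chosen exponents $(a,b)$, and ``adjusting $\zeta_f(n)$ via a good prime'' multiplies $n$ and thereby destroys all control over $g(n+1)$. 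The paper supplies this missing input via Tao's theorem on logarithmic two-point correlations (Theorem~\ref{TAOBINTHM}), used twice: first in Proposition~\ref{REDUCE} to force $f,g$ to be pseudo-pretentious (so $f \approx h_1$, $g \approx h_2$ for pseudocharacters, with $t=t'=0$ by Lemma~\ref{EQUIV}, and then $f(n)=h_1(n)$, $g(Bn+1)=h_2(Bn+1)$ pointwise on most rough $n$ by Lemma~\ref{CHEB}); and second in Lemma~\ref{EXT}, which shows that $\{n : h_1(n)=h_2(n+1)=1 \text{ and the prescribed valuations hold}\}$ has positive logarithmic density. That second step is precisely the joint $\mu_m$-control you are missing, and it genuinely requires the correlation input.
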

Proposition \ref{EVRAT} asserts that the only cases covered by Conjecture \ref{EKCFULL} are those for which either $f$ or $g$ is irrational. In this direction we prove the following in Section \ref{SECIRRAT}.
%We postpone the proof of the conjecture with this additional reduction to the next section. 
\begin{thm} \label{IRRAT}
Suppose that $f,g: \mb{N} \ra \mb{T}$ are completely multiplicative functions such that $\overline{\{f(n)\}_n}=\overline{\{g(n)\}_n}=\mb{T}.$ Suppose furthermore that at least one of $f$ and $g$ is irrational. If $\overline{\{(f(n),g(n+1))\}_n}\ne \mb{T}^2$ then there are positive integers $k,l$ and real numbers $t,t'$ such that $f(n)^k = n^{it}$ and $g(n)^l = n^{it'}$.
\end{thm}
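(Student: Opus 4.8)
The plan is to exploit the non-density of $\overline{\{(f(n),g(n+1))\}_n}$ by extracting, via the pigeonhole principle on a small ball avoided by the orbit, a nontrivial correlation of the form $f(n)^a g(n+1)^b$ being far from $1$ on a positive-density set of $n$; more precisely, one obtains a nonzero integer vector $(a,b)$ and a continuous test function on $\mb{T}^2$ supported in the complement of the orbit closure whose average against $(f(n),g(n+1))$ is bounded away from the ``expected'' value. I would first reduce to showing that the multiplicative function $n \mapsto f(n)^a g(n+1)^b$ has a nonzero mean value along some arithmetic-progression or logarithmically-weighted average; the natural tool here is the structural results on correlations of multiplicative functions flowing from Matom\"aki--Radziwi\l\l\ and especially Tao's logarithmic Elliott-type theorem, which forces that if $\sum_{n\le x} f(n)^a g(n+1)^b / n$ does not tend to $0$ then $f^a$ must ``pretend to be'' a twisted Dirichlet character $n \mapsto \chi(n) n^{i\tau}$, i.e.\ $f^a$ pretends to be $n^{i\tau}$ up to a rational character. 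This is where the irrationality hypothesis enters decisively: if $f$ is irrational, then $f^a$ cannot be eventually rational for \emph{any} nonzero $a$, so the only surviving possibility compatible with $f^a$ pretending to $\chi(n)n^{i\tau}$ will have to be upgraded from ``pretends to'' to an exact identity.

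Concretely, the key steps in order are: (1) from $\overline{\{(f(n),g(n+1))\}_n}\ne\mb{T}^2$, produce a nonzero $(a,b)\in\mb{Z}^2$ such that the logarithmically averaged correlation $\frac{1}{\log x}\sum_{n\le x} f(n)^a g(n+1)^b / n$ does not tend to $0$ (a Weyl/equidistribution argument: if all such correlations vanished, the pairs would equidistribute in $\mb{T}^2$); (2) invoke the two-point logarithmic Elliott correlation theorem to conclude that $f^a$ pretends to $\chi_1(n)n^{i\tau_1}$ and $g^b$ pretends to $\chi_2(n)n^{i\tau_2}$ for Dirichlet characters $\chi_1,\chi_2$ and reals $\tau_1,\tau_2$, with moreover a compatibility forcing $\chi_1\chi_2$-type relations and $\tau_1+\tau_2$ linked to the frequency at which the ball is approached; (3) use that at least one of $f,g$ is irrational, say $f$, to argue that $a$ can be taken with $f^a$ not eventually rational, whence the pretentious distance $\mb{D}(f^a,\chi_1 n^{i\tau_1};\infty)<\infty$ combined with the density hypothesis $\overline{\{f(n)\}_n}=\mb{T}$ forces the character to be principal and, by a classical rigidity argument (the distance is finite and $f^a$ is completely multiplicative and $\mb{T}$-valued), that $f(p)^a = p^{i\tau_1}$ for \emph{all} primes $p$, i.e.\ $f(n)^a = n^{i\tau_1}$ identically; (4) feed this back to deduce the analogous exact identity for $g$, adjusting exponents to clear denominators and obtain honest positive integers $k,l$ and reals $t,t'$ with $f(n)^k=n^{it}$, $g(n)^l=n^{it'}$.

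The main obstacle I anticipate is step (3): upgrading the \emph{analytic} statement ``$f^a$ pretends to $n^{i\tau}$ (finite pretentious distance)'' to the \emph{algebraic} identity ``$f(p)^a=p^{i\tau}$ for every prime $p$.'' Finite pretentious distance only controls $f(p)^a$ on average over primes, so a priori $f(p)^a$ could disagree with $p^{i\tau}$ on a sparse set of primes; ruling this out requires using the \emph{full} hypothesis $\overline{\{f(n)\}_n}=\mb{T}$ together with the structure of the avoided set. The natural mechanism is: if $f(p_0)^a \ne p_0^{i\tau}$ for some prime $p_0$, then by varying the exponent of $p_0$ in $n$ one can perturb the pair $(f(n),g(n+1))$ freely in the first coordinate along $n$ supported on powers of $p_0$ times a fixed modulus, while keeping $g(n+1)$ in a controlled range, contradicting non-density --- this is essentially the same ``free prime'' maneuver used to handle counterexample family (ii), and making it rigorous in the presence of the error terms from the correlation theorem is the delicate point. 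A secondary technical nuisance is matching up the various exponents $a,b,k,l$ and the characters so that the final identities hold with the \emph{same} or appropriately related integer exponents; this is bookkeeping but must be done carefully since the problem's conclusion demands exact (not merely pretentious) equality.
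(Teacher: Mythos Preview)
Your steps (1) and (2) match the paper's Proposition \ref{REDUCE} closely: non-density $\Rightarrow$ some logarithmic correlation $\sum_n f(n)^a g(n+1)^b/n$ is large $\Rightarrow$ by Tao's theorem both $f$ and $g$ are pseudo-pretentious, i.e.\ $\mb{D}(f,h_1 n^{it_1};\infty)<\infty$ and $\mb{D}(g,h_2 n^{it_2};\infty)<\infty$ with $h_1,h_2$ pseudocharacters. So far, so good.

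The genuine gap is in step (3), and it is precisely where you flagged the obstacle. Your proposed ``free prime maneuver'' --- pick a prime $p_0$ with $f(p_0)^a\neq p_0^{i\tau}$, vary the exponent of $p_0$ to move $f(n)$ freely while holding $g(n+1)$ fixed --- does not work as stated, for two reasons. First, the ratio $f(p_0)^a p_0^{-i\tau}$ could be a root of unity rather than have irrational argument, in which case powers of $p_0$ do \emph{not} move $f(n)$ densely; you need a separate argument (the paper's Lemma \ref{PPOWER}) to cover this case, distinguishing bounded versus unbounded order of these roots of unity across primes. Second, and more seriously, ``keeping $g(n+1)$ in a controlled range'' is not free: pseudo-pretentiousness of $g$ only says $g(n)\approx h_2(n)n^{it_2}$ for \emph{most} $n$, not on the specific sparse sequence $n=p_0^m n_0$ you want to run along. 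To get pointwise control you need a concentration estimate (the paper's Lemma \ref{SMALLDEV}/\ref{CHEB}) together with a level-set sieve (Proposition \ref{ONES}) ensuring that the set of $n$ with $h_1(n)=h_2(Bn+1)=1$ and $n^{i(t_1-t_2)}$ in a prescribed arc has positive density. None of this is in your outline. Your claim that density of $\{f(n)\}$ ``forces the character to be principal'' is also incorrect: the paper carries the pseudocharacters $h_1,h_2$ throughout and only neutralizes them by restricting to their level sets.

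The paper's route through step (3) is in fact rather different from a direct upgrade. It does not argue prime-by-prime. Instead it assumes the strong negation --- for every $k\in\mb{N}$ and $t\in\mb{R}$ there are infinitely many primes with $f(p)^k\neq p^{it}$ --- and from this constructs (via Lemma \ref{PPOWER}) a specific dilation $B$ so that $f(B)B^{-it_2}$ hits a prescribed target. After sieving onto the level set $T=\mc{A}_{N,B,I}(h_1,h_2;1,1;t_1-t_2)$ and splitting into cases according to which coordinate carries the $\e$-gap, the problem reduces to showing $|F(n)\bar{G}(Bn+1)-1|\ge \e/4$ on a positive upper density subset of $T$, where $F=f\bar{h_1}n^{-it_1}$ and $G=g\bar{h_2}n^{-it_2}$ are both $1$-pretentious. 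The contradiction then comes from extracting (by Szemer\'edi) a long arithmetic progression inside this subset and applying a Tur\'an--Kubilius-type concentration estimate, which forces $F(n)\bar{G}(Bn+1)$ to be within $O(\e^3)$ of $1$ along that progression. The cases where $f(p)^k=p^{it}$ holds for all but finitely many primes are handled separately in Section \ref{DEGENCASES} by arguments closer in spirit to your free-prime maneuver, but even there the sieving and concentration machinery is essential.
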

Having shown Theorem \ref{IRRAT}, in order to prove Theorem \ref{EKCFULL} it remains to prove that the real numbers $t$ and $t'$ satisfy $t = \lambda t'$, for $\lambda \in \mb{Q}$. This is the conclusion
%\footnote{It seems likely that an algebraic proof of this result should exist. Such a result should follow from the fact that sequences of powers $(z^r,w^r)$ are only non-dense provided that $z = w^{a/b}$ for some $a,b\in \mb{Z}$, a result that can be deduced from Goursat's lemma (classifying the subgroups of $\mb{T}^2$) and basic complex function theory. Unfortunately, we were unable to find this proof here.}  
of Proposition \ref{RATRATIO}, which we prove in Section \ref{SECRATRATIO}.\\
The proof of Theorem \ref{IRRAT} uses arguments that extend the one used in the proof of Theorems 1.1 and 1.4 in \cite{RIG}. Let us recall the rough outline of this argument here. \\
In Theorem 1.4 of \cite{RIG} (the proof of which establishes Theorem 1.1 there as well), it was shown that the sequence $\{f(n)\bar{f(n+1)}\}_n$ is dense in $\mb{T}$, except in predictable cases\footnote{That is, except when $f(n) = g(n)n^{it}$, where $g$ is a function taking values in bounded order roots of unity.}. The objective was to show that for every $\e > 0$ and every $z \in \mb{T}$, the lower bound
\begin{equation}\label{GOODCOND}
|f(n)\bar{f(n+1)}-z| \geq \e \text{ for all $n$ sufficiently large}
\end{equation}
cannot hold for ``generic'' completely multiplicative functions $f: \mb{N} \ra \mb{T}$. To establish this, we noted that \eqref{GOODCOND} implies that the sequence $\{f(n)\bar{f(n+1)}\}_n$ cannot be equidistributed, which led us (via the Erd\H{o}s-Tur\'{a}n inequality and Tao's theorem on logarithmic averages of binary correlations \cite{Tao}; see Theorem \ref{TAOBINTHM} below) to a first conclusion that $f$ is \emph{pseudo-pretentious}, i.e., such that for some (minimal) $k \in \mb{N}$, some Dirichlet character $\chi$ and some $t \in \mb{R}$ (depending at most on $\e$), we have
$$\mb{D}(f^k,\chi n^{it};x)^2 := \sum_{p \leq x} \frac{1-\text{Re}(f(p)^k \bar{\chi}(p)p^{-it})}{p} \ll_{\e} 1;$$
in particular, $\mb{D}(f,gn^{it};x) \ll_{\e} 1$ for some completely multiplicative function $g$ such that $g(n)^k = \chi(n)$ whenever $\chi(n) \neq 0$. Here the fact that $(n+1)^{it} \approx n^{it}$ for large $n$ is used crucially. Roughly speaking, we then conditioned on a suitable subset of $n$ (or positive logarithmic density) such that $g(n)\bar{g(n+1)}$ is constant, and thereby reduced our work to treating the 1-pretentious function $F = f\bar{g}n^{-it}$, showing that for small enough $\e$,
$$|F(n)\bar{F(n+1)}-z'| = |f(n)\bar{f(n+1)}-z| + O(\e^2) \gg \e$$ for some $z'$ (where $z$ and $z'$ need not be the same) is untenable for all $n$ sufficiently large (depending at most on $\e$).  This reduction was a key part that made that argument work. \\
In the context of Theorem \ref{EKCFULL}, we must face several key differences in the argument. For example: \\
\begin{enumerate}[(i)]
\item the fact that $\|(f(n),g(n+1))-(z,w)\|_{\ell_1} \geq \e$ for all large $n$ may mean that $|f(n)-z| \geq \e/2$ on a very dense set, or a very sparse set, and we cannot exclude either of these possibilities;\\
\item if $f$ and $g$ are both pseudo-pretentious in the above sense, say $f$ is pretentious to $h_1n^{it}$ and $g$ is pretentious to $h_2n^{it'}$  then it may be that $t \neq t'$, and we must then deal with the distribution in argument of the twist $n^{i(t-t')}$, unlike in the outline of the proof of Theorem 1.4 of \cite{RIG}. 
\end{enumerate}

%This is a strictly multidimensional issue, not evident in the proof of Theorem \ref{KSCONJ}. \\ 
%in assuming for the sake of contradiction that Theorem \ref{IRRAT} is false, we produce a condition of the form $|f(n)-z| + |g(n+1)-w| \geq \e$ for all large $n$. This only yields lower bounds for either of $|f(n)-z|$ or $|g(n+1)-w|$ on a (possibly proper) subset of $\mb{N}$, and moreover the relationship between this condition and correlations is not \emph{a priori} clear.
With some additional ideas, we are able to address these issues. See especially Section \ref{SECIRRAT} for further details.
%\noindent We will actually give two proofs of this result, one analytic and one essentially algebraic. 
%It follows from the results of Section 3 that if $f$ and $g$ are both eventually rational and satisfy the hypotheses of Conjecture \ref{EKC} then $\{(f(n),g(n+1))\}_n$ is necessarily dense. 
%In particular, Theorem \ref{IRRAT} implies Conjecture \ref{EKC}.\\
%\subsection*{Notation} 
%In what follows, given a Dirichlet character $\chi$ modulo $q$, define $\tilde{\chi}$ to be the unimodular multiplicative function such that $\tilde{\chi}(n) = \chi(n)$ if $(n,q) = 1$, and $\tilde{\chi}(n) = 1$ otherwise. \\
%As usual, we will write $P^-(n)$ to denote the smallest prime factor of $n$. We also write $P_N := \prod_{p \leq N} p$, where $N \in \mb{N}$, and $\Phi(x,N)$ to denote the number of integers $n \leq x$ for which $P^-(n) > N$. \\
%Given $a,b \in \mb{N}$ with $b$ squarefree we will write $(a,b^{\infty}) := \prod_{p|a} p^{\nu_p(b)}$, where $\nu_p$ is the $p$-adic valuation on $\mb{Q}$. \\
%Finally, for a given set of real numbers $A$ we will write $\lambda \cdot A := \{\lambda a : a \in A\}$, where $\lambda \in \mb{R}$.
\section*{Ackhowledgments}
The authors are grateful to Andrew Granville, John Friedlander  and Imre K\'atai for their interest in the results of the present paper.
The bulk of this work was completed while the second author was a Ph.D student at the University of Toronto. He would like to thank U of T for excellent working conditions while this project was being completed.
\section{Auxiliary Results towards Theorem \ref{EKCFULL}} \label{SECAUX}
 In this section we collect the definitions and lemmata that we shall use in the proof of Theorem \ref{EKCFULL}.\\
 A crucial result on which our method relies is the following recent breakthrough result of Tao \cite{Tao}.
 \begin{thm}[Tao] \label{TAOBINTHM}
 Let $f_1,f_2 : \mb{N} \ra \mb{U}$ be multiplicative functions, such that for some $j \in \{1,2\}$, we have 
$$\inf_{|t| \leq x} \mb{D}(f_j,\chi n^{it};x)^2 = \inf_{|t| \leq X} \sum_{p \leq X} \frac{1-\text{Re}(f_j(p)\bar{\chi}(p)p^{-it})}{p} \ra \infty$$ 
for each fixed Dirichlet character $\chi$. Then for any integers $a_1,a_2 \geq 1$ and $b_1,b_2 \geq 0$ with $a_1b_2 - a_2b_1 \neq 0$, we have
 \begin{equation*}
\frac{1}{\log x} \sum_{n \leq x} \frac{f_1(a_1n+b_1)f_2(a_2n+b_2)}{n} = o(1).
 \end{equation*}
 \end{thm}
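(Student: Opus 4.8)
The plan is to reconstruct Tao's argument from \cite{Tao}. Throughout I take $j=1$, write $g_1:=f_1$ for the non-pretentious factor and $g_2:=f_2$ (replacing $f_2$ by $\bar{f_2}$ costs nothing), and --- reducing the case of general linear forms to the model case by routine manipulations that use $a_1b_2\ne a_2b_1$ to ensure the two arguments are not proportional --- I concentrate on proving
\[\frac{1}{\log x}\sum_{n\le x}\frac{g_1(n)g_2(n+1)}{n}=o(1).\]
I argue by contradiction: suppose the left-hand side, call it $S=S(x)$, has absolute value $\ge\delta$ for arbitrarily large $x$.

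The engine of the proof is multiplicativity: $g_i(pn)=g_i(p)g_i(n)$ whenever $p\nmid n$, so the logarithmic correlation is almost invariant under the dilation $n\mapsto pn$, which morally should force $g_1(p)g_2(p)\approx 1$ for many primes $p$. The obstruction is the additive shift, since $g_2(pn+1)$ does not factor. To get around this I would introduce a slowly growing averaging length $H$ and a dyadic prime window $(P,2P]$, use that the weight $1/n$ is nearly constant on length-$H$ intervals to replace $g_1(n)g_2(n+1)$ by the average of $g_1(n+h)g_2(n+h+1)$ over $h\le H$, and then insert $p\,\mathbf{1}_{p\mid n+h}$ (whose average over $n,h,p$ is $\approx 1$). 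This insertion is harmless precisely when the divisibility data --- which $h\le H$ are divisible by which $p\in(P,2P]$ --- is nearly independent of the correlation pattern $\big(g_1(n+h)g_2(n+h+1)\big)_{h\le H}$, and Tao's entropy decrement argument shows that the relevant conditional mutual information is summable over scales, hence negligible at some scale $H=H(x)\ra\infty$. With the indicator in place I would unfold $g_i(p\,\cdot)=g_i(p)g_i(\cdot)$, turning $S$ into an average over $p\sim P$ of short correlations of $g_1$ and $g_2$.

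After a Fourier expansion, the surviving main term would force the short averages $\tfrac1H\sum_{h\le H}g_1(n+h)\,e(\alpha_n h)$, for a suitable phase $\alpha_n$, to be bounded below in modulus for a set of $n$ of positive logarithmic density. The Matom{\"a}ki--Radziwi{\l}{\l} theorem \cite{MaR}, together with the Granville--Soundararajan theory of pretentious multiplicative functions (needed to absorb the archimedean twist $n^{it}$ and to keep estimates uniform over $|t|\le x$), rules this out unless $\mb{D}(g_1,\chi n^{it};x)^2\ll 1$ for some Dirichlet character $\chi$ and some $|t|\le x$ --- contradicting the hypothesis on $g_1$. This contradiction gives $S=o(1)$.

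The main obstacle is the entropy decrement step: locating a scale $H$ at which the multiplicative (divisibility) and additive (shift) data decouple, and rigorously justifying the insertion of $p\,\mathbf{1}_{p\mid n+h}$ together with the unfolding of multiplicativity with total loss only $o(1)$. Subsidiary but still substantial difficulties are the reduction from general linear forms $a_in+b_i$, where $a_1b_2\ne a_2b_1$ is exactly what prevents the two forms from being proportional, and the need to keep every pretentious-distance estimate uniform over $|t|\le x$, which is why the hypothesis is phrased with $\inf_{|t|\le x}$.
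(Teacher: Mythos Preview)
The paper does not prove this theorem at all: it is quoted verbatim as ``the following recent breakthrough result of Tao'' and cited to \cite{Tao}, with no argument given. There is therefore no proof in the paper to compare your attempt against.

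Your sketch is a reasonable high-level outline of Tao's actual argument in \cite{Tao}: the approximate dilation-invariance of logarithmically weighted correlations under $n\mapsto pn$, the entropy decrement argument to locate a scale $H$ at which one may insert $p\,\mathbf{1}_{p\mid n+h}$ with negligible loss, and the appeal to Matom{\"a}ki--Radziwi{\l}{\l}-type input to control the resulting short sums and force pretentiousness. As a sketch it is honest about where the real work lies (the entropy step and the uniform treatment of the archimedean twist). That said, it remains only a sketch: the passage ``after a Fourier expansion, the surviving main term would force the short averages $\tfrac{1}{H}\sum_{h\le H}g_1(n+h)e(\alpha_n h)$ to be bounded below'' compresses a substantial circle-method argument, and the handling of the non-factoring shift $g_2(pn+1)$ after unfolding is not spelled out. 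If your goal is merely to use the theorem as the present paper does, no proof is needed; if you intend to supply one, considerably more detail is required at those two junctures.
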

A useful consequence of Theorem \ref{TAOBINTHM} is the following.
\begin{prop} \label{ULTPRETLEM}
Let $f,g : \mb{N} \ra \mb{U}$ be multiplicative functions. Suppose $k,l \in \mb{N}$ are minimal, such that
\begin{equation}\label{LARGEBINCORHYP}
\left|\frac{1}{\log x} \sum_{n \leq x} \frac{f(n)^k g(n+1)^l}{n}\right| \gg 1,
\end{equation}
as $x \ra \infty$. Then there are Dirichlet characters $\chi_1,\chi_2$ with respective conductors $q_1,q_2 = O(1)$, and real numbers $t_1,t_2 = O(1)$ such that $\mb{D}(f,h_1n^{it_1},\infty)<\infty$ and  $\mb{D}(f,h_2n^{it_2},\infty)<\infty$, with\footnote{Given $a,b \in \mb{N}$, we write $(a,b^{\infty}) := \prod_{p|b}p^{\nu_p(a)}$.} $h_1(n)^k = \chi_1(n/(n,q_1^{\infty}))$ and $h_2(n)^l = \chi_2(n/(n,q_2^{\infty}))$.
\end{prop}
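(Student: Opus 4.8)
The plan is to deduce Proposition~\ref{ULTPRETLEM} from Tao's Theorem~\ref{TAOBINTHM} by a contrapositive argument applied to the pair $(f^k, g^l)$. First I would observe that the hypothesis \eqref{LARGEBINCORHYP} says precisely that
\[
\left|\frac{1}{\log x}\sum_{n\le x}\frac{f(n)^k\, g(n+1)^l}{n}\right|\gg 1
\]
does \emph{not} tend to $0$; since $n\mapsto f(n)^k$ and $n\mapsto g(n)^l$ are multiplicative functions into $\mb U$, and $a_1 b_2 - a_2 b_1 = 1\cdot 1 - 1\cdot 0 = 1\neq 0$ for the linear forms $n$ and $n+1$, Theorem~\ref{TAOBINTHM} applies with $f_1 = f^k$, $f_2 = g^l$. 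Its conclusion would force the displayed average to be $o(1)$, contradicting \eqref{LARGEBINCORHYP}. Hence the hypothesis of Theorem~\ref{TAOBINTHM} must fail for \emph{both} $j=1$ and $j=2$: that is, for $j\in\{1,2\}$ there is a Dirichlet character (call them $\chi_1$ for $f^k$ and $\chi_2$ for $g^l$) and the infimum over $|t|\le x$ of $\mb D(f^k,\chi_1 n^{it};x)^2$ stays bounded along a sequence $x\to\infty$; similarly for $g^l$ and $\chi_2$.

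Next I would promote this ``bounded along a sequence'' statement to genuine finiteness of a pretentious distance and to boundedness of the relevant parameters. This is the standard pretentious-distance compactness step: if $\inf_{|t|\le x}\mb D(f^k,\chi n^{it};x)$ is bounded along $x\to\infty$ for a \emph{fixed} character $\chi$, then by the triangle inequality for $\mb D$ together with the fact that $\mb D(n^{it},n^{it'};x)\to\infty$ whenever $t\neq t'$ (so the near-minimising $t=t(x)$ cannot escape to infinity or oscillate without penalty), one extracts a single real number $t_1 = O(1)$ and a single character $\chi_1$ of bounded conductor $q_1 = O(1)$ with $\mb D(f^k,\chi_1 n^{it_1};\infty)<\infty$. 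The conductor is $O(1)$ because only finitely many characters can be pretentious to a given function up to twisting by $n^{is}$; again this follows from the triangle inequality for $\mb D$ and the divergence of $\mb D(\chi,\chi';x)$ for distinct primitive characters. The same argument handles $g^l$, producing $\chi_2$, $q_2$, $t_2$.

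Finally I would pass from the statement about $f^k$ to one about $f$ itself, extracting the functions $h_1, h_2$. Given $\mb D(f^k,\chi_1 n^{it_1};\infty)<\infty$, I want a completely multiplicative $h_1:\mb N\to\mb T$ with $h_1(n)^k = \chi_1(n/(n,q_1^\infty))$ and $\mb D(f,h_1 n^{it_1/k};\infty)<\infty$; writing $t_1$ for $t_1/k$ after relabelling gives the stated form. The point is that on each prime $p\nmid q_1$ one has $f(p)^k$ close to $\chi_1(p) p^{it_1}$ on average weighted by $1/p$, and one chooses $h_1(p)$ to be a $k$-th root of $\chi_1(p)$ (for $p\nmid q_1$, setting $h_1(p)$ arbitrarily on $p\mid q_1$, say $h_1(p)=1$) such that $f(p)$ is close to $h_1(p)p^{it_1/k}$; one must check that such a choice of root can be made \emph{consistently} so that the resulting $\mb D$-sum converges. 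This uses that $|f(p)^k - \chi_1(p)p^{it_1}|$ small forces $f(p)$ to lie near one of the $k$ roots, and that $1 - \mathrm{Re}(z^k)$ controls $\min_j(1-\mathrm{Re}(z\zeta_k^j))$ up to a factor $k^2$, so $\sum_p (1-\mathrm{Re}(f(p)\bar h_1(p)p^{-it_1/k}))/p \ll_k \sum_p(1-\mathrm{Re}(f(p)^k\bar\chi_1(p)p^{-it_1}))/p < \infty$ after the root is chosen prime-by-prime to minimise each summand. I expect the main obstacle to be precisely this last packaging step: making the selection of $k$-th roots at each prime and verifying that the resulting $h_1$ (completely multiplicative, $\mb T$-valued, with $h_1^k$ equal to the indicated character on the coprime part) simultaneously satisfies the pretentiousness bound — in particular handling the ramified primes $p\mid q_1$ and the slight mismatch between $\chi_1(n)$ and $\chi_1(n/(n,q_1^\infty))$ — without losing control of the conductor or of $t_1$. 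The extraction from Tao's theorem and the compactness of the $\mb D$-parameters are comparatively routine.
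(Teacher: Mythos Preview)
Your approach is essentially the same as the paper's: apply Tao's theorem in contrapositive to $(f^k,g^l)$, use a compactness argument for the pretentious parameters, then extract $k$th roots prime-by-prime. The paper simply outsources the last two steps to Lemma~2.5 and Proposition~2.8 of \cite{RIG}, which carry out precisely the arguments you sketch (stabilising $(\chi_x,t_x)$ to a single $(\chi_1,t_1)$ via the triangle inequality for $\mb D$, and then choosing $h_1(p)$ as the nearest $k$th root of $\chi_1(p)$ so that $\mb D(f,h_1 n^{it_1};\infty)\ll_k \mb D(f^k,\chi_1 n^{ikt_1};\infty)$); your identification of the root-extraction packaging as the main technical content is accurate.
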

\begin{proof}
%We shall only prove the conclusion about $f$; the corresponding statement about $g$ is proved the same way. \\
From \eqref{LARGEBINCORHYP} and Theorem \ref{TAOBINTHM}, it follows that for each $x$ sufficiently large there is a pair $(\xi_x,t_x)$, where $\xi_x$ is a primitive Dirichlet character with $\text{cond}(\xi_x) \ll 1$ and $t_x \ll 1$, each estimate being uniform in $x$, such that $\mb{D}(f^k,\xi_xn^{it_x};x) \ll 1$. By Lemma 2.5 of \cite{RIG}, it follows that there is a character $\chi_1$ and a $t_1 \in \mb{R}$ such that $\mb{D}(f^k,\chi_1n^{ikt_1};x) \ll 1$. Combining this with Proposition 2.8 of \cite{RIG}, it follows that there is a completely multiplicative function $h_1$ such that for all primes $p \nmid \text{cond}(\chi_1)$ we have $h_1(p)^k = \chi_1(p)$, while if $p|\text{cond}(\chi_1)$, $h_1(p) = 1$. This implies the claim about $f$. The claim about $g$ follows in the same way.
\end{proof}
  %and with each $\xi_j$ having conductor
\subsection{Presieving on Level Sets With Archimedean Twists}
%For convenience we introduce the following definition, which describes a notion used in Section \ref{RIGSEC}.
Recall that a 1-bounded multiplicative function $f$ is called \emph{pseudo-pretentious}
% \begin{def1}
% Let $f : \mb{N} \ra \mb{U}$ be multiplicative. We say that $f$ is \emph{pseudo-pretentious} 
if there exists a (minimal) positive integer $k$, a primitive Dirichlet character $\chi$ modulo $q$, a real number $t$ and a completely multiplicative function $h : \mb{N} \ra \mb{T}$ such that $h(n)^k = \tilde{\chi}(n)$, and $\mb{D}(f,h n^{it};x) \ll 1$. Here, $\tilde{\chi}$ is the unimodular completely multiplicative function defined on primes via $\tilde{\chi}(p) = \chi(p)$ if $p \nmid q$, and $\tilde{\chi}(p) = 1$ otherwise. Henceforth, we will refer to the function $h$ here as a \emph{pseudocharacter}, and refer to the modulus $q$ of $\chi$ as the \emph{conductor} of $h$.
%\end{def1}
We emphasize that in this definition the minimality of $k$ implies that $h(n)^j$ is non-pretentious for all $1 \leq j \leq k-1.$ This will be crucial in several of the arguments below. \\
%n this and the next subsection, we will show that if $\{(f(n),g(n+1))\}_n$ is not dense then both $f$ and $g$ are pseudo-pretentious to respective functions $h_1 n^{it}$ and $h_2n^{it'}$. In proving Theorem \ref{EKCFULL}, we will find it convenient to control the discrete oscillation contributed by $h_1$ and $h_2$, as discussed in the case $f = g$ in the discussion of the results of \cite{RIG} given at the end of the last section. Simultaneously it will be important for us to mitigate the possible difference between the twist $n^{it}$ associated to $f$ and $n^{it'}$ associated to $g$. This additional input sets the results of this subsection apart from Lemma 2.12 in \cite{RIG}, which is the prototype for Proposition \ref{ONES} below. \\Let $x$ be large. In the remainder of this subsection, we fix primitive Dirichlet characters $\chi_1,\chi_2$ with respective conductors $q_1$ and $q_2$, taking values in $k-$th and $\l-$th roots of unity respectively. Writing $\tilde{\chi}_j$ as above, let $h_1,h_2 : \mb{N} \ra \mb{T}$ be completely multiplicative functions, such that $h_1^r = \tilde{\chi}_1$ and $h_2^s = \tilde{\chi}_2$.\\
Given $\alpha_j \in h_j^{-1}(\mb{N})$ for $j = 1,2$ and $N,B \geq 1$, write
\begin{equation*}
\mc{A}_{N,B}(h_1,h_2;\alpha_1,\alpha_2) := \{n \in \mb{N} : P^-(n(Bn+1)) > N, h_1(n) = \alpha_1, h_2(Bn+1) = \alpha_2\}.
\end{equation*}
Furthermore, if $I,J \subseteq \mb{T}$ are arcs\footnote{By an \emph{arc} in $\mb{T}$, we mean the image of an interval $[a,b] \subseteq \mb{R}$ under the exponentiation map $t \mapsto e(t)$. Thus, a symmetric arc about 1, for example, refers to the image under exponentiation of any interval $[m-\eta,m+\eta]$ with $m \in \mb{Z}$.} and $u,v \in \mb{R}$ then
\begin{align*}
&\mc{A}_{N,B,I}(h_1,h_2;\alpha_1,\alpha_2;u) := \{n \in \mc{A}_{N,B}(h_1,h_2;\alpha_1,\alpha_2) : n^{iu} \in I\} \\
&\mc{A}_{N,B,I,J}(h_1,h_2;\alpha_1,\alpha_2;u,v) := \{n \in \mc{A}_{N,B}(h_1,h_2;\alpha_1,\alpha_2) : n^{iu} \in I, n^{iv} \in J\}.
\end{align*}
%analogy to Theorem 1.1 of Section \ref{RIGSEC}, we will exploit this pseudo-pretentiousness in order to prove Theorem \ref{EKCFULL}. It will be convenient to control the discrete 	oscillation implied We will use the following two dimensional extension of Lemma 2.9 in Section \ref{RIGSEC} (or Lemma \ref{EXT} in the special case enunciated in the previous section).\\ 
% which is proved in essentially the same way.
Moreover, if $x,q \geq 1$ and $a$ is a coprime residue class modulo $q$, put
$$\Phi_{N,B}(x;q,a) := |\{n \leq x : P^-(n(Bn+1)) > N \text{ and } n \equiv a(q)\}|.$$
When $q =1$, we write $\Phi_{N,B}(x;q,a) = \Phi_{N,B}(x)$, and when $B = 1$ in addition, we write $\Phi_N(x)$. Finally, given a map $f: \mb{N} \ra \mb{C}$ and a positive real $X \geq 2$, we shall write
\begin{align*}
%\mb{E}_{n \leq X} f(n) &:= \frac{1}{X}\sum_{n \leq X} f(n)\\
\mb{E}_{n \leq X}^{\log} f(n) &:= \frac{1}{\log X} \sum_{n \leq X} \frac{f(n)}{n}.
\end{align*}
Our first result, to be used throughout the paper, shows that the sets $\mc{A}_{N,B,I}$ and $\mc{A}_{N,B,I,J}$ given above, on which the values of two discrete functions $h_1,h_2$ as well as archimedean characters $n^{iu}$ and $n^{iv}$, can be large in the sense that they have positive upper density\footnote{To be precise, Proposition \ref{ONES} implies directly that these sets have positive upper \emph{logarithmic density}, i.e., $\limsup_{x \ra \infty} \mb{E}_{n \leq x}^{\log} 1_{\mc{A}}(n) > 0$; however, this also implies that the upper density $\limsup_{x \ra \infty} \frac{1}{x}\sum_{n \leq x} 1_{\mc{A}}(n) > 0$ as well.} in general.
\begin{prop} \label{ONES}
Let $\chi_1,\chi_2$ be primitive Dirichlet characters of respective conductors $q_1$ and $q_2$ and orders $k$ and $l$. Let $h_1,h_2 : \mb{N} \ra \mb{T}$ be pseudocharacters modulo $q_1$ and $q_2$, respectively, such that $h_1(n)^r = \tilde{\chi_1}(n)$ and $h_2(n)^s = \tilde{\chi_2}(n)$, for some $r,s \in \mb{N}$. \\
a) Let $u \in \mb{R}$, and let $(\alpha,\beta) \in \mu_{kr}\times\mu_{ls}$. 
Let $\delta > 0$, $z \in \mb{T}$ and let $I \subset \mb{T}$ be an arc with length $\delta$ about $1$. 
%Let $h_1 : \mb{N} \ra \mu_{kr}$ and $h_2: \mb{N} \ra \mu_{ls}$ be pseudocharacters with respective conductors $q_1$ and $q_2$. 
Then, for any $B \geq 1$ satisfying $2q_1q_2|B$, 
$$\mb{E}_{n \leq x}^{\log} 1_{\mc{A}_{N,B,I}(h_1,h_2;\alpha,\beta;u)}(n) \gg \frac{\delta}{krls}\frac{\Phi_{N,B}(x)}{x}$$ 
as $x \ra \infty$. Moreover, if $u \neq 0$ then we may replace $I$ above by any arc of length $\delta$.\\
b) If $u,v \in \mb{R}$ are fixed and such that $u/v \notin \mb{Q}$, and $J_1$ and $J_2$ are arcs in $\mb{T}$ of respective lengths $\delta_1$ and $\delta_2$, then 
$$\mb{E}_{n \leq x}^{\log}1_{\mc{A}_{N,B,J_1,J_2}(h_1,h_2;\alpha,\beta;u,v)} \gg \frac{\delta_1\delta_2}{krls} \frac{\Phi_{N,B}(x)}{x}.$$
%\begin{equation*}
%\{n \leq x : P^-(n(Bn+1)) > N , h_1(n) = \alpha, h_2(Bn+1) = \beta, n^{iu} \in J_{\delta_1}, n^{iv} \in J_{\delta_2}\}
%\end{equation*}
%has upper density $\gg \delta_1\delta_2 (q_1q_2)^{-1} \left(\Phi(x,N)/x\right)$ as $x \ra \infty$.
\end{prop}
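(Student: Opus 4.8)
The plan is to realize the indicator $1_{\mc{A}_{N,B,I}}$ as a combination of multiplicative-type characters in the discrete variables $h_1(n)$, $h_2(Bn+1)$ together with an archimedean Fourier expansion of the cutoff $1_{n^{iu} \in I}$ (and $1_{n^{iv}\in J}$ in part b), and then to show that all the ``off-diagonal'' terms in the resulting sum are negligible by Theorem \ref{TAOBINTHM}, leaving only the main term which produces the claimed lower bound proportional to $\delta\,\Phi_{N,B}(x)/x$ (resp. $\delta_1\delta_2\,\Phi_{N,B}(x)/x$). First I would note that since $h_1,h_2$ take values in finite cyclic groups (as $h_1^{kr}=1$, $h_2^{ls}=1$), the indicators $1_{h_1(n)=\alpha}$ and $1_{h_2(Bn+1)=\beta}$ have exact finite Fourier expansions
\[
1_{h_1(n)=\alpha} = \frac{1}{kr}\sum_{0\le a<kr}\bar{\alpha}^a h_1(n)^a, \qquad 1_{h_2(Bn+1)=\beta} = \frac{1}{ls}\sum_{0\le b<ls}\bar{\beta}^b h_2(Bn+1)^b.
\]
Multiplying these out (with the constraint $P^-(n(Bn+1))>N$ absorbed into the summation, which only affects the local densities and is what converts the main term's ``$1$'' into $\Phi_{N,B}(x)/x$) gives a double sum over $(a,b)$ of logarithmic correlation sums $\mb{E}^{\log}_{n\le x} h_1(n)^a h_2(Bn+1)^b (\cdots)$, where $(\cdots)$ carries the archimedean weight.

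The archimedean cutoff is handled by approximating $1_{n^{iu}\in I}$ from below by a nonnegative trigonometric polynomial in $u\log n$ (a Fej\'er-type kernel) whose zeroth Fourier coefficient is $\gg \delta$ and whose higher coefficients are bounded; writing $n^{iu}=e(u\log n/2\pi)$, each such Fourier mode contributes a factor $n^{i\theta}$ for a real $\theta$ that is a bounded multiple of $u$. Thus each term of the expanded sum is (a bounded coefficient times) $\mb{E}^{\log}_{n\le x} h_1(n)^a\, n^{i\theta}\, h_2(Bn+1)^b$ for integers $0\le a<kr$, $0\le b<ls$ and real $\theta$. The key dichotomy is then: if $(a,b)=(0,0)$ and $\theta=0$ (the main term), we get exactly $\mb{E}^{\log}_{n\le x} 1_{P^-(n(Bn+1))>N} = \Phi_{N,B}(x)/x + o(1)$, whose contribution after accounting for the constants $\frac{1}{kr}\cdot\frac{1}{ls}\cdot(\text{zeroth arch. coeff.})\gg \frac{\delta}{krls}$ is the asserted lower bound. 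Otherwise, I claim the correlation is $o(1)$: when $(a,b)\ne(0,0)$, at least one of $f_1 = h_1^a$ or $f_2 = h_2^b$ is non-pretentious to $\chi n^{is}$ for every Dirichlet character $\chi$ and real $s$ — this is exactly where the minimality hypotheses underlying the pseudocharacter structure are used (a nonzero power of a pseudocharacter below its order is non-pretentious, while the twist $n^{i\theta}$ only shifts the archimedean parameter), so the hypothesis of Theorem \ref{TAOBINTHM} is met and the correlation vanishes; when $(a,b)=(0,0)$ but $\theta\ne0$, the sum is $\mb{E}^{\log}_{n\le x} n^{i\theta}1_{P^-(n(Bn+1))>N}$, which is $o(1)$ by partial summation against the (smoothly varying) count $\Phi_{N,B}(t)$ since $\theta\ne 0$. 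Summing the $O(1)$-many error terms, each $o(1)$, against bounded coefficients gives the bound up to $o(1)$; since $\Phi_{N,B}(x)/x \gg_N 1$ this is a genuine positive-density lower bound. The remark about replacing $I$ by an arbitrary arc of length $\delta$ when $u\neq 0$ follows because then the approximating kernel may be centered anywhere and the analysis is unchanged (for $u=0$, $n^{iu}\equiv1$ forces $I$ to contain $1$). For part b), I would additionally Fourier-expand $1_{n^{iv}\in J_2}$ by a second Fej\'er kernel, producing terms with a combined archimedean twist $n^{i(\theta_1 + \theta_2)}$ where $\theta_1 \in \mb{R}u$, $\theta_2\in\mb{R}v$; the main term is $\theta_1=\theta_2=0$ (and $a=b=0$), and the only new subtlety is that a pure-archimedean error term now has frequency $\theta_1+\theta_2$, which is nonzero precisely because $u/v\notin\mb{Q}$ unless $\theta_1=\theta_2=0$ — so again partial summation kills it, yielding the stated lower bound $\gg \frac{\delta_1\delta_2}{krls}\Phi_{N,B}(x)/x$.

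The main obstacle I anticipate is the verification that every ``mixed'' term with $(a,b)\ne(0,0)$ genuinely satisfies the non-pretentiousness hypothesis of Theorem \ref{TAOBINTHM}, uniformly in the bounded archimedean twist $n^{i\theta}$: one must argue that if $h_1^a$ (for $0<a<kr$) were pretentious to some $\chi n^{is}$, this would force $\tilde\chi_1 = h_1^{kr}$ to have an ``unexpectedly small'' order or a divisibility relation incompatible with $\chi_1$ being primitive of order exactly $k$, or contradict the minimality built into the pseudocharacter definition — this bookkeeping with conductors, orders, and the exponents $r,s$ is the delicate part, and it is essentially the reason the statement carries the precise hypotheses $h_1^r=\tilde\chi_1$, $h_2^s=\tilde\chi_2$ with $\chi_1,\chi_2$ primitive of orders $k,l$. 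A secondary (but routine) technical point is the regularity of $t\mapsto \Phi_{N,B}(t)$ needed to run partial summation against $n^{i\theta}$; this follows from a standard sieve/fundamental-lemma estimate showing $\Phi_{N,B}(t) \sim c_{N,B}\, t$ with $c_{N,B}\gg_N 1$, so the oscillatory integral $\int n^{i\theta}\,d\Phi_{N,B}$ is $o(t)$ for fixed $\theta\ne0$.
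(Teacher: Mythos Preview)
Your overall architecture---Fourier-expand the indicators $1_{h_1(n)=\alpha}$, $1_{h_2(Bn+1)=\beta}$ using roots of unity, minorize the archimedean cutoff by a trigonometric polynomial, then separate a main term from error terms---is exactly the strategy of the paper. The gap is in your treatment of the error terms with $(a,b)\neq(0,0)$. You assert that for every such pair at least one of $h_1^a$, $h_2^b$ is non-pretentious, so that Theorem~\ref{TAOBINTHM} applies. This is false precisely when $r\mid a$ and $s\mid b$: for such exponents $h_1(n)^a=\tilde\chi_1(n)^{a/r}$ and $h_2(Bn+1)^b=\tilde\chi_2(Bn+1)^{b/s}$ are genuine Dirichlet characters (twisted by $n^{i\theta}$), hence \emph{both} pretentious, and Tao's theorem gives nothing. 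Your anticipated ``bookkeeping'' fix via minimality and primitivity does not rescue this: $h_1^r=\tilde\chi_1$ is manifestly pretentious to $\chi_1$, with no contradiction to $\chi_1$ being primitive of order $k$; the minimality of $r$ in the pseudocharacter definition only guarantees non-pretentiousness of $h_1^a$ for $r\nmid a$.

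The paper resolves these ``character-type'' terms by a completely different mechanism that you have overlooked, and which is the reason for the hypothesis $2q_1q_2\mid B$. Since $q_2\mid B$, one has $\tilde\chi_2(Bn+1)^{b/s}=1$ identically, so the $h_2$-factor disappears; the remaining sum $\sum_{n}\tilde\chi_1(n)^{a/r}n^{-iu}$ (over $n$ with $P^-(n(Bn+1))>N$) is then split into residue classes modulo $q_1$, each of which is evaluated by the sieve estimate (Lemma~\ref{TRIV}) and partial summation, and the leading terms cancel by orthogonality of the non-principal character $\chi_1^{a/r}$. This is the content of Lemma~\ref{TWIST} in the paper, and without it the proof does not close. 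A secondary point: a Fej\'er kernel is not by itself a pointwise minorant of $1_I$; you need a Beurling--Selberg-type minorant (as the paper uses) to get an inequality in the right direction.
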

To prove Proposition \ref{ONES}, we need the following variant of Lemma 2.11 from \cite{RIG}, which is easily proven by the Fundamental lemma of the sieve. 
\begin{lem} \label{TRIV}
Let $q,B \geq 1$, $N \geq 2$ and let $a$ be a residue class modulo $q$ such that $(a(Ba+1),q) = 1$. Then as $x \ra \infty$,
\begin{equation*}
\Phi_{N,B}(x;q,a) = \frac{x}{q}\prod_{p|B/(B,q)}\left(1-\frac{1}{p}\right)\prod_{3 \leq p \leq N \atop p\nmid q} \left(1-\frac{2}{p}\right) + O(4^{\pi(N)}).
\end{equation*}
\end{lem}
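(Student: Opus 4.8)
\textbf{Proof plan for Lemma \ref{TRIV}.}

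The plan is to estimate $\Phi_{N,B}(x;q,a)$ by a sieve, counting $n \le x$ in the fixed residue class $a \bmod q$ such that the product $n(Bn+1)$ has no prime factor $\le N$. First I would set up the sifting problem: let $\mc{P}$ denote the set of primes $p \le N$, and let $\mc{A} = \{n(Bn+1) : n \le x,\ n \equiv a \ (q)\}$ be the sequence to be sifted (equivalently, sift the progression $n \equiv a \ (q)$ by the condition $p \nmid n(Bn+1)$ for all $p \le N$). For a prime $p$, the number of residue classes $n \bmod p$ killed by $p$ is the number of solutions of $n(Bn+1) \equiv 0 \ (p)$. If $p \mid q$ then, since $(a(Ba+1),q)=1$, the congruence $n \equiv a$ already forces $p \nmid n(Bn+1)$, so these primes impose no further condition. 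If $p \nmid q$: when $p \mid B$ the quadratic $Bn^2+n$ reduces to $n \equiv 0$, one class; when $p \nmid B$, the two roots $n \equiv 0$ and $n \equiv -B^{-1}$ are distinct for $p \ge 3$ and coincide only at $p = 2$. This is exactly why the Euler factors come out as $(1-1/p)$ for $p \mid B$ (among $p \nmid q$), as $(1-2/p)$ for $3 \le p \le N$ with $p \nmid qB$, and why the prime $2$ and the primes dividing $q$ are omitted from the main product (the prime $2$ contributing the implicit extra factor that is absorbed, and we only run the $(1-2/p)$ product over $p \ge 3$).

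Next I would invoke the Fundamental Lemma of the sieve (e.g.\ in the form in Halberstam--Richert, or Friedlander--Iwaniec), whose hypotheses are straightforward to check here: the density function $\omega(p)$ just described satisfies the linear-sieve dimension bound (indeed $\omega(p) \le 2$), and the remainder in the approximation $|\mc{A}_d| = \tfrac{\omega(d)}{d}\cdot\tfrac{x}{q} + R_d$ is $O(1)$ for each squarefree $d$ supported on primes $\le N$, since we are counting lattice points $n \le x$ in a union of $\prod_{p \mid d}\omega(p) \le 2^{\omega(d)}$ residue classes modulo $qd$. Summing the remainders over $d \mid \prod_{p \le N} p$ gives an error $O(\sum_{d \mid \prod_{p\le N}p} 2^{\omega(d)}) = O(3^{\pi(N)})$, which is $O(4^{\pi(N)})$ as claimed (one can be slightly more careful to get $3^{\pi(N)}$ if desired, but $4^{\pi(N)}$ is ample). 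The main term is then $\tfrac{x}{q}\prod_{p \le N}(1 - \tfrac{\omega(p)}{p})$, and substituting the case analysis for $\omega(p)$ above — noting $\prod_{p\mid q}(1-\omega(p)/p) = 1$, the factor at $p=2$ is handled by restricting the $(1-2/p)$ product to $p \ge 3$, and the primes $p \mid B$ with $p \nmid q$ contribute $(1 - 1/p)$ — yields precisely
\[
\frac{x}{q}\prod_{p \mid B/(B,q)}\left(1 - \frac{1}{p}\right)\prod_{3 \le p \le N,\ p \nmid q}\left(1 - \frac{2}{p}\right) + O(4^{\pi(N)}),
\]
as $x \to \infty$.

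The only genuinely delicate point — really a bookkeeping point rather than a deep obstacle — is the local analysis at the small primes $2$, the primes dividing $B$, and the primes dividing $q$, making sure that the Euler factors are attributed correctly and that no double counting occurs when $p$ divides both $B$ and $q$ (handled by writing $B/(B,q)$). Everything else is a routine application of the Fundamental Lemma, essentially identical to Lemma 2.11 of \cite{RIG} with the extra congruence condition $n \equiv a \ (q)$ grafted on; since that condition is coprime to the sifting primes, it merely rescales the main term by $1/q$ and enlarges the admissible level of distribution harmlessly. I would therefore present the proof as: (1) state the sifting setup and the local densities $\omega(p)$; (2) quote the Fundamental Lemma with the trivial remainder bound; (3) read off the main term and simplify the product.
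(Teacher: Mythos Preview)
Your approach is exactly what the paper does: it gives no detailed proof, but simply remarks that the lemma is a variant of Lemma~2.11 of \cite{RIG} and is ``easily proven by the Fundamental lemma of the sieve.'' Your sketch fills in precisely those details (modulo a small slip in the local analysis at $p=2$: when $2\nmid B$ the two roots $0$ and $-B^{-1}\equiv 1$ are \emph{distinct} mod $2$, not coincident, so the formula as stated tacitly assumes $2\mid B$, which indeed holds in every application in the paper).
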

In the sequel, we write
$$\delta_{N,B,q} := \frac{1}{q}\prod_{p|B/(B,q)}\left(1-\frac{1}{p}\right)\prod_{3 \leq p \leq N \atop p\nmid q} \left(1-\frac{2}{p}\right),$$ and set $\delta_{N,B} = \delta_{N,B,1}$.
\begin{lem} \label{TWIST}
%Let $u \in \mb{R}$, $B \geq 1$ and $\alpha,\beta \in \mb{R}/\mb{Z}$. 
%Assume that $2q_1q_2 |B$, in the above notation. 
Assume the hypotheses of Proposition \ref{ONES}. Furthermore, suppose $N > \max\{q_1,q_2\}$, and that $h_1^j$ and $h_2^m$ are both non-pretentious for all $1 \leq j \leq r-1$ and $1 \leq m \leq s-1$. Then as $x \ra \infty$,
\begin{equation*}
\sum_{n \leq x} \frac{1_{A_{N,B}(h_1,h_2;\alpha,\beta)}(n)}{n^{1+iu}} = \begin{cases} \frac{\delta_{N,B}}{iukrls} + o(\log x) + O_{q_1}\left(|u| 4^{\pi(N)}\right) &\text{ if $u \neq 0$} \\ \frac{1}{krls}\left(\delta_{N,B}+ o(1)\right) \log x + O_{q_1}\left(4^{\pi(N)}\right) &\text{ otherwise.}\end{cases}
\end{equation*}
%where $\delta_{N,B}$ is the constant arising in Lemma \ref{TRIV}.
\end{lem}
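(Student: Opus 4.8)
The plan is to expand the indicator $1_{A_{N,B}(h_1,h_2;\alpha,\beta)}(n)$ using characters of the relevant finite cyclic groups, so that the sum over $n$ decomposes into a ``main term'' coming from the principal characters and ``error terms'' governed by binary correlation sums to which Tao's theorem (Theorem \ref{TAOBINTHM}) applies. Concretely, for $n$ with $P^-(n(Bn+1)) > N$, the conditions $h_1(n) = \alpha$ and $h_2(Bn+1) = \beta$ can be detected by
\[
1_{h_1(n)=\alpha} = \frac{1}{kr}\sum_{j=0}^{kr-1} (\bar\alpha h_1(n))^j, \qquad 1_{h_2(Bn+1)=\beta} = \frac{1}{ls}\sum_{m=0}^{ls-1}(\bar\beta h_2(Bn+1))^m,
\]
using that $h_1(n) \in \mu_{kr}$ and $h_2(Bn+1) \in \mu_{ls}$ whenever those arguments are coprime to $q_1$, $q_2$ respectively (which is guaranteed by $P^-(\cdot) > N > \max\{q_1,q_2\}$). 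Multiplying out, the sum $\sum_{n \le x} 1_{A_{N,B}}(n)/n^{1+iu}$ becomes $\frac{1}{krls}\sum_{j,m}\bar\alpha^j\bar\beta^m \sum_{n \le x} 1_{P^-(n(Bn+1))>N}\, h_1(n)^j h_2(Bn+1)^m n^{-1-iu}$.

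The $(j,m) = (0,0)$ term is exactly $\sum_{n \le x} 1_{P^-(n(Bn+1))>N}\, n^{-1-iu}$, which by partial summation against Lemma \ref{TRIV} (in the case $q=1$, $a=1$) evaluates to $\frac{\delta_{N,B}}{iu} + O_{q_1}(|u|4^{\pi(N)})$ when $u \ne 0$, and to $(\delta_{N,B} + o(1))\log x + O(4^{\pi(N)})$ when $u = 0$; this produces the stated main terms after dividing by $krls$. For the remaining pairs $(j,m) \ne (0,0)$, I claim each inner sum is $o(\log x)$. Here I would split into cases: if $m \ne 0$ and $j \ne 0$, then $h_1^j$ and $h_2^m$ are multiplicative with $h_1^j$ non-pretentious (by hypothesis, for $1 \le j \le r-1$; for larger $j$ one uses $h_1^{kr} = 1$ and periodicity to reduce to $j \bmod kr$, noting $h_1^j = \tilde\chi_1^{j/r}$-type twists of non-pretentious functions remain non-pretentious unless $j \equiv 0$), so Theorem \ref{TAOBINTHM} with $(f_1,f_2) = (h_1^j, h_2^m)$, $(a_1,b_1) = (1,0)$, $(a_2,b_2) = (B,1)$ — for which $a_1 b_2 - a_2 b_1 = 1 \ne 0$ — gives the inner sum without the coprimality condition is $o(\log x)$; re-inserting the sieve condition $P^-(n(Bn+1)) > N$ via a standard finite inclusion–exclusion over the primes $p \le N$ (absorbing $B$'s prime factors) costs only a bounded number of shifted sums of the same shape, each still $o(\log x)$. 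If exactly one of $j,m$ is zero, say $m = 0$, $j \ne 0$, the inner sum is $\sum_{n \le x} 1_{P^-(\cdots)>N} h_1(n)^j n^{-1-iu}$; absent the sieve condition this is a linear correlation of the non-pretentious $h_1^j$ twisted by $n^{-iu}$, which is $o(\log x)$ by Halász-type input (or again Theorem \ref{TAOBINTHM} applied with a trivial second factor), and the sieve condition is removed as before.

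The main obstacle I anticipate is bookkeeping the non-pretentiousness of $h_1^j$ and $h_2^m$ for the full range $1 \le j \le kr-1$, $1 \le m \le ls-1$ (not merely up to $r-1$, $s-1$), together with the extra twist $n^{-iu}$: one must check that $h_1^j \cdot n^{-iu}$ cannot pretend to be a character $\chi n^{it}$, i.e. that $h_1^j$ is not pretentious to $\chi n^{i(u+t)}$ for any $\chi, t$ — but since $h_1$ is a pseudocharacter, $h_1^j$ for $j$ not divisible by $r$ is a genuine (archimedean-twist-free up to the implicit structure) non-pretentious function by the minimality built into the definition, and for $j$ a nonzero multiple of $r$ (but not of $kr$) one has $h_1^j = \tilde\chi_1^{j/r}$, a non-principal Dirichlet-type character, still non-pretentious to any $\chi n^{it}$. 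The other delicate point is tracking the $O_{q_1}(|u|4^{\pi(N)})$ error uniformly through the partial summation in the $u \ne 0$ case, which comes from the $O(4^{\pi(N)})$ error in Lemma \ref{TRIV} integrated against $d(n^{-iu})$; this is routine but must be stated with the right dependence on $u$ and $N$.
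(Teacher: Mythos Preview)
Your overall plan --- detect the level-set conditions by characters of $\mu_{kr}$ and $\mu_{ls}$, isolate the $(j,m)=(0,0)$ contribution via Lemma \ref{TRIV} and partial summation, and dispose of the remaining terms --- is exactly the paper's. The gap is in your handling of the pairs $(j,m)$ with $r \mid j$ and $s \mid m$ but $(j,m) \neq (0,0)$. You claim that for $j$ a nonzero multiple of $r$ (and not of $kr$), $h_1^j = \tilde{\chi}_1^{j/r}$ is ``a non-principal Dirichlet-type character, still non-pretentious to any $\chi n^{it}$.'' That is false: any Dirichlet character $\psi$ has $\mb{D}(\psi,\psi;x) = 0$, so $\tilde{\chi}_1^{j/r}$ is pretentious to itself, and the same holds for $\tilde{\chi}_2^{m/s}$. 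Thus for these $(j,m)$ the non-pretentiousness hypothesis of Theorem \ref{TAOBINTHM} fails for \emph{both} factors, and your appeal to Tao's theorem is illegitimate. (Your Hal\'asz argument for the case where exactly one of $j,m$ is zero is fine, since a non-principal character is genuinely not $n^{it}$-pretentious for any $t$; the failure is in the honestly binary case where both exponents are nonzero multiples of $r$, $s$.)

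The paper treats precisely these terms by a device you do not invoke: the standing hypothesis $2q_1q_2 \mid B$ forces $Bn+1 \equiv 1 \pmod{q_2}$, hence $\tilde{\chi}_2(Bn+1)^{m/s} = 1$ identically, and the correlation collapses to the single sum $\sum_{P^-(n(Bn+1))>N} \tilde{\chi}_1(n)^{a}/n^{1+iu}$ with $a = j/r$. Since $N > q_1$ every surviving $n$ is coprime to $q_1$, so one splits into residue classes $c \pmod{q_1}$, applies Lemma \ref{TRIV} and partial summation in each class to obtain a main term independent of $c$, and then sums against $\chi_1(c)^{a}$; orthogonality kills the $a \neq 0$ contributions up to $O_{q_1}((1+|u|)4^{\pi(N)})$. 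This is where the $O_{q_1}(|u|4^{\pi(N)})$ error in the statement actually comes from for these terms. Without this reduction of the bilinear character correlation to a linear sum (or an equivalent argument exploiting the periodicity of $\tilde{\chi}_2(Bn+1)$), your proof is incomplete.
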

\begin{proof}
%As in the proof of Lemma \ref{EXT}, 
Since $\alpha \in \mu_{kr}$ and $\beta \in \mu_{ls}$, we have the identities
\begin{align*}
1_{h_1(n) = \alpha} &= \frac{1}{kr} \sum_{0 \leq j \leq kr-1} (h_1(n)\bar{\alpha})^j\\
1_{h_2(n) = \beta} &= \frac{1}{ls} \sum_{0 \leq m \leq ls-1} (h_2(n)\bar{\beta})^j.
\end{align*}
It follows that
\begin{align}
&\sum_{n \leq x} \frac{1_{\mc{A}_{N,B}(h_1,h_2;\alpha,\beta)}(n)}{n^{1+iu}} = \sum_{n \leq x \atop P^-(n(Bn+1)) > N} \frac{1_{h_1(n) = \alpha}1_{h_2(n) = \beta}}{n^{1+iu}} \nonumber\\
&= \frac{1}{klrs}\sum_{0 \leq j \leq kr-1} \sum_{0 \leq m \leq ls-1} \alpha^{-j}\beta^{-m} \sum_{n \leq x \atop P^-(n(Bn+1)) > N} \frac{h_1(n)^jh_2(n)^mn^{-iu}}{n}. \label{EXPWITHLAG}
%\prod_{1 \leq \mu \leq kr-1}\left(1-\zeta_{kr}^{\mu}(\bar{\alpha}h_1(n)\right)\prod_{1 \leq \nu \leq ls-1} \left(1-\zeta_{ls}^{\nu}\bar{\beta}h_2(n)\right) \\
%&= \sum_{S \subseteq \{1,\ldots,kr - 1\}} \sum_{T \subseteq \{1,\ldots,ls-1\}} (-1)^{|S| +|T|} \alpha^{-|S|}\zeta_{kr}^{\Sigma(S)}\beta^{-|T|}\zeta_{ls}^{\Sigma(T)} \sum_{n \leq x \atop P^-(n(Bn+1)) > N} \frac{h_1(n)^{|S|}n^{iu}h_2(Bn+1)^{|T|}}{n},
\end{align}
%where $\Sigma(S) := \sum_{j \in S} j$. 
As $h_1^u$ and $h_2^v$ are both non-pretentious for all $1 \leq u \leq r-1$ and $1 \leq v \leq s-1$, it follows that the multiplicative functions
\begin{align*}
\phi_{u+ar}(n) &:= h_1(n)^{j+ar}1_{P^-(n)>N}n^{-iu} = h_1(n)^j\tilde{\chi}_1(n)^a1_{P^-(n)>N}n^{-iu}\\
\psi_{v+bs}(n) &:= h_2(n)^{m+bs}1_{P^-(n) > N} = h_2(n)^m\tilde{\chi}_2(n)^b1_{P^-(n) > N}
\end{align*}
are both non-pretentious for such $u,v$, and any $0 \leq a \leq k-1$, $0 \leq b \leq s-1$.
%By the minimality of $k,l,r$ and $s$, $h_1^{|S|}n^{iu}$ and/or $h_2^{|T|}$ is non-pretentious unless $|S|\equiv 0 (r)$ and $|T| \equiv 0 (s)$. 
%In the former case, the inner sums are all of size $o(\log x)$ 
By Theorem \ref{TAOBINTHM}, it follows that
\begin{align}
&\left|\mathop{\sum_{0 \leq j \leq kr-1 } \sum_{0 \leq m \leq ls-1}}_{r\nmid j \text{ or } s \nmid m}  \alpha^{-j}\beta^{-m} \sum_{n \leq x \atop P^-(n(Bn+1)) > N} \frac{h_1(n)^jh_2(n)^mn^{-iu}}{n} \right| \nonumber\\
&\leq \mathop{\sum_{0 \leq j \leq kr-1 } \sum_{0 \leq m \leq ls-1}}_{r\nmid j \text{ or } s \nmid m}\left|\sum_{n \leq x} \frac{\phi_j(n)\psi_m(Bn+1)}{n}\right| = o(\log x), \label{EASYONES}
\end{align}
where the estimate depends on $k,r,l$ and $s$.
%
%When $|S|=\mu r$ and $|T| = \nu s$ we can rewrite the inner sums as
When $j = ra$ and $m = bs$, we instead have
\begin{equation}
\sum_{n \leq x} \frac{\phi_{ar}(n)\psi_{bs}(Bn+1)}{n} = \sum_{n \leq x \atop P^-(n(Bn+1)) > N} \frac{\tilde{\chi}_1(n)^{a} \tilde{\chi}_2(Bn+1)^{b}}{n^{1+iu}} = \sum_{n \leq x \atop P^-(n(Bn+1)) > N} \frac{\tilde{\chi}_1(n)^{a}}{n^{1+iu}} \label{NOCORR}
\end{equation}
for each $b$, as $q_2|B$. Now, as $N > q_1$, for each $n$ with $P^-(n(Bn+1)) > B$ we must have $(n,q_1) = 1$. Thus, splitting the rightmost sum in \eqref{NOCORR} into coprime residue classes modulo $q_1$, the RHS of \eqref{NOCORR} becomes
\begin{equation*}
\sum_{c(q_1)}\chi_1(c)^{a} \sum_{n \leq x \atop P^-(n(Bn+1)) > N} \frac{1_{n \equiv c (q_1)}}{n^{1+iu}}.
\end{equation*}
We first consider the case $u \neq 0$. Applying the previous lemma and partial summation, we get that for each coprime residue $c$ modulo $q_1$,
\begin{align}
\sum_{n \leq x \atop P^-(n(Bn+1)) > N} \frac{1_{n \equiv c(q_1)}}{n^{1+iu}} &= \int_1^x \frac{1}{t^{1+iu}} d\left\{\Phi_{N,B}(x;q_1,c)\right\} \nonumber\\
&= \delta_{N,B,q_1}\int_1^x \frac{dt}{t^{1+iu}} + O\left((q_1+|u|4^{\pi(N)}\int_1^x \frac{dt}{t^2}\right) \label{U1}\\
&= \frac{\delta_{N,B,q_1}}{iu}\left(1-x^{-iu}\right) + O(q_1(1+|u|)4^{\pi(N)}) \label{U2EQ}.
\end{align}
The main term being independent of $c$ modulo $q_1$, we can invert the orders of summation and use orthogonality to get that whenever $a \neq 0$, we have
\begin{align*}
\sum_{c(q_1)}\chi_1(c)^{a}\sum_{n \leq x \atop P^-(n(Bn+1)) > N} \frac{1_{n \equiv c (q_1)}}{n^{1+iu}} &= \frac{\delta_{N,B,q_1}}{iu}\left(1-x^{-iu}\right)\sum_{c(q_1)} \chi_1(c)^a + O\left(q_1^2(1+|u|4^{\pi(N)})\right) \\
&\ll q_1^2\left((1+|u|)4^{\pi(N)}\right).
\end{align*}
As such, it follows that whenever $a \neq 0$, we have
\begin{equation}\label{BYPERIOD}
\sum_{n \leq x} \frac{\phi_{ar}(n)\psi_{bs}(Bn+1)}{n} \ll q_1^2\left((1+|u|)4^{\pi(N)}\right).
\end{equation}
In the remaining case $a = 0$, we have
\begin{equation}\label{MAINT}
\sum_{n \leq x \atop P^-(n(Bn+1)) > N} \frac{1}{n^{1+iu}} = \frac{\delta_{N,B}}{iu}\left(1-x^{-iu}\right) + O\left((1+|u|)4^{\pi(N)}\right),
\end{equation}
as this is the the LHS of \eqref{U1}, but with $q_1$ replaced by $1$. \\
Combining \eqref{EASYONES}, \eqref{BYPERIOD} and \eqref{MAINT}, and inserting these into \eqref{EXPWITHLAG}, we find that
$$\sum_{n \leq x} \frac{1_{\mc{A}_{N,B}(h_1,h_2;\alpha,\beta)}(n)}{n^{1+iu}} = \frac{1}{krls}\left(\frac{\delta_{N,B}}{iu}\left(1-x^{-iu}\right) + O\left(q_1(1+|u|)4^{\pi(N)}\right) + o(\log x)\right)$$
when $u \neq 0$, as claimed.\\
%This yields the quoted main term in the case that $u \neq 0$. \\
The case $u = 0$ follows the same lines, but is simpler. %The only noteworthy change is that the integral in \eqref{U1} is replaced by $\log x$, and the term depending on $u$ in the error term there can be deleted. This results in 
%$$\sum_{n \in \mc{A}_{N,B}(h_1,h_2;\alpha,\beta)} \frac{1}{n} = \delta_{N,B} \log x + O\left(4^{\pi(N)}\right) + o(\log x).$$
The proof is now complete.
\end{proof}
\noindent Before proceeding to the proof of Proposition \ref{ONES}, we pause to recall the following objects of relevance to it. \\
Given $K \in \mb{N}$, let $B_K: \mb{R}/\mb{Z} \ra \mb{C}$ denote the degree $K$ \emph{Beurling polynomial} (see Section 1.2 of \cite{Mon} for a definition and some of its properties). Recall that it satisfies the properties that: \\
i) if $\psi(t) := \{t\}-1/2$ is the sawtooth function then $-B_K(-t) \leq \psi(t) \leq B_K(t)$. As such, given an interval $I \subseteq \mb{R}/\mb{Z}$ with endpoints $0 \leq a < b < 1$, we have
\begin{equation} \label{BKUPPLOW}
|I| - B_K(b - t) - B_K(t-a) \leq 1_I(t) \leq |I| +B_K(t-b) + B_K(a- t).
\end{equation}
ii) the Fourier coefficients\footnote{As usual, for $t \in \mb{R}$ we write $e(t) := e^{2\pi i t}$.} $\hat{B}_K(m) := \int_0^1 B_K(t)e(-mt) dt$ satisfy 
\begin{equation*}
\hat{B}_K(m) = \begin{cases} 0 &\text{ for $|m| > K$} \\ \frac{1}{2(K+1)} &\text{ if $m = 0$} \\ O\left(\frac{1}{m}\right) &\text{ for all $m \neq 0$,}\end{cases}
\end{equation*}
the implicit constant in the last estimate being absolute. 
%Abusing notation, throughout this paper we shall think of $B_K$ as a polynomial on $\mb{T}$, i.e., writing $B_K(e^{i\theta})$ in place of $B_K(\theta)$. Hence,
%\begin{equation*}
%B_K(z) := \sum_{|k| \leq K} c_k z^k,
%\end{equation*}
%where $c_k := \hat{B}_K(k)$.
\begin{proof}[Proof of Proposition \ref{ONES}]
a) When $u = 0$ it is clear that 
$$\mc{A}_{N,B,I}(h_1,h_2;\alpha,\beta;u) = \{n \in \mc{A}_{N,B}(h_1,h_2;\alpha,\beta) : 1 \in I\} = \mc{A}_{N,B}(h_1,h_2;\alpha,\beta),$$ since $1 \in I$. 
%Using the estimate $$\Phi(x;N)^2 \asymp x^2\prod_{p \leq N} \left(1-\frac{1}{p}\right)^2 \asymp x^2\prod_{3 \leq p \leq N} \left(1-\frac{2}{p}\right),$$ we can deduce that
By definition, we get
\begin{equation}\label{DELTNBBD}
\delta_{N,B} = \frac{\Phi_{N,B}(x)}{x} + O\left(4^{\pi(N)}x^{-1}\right).
%\prod_{p|B} \left(1-\frac{1}{p}\right) \prod_{3 \leq p \leq N} \left(1-\frac{2}{p}\right) \gg \frac{\Phi(B)}{B}\left(\frac{\Phi(x;N)}{x}\right)^2.
\end{equation}
Proposition \ref{ONES} in the case $u = 0$ thus follows from Lemma \ref{TWIST}, since
$$\sum_{n \leq x}  \frac{1_{\mc{A}_{N,B}(h_1,h_2;\alpha,\beta)}(n)}{n} = \left(\frac{\delta_{N,B,1}}{krls}+o(1)\right) \log x + O\left(4^{\pi(N)}\right) \gg \frac{\delta}{krls}\frac{\Phi_{N,B}(x)}{x} \log x.$$
We now assume that $u \neq 0$. Here, we no longer assume that $I$ is an arc around $1$, but rather any arc of length $\delta$ (the proof being the same regardless of the centre point of the arc). Let $K$ be a large integer, and suppose $x$ is sufficiently large in terms of $K.$ Write $I' \subseteq \mb{R}/\mb{Z}$ to be the interval that maps onto $I$ under exponentiation. From \eqref{BKUPPLOW}, it follows that %We shall bound the indicator function $1_I$ from below using the Beurling polynomial of degree $K$. 
\begin{align}
&\sum_{n\leq x} \frac{1_{\mc{A}_{N,B,I}(h_1,h_2;\alpha,\beta;u)}(n)}{n} = \sum_{n \in \mc{A}_{N,B}(h_1,h_2;\alpha,\beta)} \frac{1_{I'}\left(\left\{\frac{u\log n}{2\pi}\right\}\right)}{n} \nonumber\\
&\geq |I'|\sum_{n \leq x} \frac{1_{\mc{A}_{N,B}(h_1,h_2;\alpha,\beta)}(n)}{n} - \sum_{|m| \leq K}c_m\left(e(mb) \sum_{n \leq x}\frac{1_{\mc{A}_{N,B}(h_1,h_2;\alpha,\beta)}(n)}{n^{1+imu}}+ e(-ma) \sum_{n\leq x}\frac{1_{\mc{A}_{N,B}(h_1,h_2;\alpha,\beta)}(n)}{n^{1-imu}}\right) \nonumber\\
&\geq \frac{\delta \delta_{N,B}}{krls} \log x - 2\sum_{|m| \leq K}|\hat{B}_K(m)|\left|\sum_{n \leq x}\frac{1_{\mc{A}_{N,B}(h_1,h_2;\alpha,\beta)}(n)}{n^{1+imu}}\right|, \label{MINOR}
\end{align}
where we used $|I'| \asymp |I| =\delta$ in the last line. We consider the second term on the RHS of \eqref{MINOR}. Applying Lemma \ref{TWIST} for each $m$ and summing, we get
\begin{align*}
\sum_{1 \leq |m| \leq K} |\hat{B}_K(m)|\left|\sum_{n \leq x}  \frac{1_{\mc{A}_{N,B}(h_1,h_2;\alpha,\beta)}(n)}{n^{1+imu}}\right| &\ll \sum_{1 \leq m \leq K} \frac{1}{m} \left(\frac{\delta_{N,B}}{m|u|} + o(\log x) + (1+m|u|)4^{\pi(N)}\right) \\
&\ll |u|^{-1} + o((\log K)(\log x)) + K^2|u|4^{\pi(N)}.
\end{align*}  
The term with $k = 0$ gives
$$|\hat{B}_K(0)| \sum_{n \in \mc{A}_{N,B}(h_1,h_2;\alpha,\beta)} \frac{1}{n} \leq \frac{\log x}{2K}.$$
As such, we get
\begin{equation*}
\sum_{n \in \mc{A}_{N,B,I}(h_1,h_2;u)} \frac{1}{n} \geq \left(\frac{\delta \delta_{N,B}}{krls}-\frac{1}{2K} - o(\log K)\right) \log x - O\left(|u|^{-1} + K^2|u|4^{\pi(N)}\right).
\end{equation*}
Choosing $K = K(x)$ tending to infinity with $x$, but sufficiently slow-growing so that $o((\log K)(\log x)) = o(\log x)$ and $K^24^{\pi(N)} = o(\log x)$, proves the claim.\\
b) The case when $uv = 0$ is similar to a) and we assume $uv \neq 0$. We are interested in 
\begin{equation} \label{WITHTWOCONDS}
\sum_{n \leq x \atop n \in \mc{A}_{N,B}(h_1,h_2;\alpha,\beta)} \frac{1_{J_{1}}(n^{iu})1_{J_{2}}(n^{iv})}{n}.
\end{equation}
We minorize each indicator function by a Beurling polynomial of degree $K$ in the same way as was done just above, and we merely sketch the argument, showing where the condition $u/v \notin \mb{Q}$ is relevant. By the triangle inequality, \eqref{WITHTWOCONDS} is bounded from below by
\begin{align*}
&|J_{1}||J_{2}|\sum_{n \leq x \atop n \in \mc{A}_{N,B}(h_1,h_2;\alpha,\beta)}\frac{1}{n}\\
&-\left(\sum_{1 \leq |m_1| \leq K}\frac{1}{|m_1|}\left|\sum_{n \in \mc{A}_{N,B}(h_1,h_2;\alpha,\beta)} \frac{1}{n^{1+im_1u}}\right| + \sum_{1 \leq |m_2| \leq K} \frac{1}{|m_2|}\left|\sum_{n \in \mc{A}_{N,B}(h_1,h_2;\alpha,\beta)} \frac{1}{n^{1+im_2v}}\right|\right)\\  
&-\sum_{1 \leq |m_1|,|m_2| \leq K} \frac{1}{|m_1m_2|}\left|\sum_{n \in \mc{A}_{N,B}(h_1,h_2;\alpha,\beta)} \frac{1}{n^{1+i(m_1u + m_2v)}}\right| - O\left(\frac{\log x}{K}\right).
\end{align*}
Now, as $m_1u \neq -m_2v$ for all $1 \leq |m_1|,|m_2| \leq K$, we can estimate each of the sums in absolute value above via Lemma \ref{TWIST} as before. If $x$ is sufficiently large (in terms of $\left(\min_{|m_1|,|m_2|\leq K}|m_1u+m_2v|\right)^{-1}$) the claim of part b) follows just as that of part a) did.
\end{proof}
\subsection{Reduction to Pseudo-pretentious Functions}
%We need two further claims. 
Let $f,g : \mb{N} \ra \mb{T}$ be completely multiplicative functions. Assume, as per the hypotheses of Theorem \ref{EKCFULL}, that $\overline{\{(f(n),g(n+1))\}_n}\ne \mb{T}^2.$ Using Theorem \ref{TAOBINTHM}, we shall show in this subsection that $f$ and $g$ must both be pseudo-pretentious.
\begin{prop} \label{REDUCE}
Let $f,g : \mb{N} \ra \mb{T}$ be completely multiplicative functions. Suppose $\{(f(n),g(n+1))\}_n$ is such that there is an $\e > 0$ and a pair $(z,w) \in \mb{T}^2$ for which\footnote{For a pair $\mbf{z} \in \mb{C}^2$, we write $\|\mbf{z}\|_{\ell^1} := |z_1| + |z_2|$.} 
$$\|(f(n),g(n+1))-(z,w)\|_{\ell_1} \geq \e \text{ for all $n$ sufficiently large}.$$ 
Then there exist primitive Dirichlet characters $\chi_1,\chi_2$ to respective moduli $q_1,q_2 = O_{\e}(1)$, minimal positive integers $k,l = O_{\e}(1)$, real numbers $t_1,t_2 = O_{\e}(1)$ and completely multiplicative functions $h_1,h_2 : \mb{N} \ra \mb{T}$ such that: 
\begin{enumerate}[(i)]
\item$\mb{D}(f,h_1n^{it_1};x),\mb{D}(g,h_2n^{it_2};x) \ll_{\e} 1$,
\item $h_1^k = \tilde{\chi}_1$ and $h_2^l = \tilde{\chi}_2$.
\end{enumerate}
\end{prop}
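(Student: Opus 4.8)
The plan is to reduce the density hypothesis on the pair $\{(f(n),g(n+1))\}_n$ to a pair of large binary correlations, one for $f$ and one for $g$, and then invoke Proposition \ref{ULTPRETLEM}. First I would argue that the separation hypothesis $\|(f(n),g(n+1))-(z,w)\|_{\ell^1}\ge \e$ for all large $n$ forces the sequence $\{(f(n),g(n+1))\}_n$ to fail to be equidistributed in $\mb{T}^2$: indeed, the open ball of radius $\e/3$ about $(z,w)$ is eventually avoided, so the limiting logarithmic distribution (along any subsequence) cannot be Haar measure on $\mb{T}^2$. By the two-dimensional Erd\H{o}s--Tur\'an / Weyl inequality, there must then exist a nonzero frequency vector $(k,l)\in\mb{Z}^2\setminus\{(0,0)\}$ such that
\[
\left|\mb{E}_{n\le x}^{\log} f(n)^k g(n+1)^l\right| \gg_{\e} 1
\]
as $x\to\infty$ (along a subsequence of $x$). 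Here I would take $(k,l)$ of minimal "height" with this property, and note the bound on $|k|,|l|$ is $O_\e(1)$ since the Erd\H{o}s--Tur\'an bound only involves frequencies up to a threshold depending on $\e$ alone.

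The next step is to rule out the degenerate cases $k=0$ or $l=0$. If $l=0$ and $k\ne 0$, the correlation is just $\mb{E}_{n\le x}^{\log} f(n)^k$, whose nonvanishing (by Hal\'asz's theorem, or directly by Theorem \ref{TAOBINTHM} applied with a trivial second factor) would force $f^k$ to pretend to be $\chi_1 n^{it_1}$ for some Dirichlet character $\chi_1$ and $t_1=O_\e(1)$; but this already contradicts $\overline{\{f(n)\}_n}=\mb{T}$ unless... actually this is consistent, so rather than excluding it I would simply treat it as a special (easier) instance, or observe that since $\{f(n)\}_n$ is dense, $f$ is not eventually rational, so $f^k$ cannot pretend to be a Dirichlet character times $n^{it_1}$ with $t_1=0$; if $t_1\ne 0$ we still get the desired conclusion for $f$ directly and then run a symmetric argument for $g$. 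The cleanest route, though, is: having established $|k|+|l|\ge 1$, apply Theorem \ref{TAOBINTHM} directly. Since $a_1b_2-a_2b_1 = 1\cdot 1 - 1\cdot 0 = 1\ne 0$ for the shift (taking $a_1=a_2=1$, $b_1=0$, $b_2=1$), the contrapositive of that theorem gives that $f^k$ (if $k\ne 0$) must be pretentious to some $\chi n^{it}$, and likewise $g^l$. So in all cases, with $(k,l)$ as above,
\[
\left|\mb{E}_{n\le x}^{\log} f(n)^k g(n+1)^l\right| \gg_\e 1 \quad\Longrightarrow\quad f \text{ and } g \text{ are each pseudo-pretentious}
\]
via Proposition \ref{ULTPRETLEM} applied with the pair $(f,g)$: that proposition directly yields characters $\chi_1,\chi_2$ of conductors $q_1,q_2=O(1)$, reals $t_1,t_2=O(1)$, and completely multiplicative $h_1,h_2$ with $h_1^k=\tilde\chi_1$, $h_2^l=\tilde\chi_2$, and $\mb{D}(f,h_1n^{it_1};x),\mb{D}(g,h_2n^{it_2};x)\ll 1$. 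Finally I would pass to \emph{minimal} such $k,l$ by taking the smallest positive integers for which $f^k$, respectively $g^l$, is pretentious to a character times an archimedean twist; minimality is what pins down $h_1,h_2$ as genuine pseudocharacters in the sense defined above, and the minimal $k,l$ are still $O_\e(1)$ since they divide the ones produced above.

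The main obstacle I anticipate is the first step: converting the $\ell^1$-separation statement into a clean statement about non-equidistribution and then into a single large binary correlation with frequencies bounded in terms of $\e$. One has to be careful that the separation hypothesis is about the \emph{pair} $(f(n),g(n+1))$, so the bad event is avoidance of a small ball in $\mb{T}^2$, and extracting a single nonzero character $(n\mapsto f(n)^k g(n+1)^l)$ with large logarithmic mean requires the two-dimensional Erd\H{o}s--Tur\'an inequality together with a pigeonhole over the $O_\e(1)$ many relevant frequency vectors; one also needs to handle the fact that the logarithmic averages may only converge along a subsequence of $x$, which is harmless since Theorem \ref{TAOBINTHM} and Proposition \ref{ULTPRETLEM} are statements about $\limsup$-type lower bounds. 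A secondary subtlety is ensuring the constants $k,l,q_1,q_2,t_1,t_2$ depend only on $\e$ and not on $f,g$; this is inherited from the uniform quantitative form of Theorem \ref{TAOBINTHM} (the conductor and $t$ produced there are $O(1)$) combined with Lemma 2.5 and Proposition 2.8 of \cite{RIG} as invoked in the proof of Proposition \ref{ULTPRETLEM}.
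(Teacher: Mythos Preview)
Your overall architecture---non-equidistribution in $\mb{T}^2$, two-dimensional Erd\H{o}s--Tur\'an, pigeonhole to extract a large binary correlation $\mb{E}_{n\le x}^{\log} f(n)^{m_1}g(n+1)^{m_2}$ with $|m_1|,|m_2|=O_\e(1)$, then Proposition \ref{ULTPRETLEM}---is exactly the paper's. The gap is in your handling of the degenerate frequencies $m_1m_2=0$.

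If Erd\H{o}s--Tur\'an hands you $(m_1,0)$ with $m_1\neq 0$, the large correlation is just $\mb{E}_{n\le x}^{\log} f(n)^{m_1}$, and (via Hal\'asz or Tao) you learn only that $f$ is pseudo-pretentious. Your suggestion to ``run a symmetric argument for $g$'' does not work: there is no symmetry available at this point, because the single frequency vector the pigeonhole produced tells you nothing about $g$. Nor can you simply appeal to Proposition \ref{ULTPRETLEM}, which requires $k,l\in\mb{N}$. Concretely, if $f$ is \emph{not} pseudo-pretentious but $g$ \emph{is}, then in the Erd\H{o}s--Tur\'an sum every term with $m_1\neq 0$ is $o(\log x)$ by Tao's theorem, while the $(0,m_2)$ terms can be genuinely large---so the discrepancy lower bound is perfectly consistent with learning nothing new, and your contradiction evaporates.

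The paper closes this gap with a separate, substantial result, Proposition \ref{NOZEROH}: if \emph{exactly one} of $f,g$ is pseudo-pretentious then $\{(f(n),g(n+1))\}_n$ is already dense in $\mb{T}^2$. Its proof is not a one-liner; it uses the concentration Lemmas \ref{SMALLDEV} and \ref{CHEB} to approximate the pseudo-pretentious factor pointwise on a sifted set, then a Beurling-polynomial argument (together with Tao's theorem and Hal\'asz) to locate many $n$ hitting any prescribed target $(z,w)$. With \ref{NOZEROH} in hand, the paper's proof of Proposition \ref{REDUCE} proceeds by contradiction assuming \emph{neither} $f$ nor $g$ is pseudo-pretentious; then Hal\'asz immediately kills all $(m_1,m_2)$ with $m_1m_2=0$, forcing a genuinely mixed frequency and allowing Proposition \ref{ULTPRETLEM} to apply. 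You should either supply an argument of this type or invoke Proposition \ref{NOZEROH} explicitly.
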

This is a two-dimensional analogue of the reduction argument used at the beginning of Section 2 of \cite{RIG}. To appropriately formalize the arguments leading to Proposition \ref{REDUCE}, we recall the following (essentially standard) definitions.
\begin{def1}
Let $d,N \geq 1$ and let $\{a_n\}_n \subset \mb{T}^d$. The \emph{logarithmic discrepancy (of height $N$)} of $\{a_n\}_n$ is the quantity
\begin{equation*}
D_N(\{a_n\}_n) := \sup_{I_1,\ldots,I_d \subseteq \mb{T} \atop I_j \text{ arcs}} \left|\frac{1}{\log N}\sum_{n \leq N \atop a_n \in B(\mbf{I})} \frac{1}{n} - \prod_{1 \leq j \leq d} |I_j|\right|,
\end{equation*}
where $B(\mbf{I}) := \prod_{1 \leq j \leq d} I_j$. Similarly, we let $D_N^{\ast}(\{a_n\}_n)$ denote \footnote{By the triangle inequality, it is easy to check that if $\{a_n\}_n \subset \mb{T}^d$ then
\begin{equation}\label{DISCPROPERTY}
D_N^{\ast}(\{a_n\}_n) \leq D_N(\{a_n\}_n) \leq 2^dD_N^{\ast}(\{a_n\}_n).
\end{equation}
We will use this relationship between $D_N$ and $D_N^{\ast}$ below.} 
the same quantity but where the sup is over all $d$-tuples of arcs $I_j$ all of whose left endpoints are 1.
\end{def1}
\begin{def1}
A sequence $\{a_n\}_n \subset \mb{T}^d$ is \emph{logarithmically equidistributed} if $D_N(\{a_n\}_n) = o(1)$, as $N \ra \infty$. %for any $d$-tuple of arcs $I_1,\ldots,I_d \subseteq \mb{T}$ we have
%\begin{equation*}
%\frac{1}{\log x} \sum_{n \leq x \atop a_n \in B(\mbf{I})} 1 = \prod_{1 \leq j \leq d} |I_j| + o(1),
%\end{equation*}
%If $\{a_n\}_n$ is not logarithmically equidistributed then we will say that it is \emph{NLE} otherwise.
\end{def1}
%\begin{def1}
%We say that $\{a_n\}_n \subset \mb{T}^d$ is \emph{$\e$-NLE} if $D_N(\{a_n\}_n) \geq \e$.
%\end{def1}
It is easy to see that logarithmically equidistributed sequences in $\mb{T}^d$ are also dense in $\mb{T}^d.$ 
Before launching into the proof, we must exclude the following degenerate case from consideration. 
%In this direction, we have the following, which is similar to the combination of Lemmata 2.1-2.5 in Section \ref{RIGSEC}.
\begin{prop} \label{NOZEROH}
Let $f,g: \mb{N} \ra \mb{T}$ be completely multiplicative. Suppose exactly one of $f$ and $g$ is pseudo-pretentious. 
%such that there is $k\in\mb{N}$, a fixed Dirichlet character $\chi$ and $t \in \mb{R}$ such that $f^k$ is $\chi n^{it}$-pretentious. 
Then $\overline{\{(f(n),g(n+1))\}_n}=\mb{T}^2.$
\end{prop}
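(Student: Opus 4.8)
The plan is to assume WLOG that $f$ is pseudo-pretentious while $g$ is not, and to show that the orbit closure $\overline{\{(f(n),g(n+1))\}_n}$ fills out all of $\mb{T}^2$. Since $f$ is pseudo-pretentious, there is a minimal $k$, a pseudocharacter $h_1$ modulo some $q_1$ with $h_1^k = \tilde\chi_1$, and a real number $t$ with $\mb{D}(f,h_1n^{it};x) \ll 1$. The key idea is to ``presieve'': fix a target $(z,w) \in \mb{T}^2$ and $\delta > 0$, and look for $n$ in a set $\mc{A}_{N,1}(h_1,h_2';\alpha,\beta)$-type family (with $h_2' \equiv 1$ playing no role, or simply working with level sets of $h_1$ alone) on which $h_1(n)$ is pinned to a prescribed root of unity $\alpha \in \mu_{k}$ and $n^{it}$ lies in a short arc, so that $f(n)$ is forced into a small arc around $\alpha e(\text{arc})$ covering a $\delta$-neighbourhood of $z$ as $\alpha$ ranges over $\mu_k$ and the arc slides (this uses Proposition \ref{ONES}a, together with the fact that $n^{it}$ equidistributes in a short arc on a positive-density subset; if $t=0$ one instead varies only over $\alpha\in\mu_k$ and uses that $f$ is $1$-pretentious to $h_1$, hence $f(n)=h_1(n)(1+o(1))$ on a density-$1$ set). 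On this positive-logarithmic-density set $\mc{A}$ of $n$, the value $f(n)$ is essentially determined (up to $O(\delta)$), so it remains to show that $g(n+1)$ gets arbitrarily close to $w$ for \emph{some} such $n$.

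The heart of the argument is the second step: because $g$ is \emph{not} pseudo-pretentious, in particular $g$ is non-pretentious (indeed $g^j$ is non-pretentious to every $\chi n^{it}$ for every $j \geq 1$ — one must be slightly careful here, but the failure of pseudo-pretentiousness of $g$ means $g$ is not pretentious to any $hn^{it}$ with $h$ a pseudocharacter, which includes all $\chi n^{it}$). I would show that the restriction of $\{g(n+1)\}$ to the set $\mc{A}$ is still equidistributed in $\mb{T}$. Concretely, for each fixed nonzero integer $a$, I want
\[
\mb{E}^{\log}_{n \leq x} 1_{\mc{A}}(n)\, g(n+1)^a = o(1),
\]
which, combined with the lower bound $\mb{E}^{\log}_{n\le x}1_{\mc{A}}(n) \gg_\delta \Phi_N(x)/x \gg_N 1$ from Proposition \ref{ONES}, and Weyl's criterion / the Erd\H{o}s–Tur\'an inequality, forces $\{g(n+1)\}_{n \in \mc{A}}$ to be equidistributed and in particular to hit every arc. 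To prove the displayed bound, expand $1_{\mc{A}}(n)$ using the orthogonality identities for $1_{h_1(n)=\alpha}$ (as in the proof of Lemma \ref{TWIST}) and the Beurling-polynomial minorant/majorant for the archimedean constraint $n^{it} \in I$; this reduces matters to bounding sums of the shape $\sum_{n\le x} h_1(n)^j n^{-imt} g(n+1)^a / n$ with $1 \le |m| \le K$ (or $m = 0$). Each such sum is a binary correlation $\sum_n \phi(n)\psi(n+1)/n$ with $\psi = g^a$ non-pretentious, hence is $o(\log x)$ by Theorem \ref{TAOBINTHM} (Tao's theorem), regardless of whether the first factor $\phi(n) = h_1(n)^j n^{-imt}1_{P^-(n)>N}$ is pretentious or not, since the theorem only requires \emph{one} of the two factors to be non-pretentious. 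Summing over the $O(K)$ values of $m$ and letting $K = K(x) \to \infty$ slowly gives the claim.

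Finally I would combine the two steps: given $(z,w) \in \mb{T}^2$ and $\e > 0$, choose $\delta$ small, choose the root of unity $\alpha \in \mu_k$ and the arc $I$ so that $\{f(n) : n \in \mc{A}\}$ lies within $\e/2$ of $z$, and then invoke the equidistribution of $\{g(n+1) : n \in \mc{A}\}$ to find $n \in \mc{A}$ with $|g(n+1) - w| < \e/2$. Such an $n$ satisfies $\|(f(n),g(n+1)) - (z,w)\|_{\ell^1} < \e$; since $(z,w)$ and $\e$ were arbitrary, $\overline{\{(f(n),g(n+1))\}_n} = \mb{T}^2$.

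\textbf{Main obstacle.} The step I expect to be most delicate is making the presieving on level sets of $h_1$ \emph{compatible} with the equidistribution of $g(n+1)$: one needs the same set $\mc{A}$ (a level set of $h_1$ intersected with a short archimedean arc and with $P^-(n(n+1)) > N$) to simultaneously control $f(n)$ tightly and leave $g(n+1)$ equidistributed. The control of $f(n)$ on $\mc{A}$ is only up to $O(\delta)$ error coming from the $1$-pretentiousness of $f$ to $h_1 n^{it}$ off a sparse exceptional set, so one must check this exceptional set does not swallow $\mc{A}$ — this is where the positive-density lower bound of Proposition \ref{ONES} and a Turán–Kubilius / Halász-type bound on $\sum_{n\le x}|f(n)\overline{h_1(n)n^{it}} - 1|/n = o(\log x)$ are combined. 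Secondarily, one must handle the bookkeeping when $t = 0$ (no archimedean arc to slide) versus $t \neq 0$ separately, and confirm that the non-pretentiousness hypothesis on $g$ is strong enough to feed into Theorem \ref{TAOBINTHM} for all relevant twisted factors.
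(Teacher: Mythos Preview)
Your overall strategy---presieve on level sets of $h_1$ and an archimedean arc to pin down $f(n)$, then use Tao's theorem (with the non-pseudo-pretentious factor $g^a$) to equidistribute $g(n+1)$ on that set---is exactly the engine the paper uses. The expansion of $1_{\mc{A}}$ via orthogonality and Beurling polynomials, and the appeal to Theorem~\ref{TAOBINTHM} for each resulting correlation, all go through as you sketch.

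The gap is in how you force $f(n)$ close to an \emph{arbitrary} target $z$. Your plan is to choose $\alpha\in\mu_{kr}$ and an arc $I$ so that $\alpha\cdot I$ covers a $\delta$-neighbourhood of $z$, and then restrict to $\{n:h_1(n)=\alpha,\ n^{it}\in I,\ P^-(n(n+1))>N\}$. This works when $t\neq 0$, but when $t=0$ the quantity $h_1(n)n^{it}=h_1(n)$ only ranges over the finite set $\mu_{kr}$, so no choice of $\alpha$ and $I$ brings $f(n)\approx h_1(n)$ near a generic $z\in\mb{T}$. Your proposed fix for $t=0$ (``vary $\alpha\in\mu_k$ and use $f(n)=h_1(n)(1+o(1))$'') therefore does not reach arbitrary $z$; density of $\{f(n)\}_n$ in this case must come from primes where $f$ and $h_1$ disagree (necessarily a thin set, possibly all below $N$), and your sieving has discarded exactly those. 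The paper handles this uniformly by a multiplicative shift: using the (ambient) density of $\{f(n)\}_n$, it first fixes $n_j$ with $f(2n_j)$ close to $z$, and then restricts to $n=2n_j m$ with $P^-(m(2n_j m+1))>N$, $h_1(m)=1$, $m^{it}\in I$ about $1$. On this set $f(n)=f(2n_j)f(m)\approx z\cdot h_1(m)m^{it}\approx z$ regardless of whether $t=0$, and your equidistribution argument for $g(2n_j m+1)$ then applies verbatim. Inserting this shift into your argument closes the gap.
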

%Proposition \ref{NOZEROH} clearly implies that if $\{(f(n),g(n+1))\}_n$ is not dense than either both $f$ and $g$ are pseudo-pretentious, or else neither of them are. Proposition \ref{REDUCE} will then conclude that only the first alternative is possible. \\
%Define
%\begin{equation*}
%\mf{P}(f;x) := \prod_{p \leq x} \left(1-\frac{1}{p}\right) \sum_{k \geq 0} \frac{f(p^k)}{p^k} = \prod_{p \leq x} \left(1-\frac{1}{p}\right)\left(1-\frac{f(p)}{p}\right)^{-1}.
%\end{equation*}
To prove Proposition \ref{NOZEROH}, we shall need a concentration estimate , showing that if a completely multiplicative function $F: \mb{N} \ra \mb{T}$ is 1-pretentious then $F(n)$ is roughly constant for \emph{most} $n \leq x$. This will be used in the proof of Lemma \ref{CHEB} below, which will allow us to approximate pseudo-pretentious functions pointwise on a positive upper density subset of integers $n$ with $P^-(n(Bn+1)) > N$ (with $B$ and $N$ fixed).  \\
Given a completely multiplicative function $h : \mb{N} \ra \mb{T}$ and $N \geq 1$, set $$\mf{I}_h(x;N) := \sum_{N < p \leq x} \frac{\text{Im}(h(p))}{p}.$$
\begin{lem} \label{SMALLDEV}
Let $N,B \geq 1$ with $2|B$. Then as $x \ra \infty$, %\footnote{To be clear, the implicit constant here is independent of $N$ and $B$ when $x$ is large.} 
\begin{equation*}
\sum_{n \leq x \atop P^-(n(Bn+1)) > N} \left|f(n)-e^{i\mf{I}_f(x;N)}\right|^2 \ll \Phi_{N,B}(x)\left(\mb{D}(f,1;N,x)^2+ \frac{1}{N}\right) + 4^{\pi(N)}x\frac{\log_2 x}{\log x}.
\end{equation*}
The same estimate holds when $f(n)$ is replaced by $f(Bn+1)$.
\end{lem}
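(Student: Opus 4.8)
The plan is to run a standard Turán--Kubilius / second-moment argument adapted to the sieved set of $n$ with $P^-(n(Bn+1))>N$. Write $\Phi := \Phi_{N,B}(x)$ for brevity. Expanding the square, we have
\[
\sum_{n \leq x \atop P^-(n(Bn+1)) > N} \left|f(n)-e^{i\mf{I}_f(x;N)}\right|^2 = 2\,\text{Re}\!\left(\Phi - e^{-i\mf{I}_f(x;N)}\!\!\sum_{n \leq x \atop P^-(n(Bn+1)) > N}\!\! f(n)\right),
\]
so it suffices to show that $\sum_{n\le x,\,P^-(n(Bn+1))>N} f(n) = e^{i\mf{I}_f(x;N)}\Phi + (\text{error})$ with the error of the stated size. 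I would first reduce to the case where $f$ is supported on primes $>N$: since $P^-(n)>N$ on our set, $f(n)$ depends only on the values of $f$ at primes exceeding $N$; thus we may replace $f$ by the completely multiplicative function $f_N$ agreeing with $f$ on primes $>N$ and equal to $1$ on primes $\le N$, and note $\mf{I}_{f_N}(x) = \mf{I}_f(x;N)$ and $\mb{D}(f_N,1;x) \asymp \mb{D}(f,1;N,x)$. So the claim becomes a statement about the mean of a 1-pretentious function $f_N$ over the sifted set $\{n\le x: P^-(n(Bn+1))>N\}$.

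The heart of the matter is to show that for a completely multiplicative $F:\mb{N}\to\mb{T}$ with $\mb{D}(F,1;x)\ll 1$ (here $F=f_N$), one has
\[
\frac{1}{\Phi_{N,B}(x)}\sum_{n \leq x \atop P^-(n(Bn+1)) > N} F(n) \;=\; e^{i\,\mf{I}_F(x)} + O\!\left(\mb{D}(F,1;x)+ N^{-1/2}\right) + \text{(tail terms)}.
\]
To do this I would decompose $F$ multiplicatively using that it is pretentious to $1$: write $F = F_1 \cdot E$ where $F_1$ is a ``main'' factor capturing $e^{i\mf{I}_F(x)}$ and $E$ is a small perturbation, or more cleanly, apply a Lipschitz/stability estimate for pretentious multiplicative functions (in the spirit of the Granville--Soundararajan / Halász machinery, or the mean-value comparison lemmas from \cite{RIG}) to compare $\sum F(n)$ against $\sum (n^{i\theta}\text{-type})$ and against the constant $e^{i\mf{I}_F(x)}$. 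The presence of the linear-sieve weight $1_{P^-(n(Bn+1))>N}$ is handled by opening it via the Fundamental Lemma of the sieve (exactly as in Lemma \ref{TRIV}): this produces a sum over a product $\prod_{p\le N}$ of local densities, contributes the factor $\delta_{N,B}$ matching $\Phi_{N,B}(x)/x$ (cf. \eqref{DELTNBBD}), and leaves an error of size $4^{\pi(N)}$ per residue class, which upon summing over the $O(4^{\pi(N)})$ relevant classes and combining with the trivial bound on the leftover gives the $4^{\pi(N)} x \frac{\log_2 x}{\log x}$ term (the $\frac{\log_2 x}{\log x}$ savings coming from the fact that $n$ is restricted to have no prime factors in $(N, x^{o(1)}]$, an elementary count).

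The main obstacle I anticipate is the interaction between the sieve localization and the pretentious mean-value estimate: one cannot simply multiply ``sieve density $\times$ average of $F$'' because $F$ is not independent of the event $\{P^-(Bn+1)>N\}$ in the needed uniformity. The cleanest route is to do the sieving first, reducing to residue classes $n\equiv a\ (q)$ with $q = \prod_{p\le N, p\nmid B}p$ and then estimating $\sum_{n\le x,\, n\equiv a(q)} F(n)$ for each such class via a Halász-type bound in arithmetic progressions, exploiting $\mb{D}(F,1;x)\ll 1$ together with the fact that $F$ restricted to primes $>N$ is coprime to $q$, so that the twist by characters mod $q$ stays non-pretentious and contributes negligibly (just as in the proof of Lemma \ref{TWIST} via \eqref{BYPERIOD}). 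Summing the resulting main terms $\delta_{N,B,q}\,e^{i\mf{I}_F(x)}$ over the $\phi(q)$-many classes, using $\sum_{a}1 \cdot \delta_{N,B,q} = \delta_{N,B} \asymp \Phi_{N,B}(x)/x$ and absorbing the accumulated $4^{\pi(N)}$-type errors, yields the bound. Finally, the identical argument with $f(Bn+1)$ in place of $f(n)$ goes through verbatim after swapping the roles of the two linear forms $n$ and $Bn+1$ in the sieve (the set $\mc{A}_{N,B}$ being symmetric in this respect), and noting $\mf{I}_f(x;N)$ is unchanged since $Bn+1$ ranges over a set of integers with the same prime-support constraint.
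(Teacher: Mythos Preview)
Your reduction to a first-moment estimate is sound in principle, but as written it loses a power of $\mb{D}$. From the identity
\[
\sum_{n} \left|f(n)-e^{i\mf{I}}\right|^2 = 2\Phi\left(1 - \text{Re}\Bigl(e^{-i\mf{I}}\cdot\tfrac{1}{\Phi}\sum_n f(n)\Bigr)\right),
\]
a mean-value asymptotic with additive error $O(\mb{D}(F,1;x)+N^{-1/2})$, which is what you display, only yields $\sum|f-e^{i\mf{I}}|^2 \ll \Phi\cdot\mb{D}$, whereas the lemma claims $\ll \Phi\cdot\mb{D}^2$. To rescue the approach you would need the normalized mean to equal $e^{i\mf{I}}\bigl(1+O(\mb{D}^2+N^{-1})\bigr)$, a second-order accurate asymptotic. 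This is morally true (the Euler product heuristic gives $\text{mean}\approx e^{-\mb{D}^2+i\mf{I}}$), but it is not delivered by Hal\'asz-type bounds, which give upper bounds on $|\sum f|$ rather than signed asymptotics with $O(\mb{D}^2)$ precision; the tools you name (Hal\'asz in APs, Lipschitz comparison) would need substantial sharpening here. Your explanation of the $4^{\pi(N)}x\log_2 x/\log x$ term is also off: $P^-(n)>N$ does not forbid prime factors in $(N,x^{o(1)}]$.

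The paper avoids this precision issue entirely by taking a different route: it passes to the \emph{additive} function $h(p^k):=f(p^k)-1$, writes $f(n)=e^{h(n)}+O\bigl(\sum_{p^k\|n}(1-\text{Re}\,f(p^k))\bigr)$, and runs a Tur\'an--Kubilius second-moment computation for $h$ directly over the sifted set $\{n:P^-(n(Bn+1))>N\}$, using Lemma~\ref{TRIV} to count $n$ divisible by $p^k$ or by $p^kq^l$. The variance $\sigma_h(x)^2\ll\mb{D}(f,1;N,x)^2+N^{-1}$ then falls out automatically, and the $4^{\pi(N)}x\log_2 x/\log x$ term arises from the crude bound $\pi_2(x)\ll x\log_2 x/\log x$ on the off-diagonal error. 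The phase $e^{i\mf{I}_f(x;N)}$ enters only at the very end via the elementary comparison $|e^{\mu_h(x)}-e^{i\mf{I}_f(x;N)}|^2\ll\mb{D}^2+N^{-1}$. So rather than proving a sharp first-moment asymptotic and deducing concentration, the paper proves concentration about $e^{\mu_h(x)}$ directly and then shifts the centre; this is both shorter and more robust, and is the standard method behind results of this type (cf.\ Proposition~2.3 of \cite{Klu}).
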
\begin{proof}
This is a simple extension of Proposition 2.3 in \cite{Klu}, but we give the details for the reader's convenience. \\
The claim is trivial if $\mb{D}(f,1;N,x) \geq 1$. Thus, in what follows we shall assume that $\mb{D}(f,1;N,x) < 1$. Define an additive function $h: \mb{N} \ra \mb{C}$ on prime powers via $h(p^k) := f(p^k)-1.$ For each $n \in \mb{N}$,
$$f(n) = \prod_{p^k||n} f(p^k) = \prod_{p^k||n} \left(1+h(p^k) \right) = e^{h(n)} + O\left(2\sum_{p^k||n} (1-\text{Re}(f(p^k)))\right).$$
Summing over all $n \leq x$ with $P^-(n(Bn+1)) > N$, we get
\begin{align*}
\sum_{n \leq x \atop P^-(n(Bn+1)) > N} |f(n)-e^{i\mf{I}_f(x;N)}|^2 &\ll \sum_{n \leq x\atop P^-(n(Bn+1))>N} |e^{h(n)} - e^{i\mf{I}_f(x;N)}|^2 \\
&+ \sum_{p^k \leq x} (1-\text{Re}(f(p^k)))\sum_{n \leq x \atop P^-(n(Bn+1)) > N} 1_{p^k||n} =: T_1 + T_2.
\end{align*}
Using Lemma \ref{TRIV}, we have
\begin{align*}
T_2 &= \sum_{p^k \leq x \atop p >N} (1-\text{Re}(f(p^k))) \left(\sum_{m \leq x/p^k \atop P^-(m(Bp^km+1)) > N} 1-\sum_{m \leq x/p^{k+1} \atop P^-(m(Bp^{k+1}m+1))>N} 1\right) \\
&= x\prod_{p'|B} \left(1-\frac{1}{p'}\right) \prod_{3 \leq p'' \leq N} \left(1-\frac{2}{p''}\right) \sum_{p^k \leq x \atop p > N} \frac{1-\text{Re}(f(p^k))}{p^k}\left(1-\frac{1}{p}\right)^{1+1_{p\nmid B}} + O\left(4^{\pi(N)}\pi(x)\right) \\
&\ll \Phi_{N,B}(x) \left(\mb{D}(f,1;N,x)^2 + \frac{1}{N}\right) + 4^{\pi(N)}\frac{x}{\log x}.
\end{align*}
We next treat $T_1$. Define
$$\mu_h(x) := \sum_{p^k \leq x \atop p > N} \frac{h(p^k)}{p^k}(1-1/p)^{1+1_p\nmid B},$$
which arises as the mean value of $h(n)$ in the estimate
$$\sum_{n \leq x \atop P^-(n(Bn+1)) > N} h(n) = \mu_h(x)\Phi_{N,B}(x) + O\left(4^{\pi(N)}\pi(x)\right),$$
using the same argument as was used to bound $T_2$. Since $\text{Re}(h(n)) \leq 0$ for all $n$, it follows that $\text{Re}(\mu_h(x)) \leq 0$, and we thus have
\begin{equation}\label{EMUH}
\sum_{n \leq x \atop P^-(n(Bn+1))>N} |e^{h(n)} - e^{\mu_h(x)}|^2 \leq \sum_{n \leq x \atop P^-(n(Bn+1))>N} |h(n)-\mu_h(x)|^2.
\end{equation}
Following the usual proof of the Tur\'{a}n-Kubilius inequality (e.g., as in Section III.2 of \cite{Ten2}), we have
\begin{align*}
&\sum_{n \leq x \atop P^-(n(Bn+1))>N} |h(n)-\mu_h(x)|^2 = \sum_{n \leq x \atop P^-(n(Bn+1)) > N} |h(n)|^2 - \Phi_{N,B}(x)|\mu_h(x)|^2 + O\left(4^{\pi(N)}|\mu_h(x)|\pi(x)\right) \\
&= \sum_{n \leq x \atop P^-(n(Bn+1)) > N} \sum_{p^k,q^l||n} h(p^k)\bar{h(q^l)} - \Phi_{N,B}(x)|\mu_h(x)|^2 + O\left(4^{\pi(N)}|\mu_h(x)|\pi(x)\right).
\end{align*}
Denote by $\Sigma$ the double sum in this last expression. Splitting the terms $p =q$ from $p \neq q$, $\Sigma$ can be estimated with Lemma \ref{TRIV} to give
\begin{align*}
\Sigma &= \sum_{p^kq^l \leq x \atop p \neq q, p,q > N} h(p^k)\bar{h(q^l)} \sum_{m \leq x/(p^kq^l) \atop P^-(m(Bp^kq^lm+1))>N} 1 + \sum_{p^k \leq x \atop p > N} |h(p^k)|^2\sum_{m \leq x/p^k \atop P^-(m(Bp^km+1))>N} 1 \\
&= \Phi_{N,B}(x)\sum_{p^k q^l \leq x \atop p \neq q, p,q > N} \frac{h(p^k)\bar{h(q^l)}}{p^kq^l} \left(1-\frac{1}{p}\right)^{1+1_{p\nmid B}}\left(1-\frac{1}{q}\right)^{1+1_{q\nmid B}} \\
&+ \Phi_{N,B}(x) \sum_{p^k \leq x \atop p > N} \frac{|h(p^k)|^2}{p^k}\left(1-\frac{1}{p}\right)^{1+1_{p\nmid B}} + O\left(4^{\pi(N)}\pi_2(x)\right),
\end{align*}
Furthermore, 
\begin{align*}
&\sum_{p^kq^l \leq x \atop p \neq q, p,q >N} \frac{h(p^k)\bar{h(q^l)}}{p^kq^l}\left(1-\frac{1}{p}\right)^{1+1_{p\nmid B}}\left(1-\frac{1}{q}\right)^{1+1_{q\nmid B}} \\
&= |\mu_h(x)|^2 + O\left(\sum_{p^k,p^l \leq x \atop p > N} \frac{|h(p^k)||h(p^l)|}{p^{k+l}} + \sum_{p^k,q^l \leq x \atop p^kq^l > x, p,q >N} \frac{|h(p^k)||h(q^l)|}{p^kq^l}\right) \\
&= |\mu_h(x)|^2 + O\left(\sg_h(x)^2 + x^{-1/2}\sg_h(x)^2\right),
\end{align*}
where we defined $$\sg_h(x)^2 := \sum_{p^k \leq x \atop p >N} \frac{|h(p^k)|^2}{p^k}\left(1-\frac{1}{p}\right)^{1+1_{p\nmid B}},$$ and used Cauchy-Schwarz in the last line. It follows that 
$$\Sigma \ll \Phi_{N,B}(x)\sg_h(x)^2 + 4^{\pi(N)} \pi_2(x) \ll \Phi_{N,B}(x)\sg_h(x)^2 + 4^{\pi(N)} x\frac{\log_2 x}{\log x},$$ 
and hence
\begin{align}
\sum_{n \leq x \atop P^-(n(Bn+1)) > N} |h(n)-\mu_h(x)|^2 &\ll \sg_h(x)^2\Phi_{N,B}(x) + 4^{\pi(N)}\left(x\frac{\log_2 x}{\log x} + |\mu_h(x)|\pi(x)\right) \nonumber\\
&\ll \left(\mb{D}(f,1;N,x)^2+\frac{1}{N}\right)\Phi_{N,B}(x) + 4^{\pi(N)}x\frac{\log_2 x}{\log x},\label{TKHERE}
\end{align}
using $|\mu_h(x)| \leq 2\sum_{p^k \leq x} 1/p^k \leq 2\log_2 x + O(1)$ and the estimate
$$\sg_h(x)^2 \leq \sum_{p^k \leq x \atop p>N} \frac{|1-f(p^k)|^2}{p^k} \ll \sum_{p^k \leq x \atop p>N} \frac{1-\text{Re}(f(p^k))}{p^k} = \mb{D}(f,1;N,x)^2 + \frac{1}{N}$$
in the last line. Inserting \eqref{TKHERE} into \eqref{EMUH}, and using this in the definition of $T_1$, we get
\begin{align} 
&\sum_{n \leq x \atop P^-(n(Bn+1)) > N} |e^{h(n)}-e^{i\mf{I}_f(x;N)}|^2 \ll \sum_{n \leq x \atop P^-(n(Bn+1)) > N} |e^{h(n)}-e^{\mu_h(x)}|^2 + \sum_{n \leq x \atop P^-(n(Bn+1))>N} |e^{\mu_h(x)} - e^{i\mf{I}_f(x;N)}|^2 \nonumber\\
&\ll \Phi_{N,B}(x)|e^{\mu_h(x)} - e^{i\mf{I}_f(x;N)}|^2+\Phi_{N,B}(x)\left(\mb{D}(f,1;N,x)^2 + \frac{1}{N}\right) + 4^{\pi(N)} x\frac{\log_2 x}{\log x}. \label{NEARLY}
\end{align}
%As in the proof of Proposition 2.3 of \cite{Klu} (with $\tilde{f}_N$ in place of $f$), we have
%$$|e^{\mu_h(x)}-1|^2 \ll \sum_{p^k \leq x \atop p >N} \frac{1-\text{Re}(f(p^k))}{p^k} + \frac{1}{(\log x)^2} \ll \mb{D}(f,1;N,x)^2 + \frac{1}{N}.$$
Moreover, combining all of the above we get
%$$e^{\mu_h(x)} = \exp\left(-\sum_{p \leq x} \frac{1-f(p)}{p} + O\left(\sum_{N < p \leq x}\sum_{k \geq 2} \frac{1}{p^k}\right)\right) = \exp\left(i\sum_{N< p \leq x} \frac{\text{Im}(f(p))}{p} - \mb{D}(f,1;N,x) + O\left(\frac{1}{N}\right)\right)$$
%and that $\mb{D}(f,1;x) \leq 1$ by the assumption we made at the beginning of the proof, we have
$$|e^{i\mf{I}_f(x;N)}-e^{\mu_h(x)}|^2 \ll \frac{1}{N} + \left|e^{i\mf{I}_f(x;N)}-e^{i\mf{I}_f(x;N)-\sum_{N < p \leq x} \frac{1-\text{Re}(f(p))}{p}}\right|^2 \ll \frac{1}{N} + \mb{D}(f,1;N,x)^2.$$
Coupled with \eqref{NEARLY}, this completes the proof of the first assertion of the lemma.\\
The assertion with $f(Bn+1)$ in place of $f(n)$ is proved similarly, and we leave the details for the reader. %It is worthwhile mentioning that, following the above \emph{mutatis mutandis}, the upper bound term $\mb{D}(f,1;N,Bx)^2$, rather than $\mb{D}(f,1;N,x)^2$ should occur; however, these differ by $(\log B)/\log x$, which can be neglected for $x$ sufficiently large in terms of $B$.
%&\ll \Phi_{N,B}(x)\left(|e^{\mu_h(x)}-\mf{P}(\tilde{f}_
\end{proof}
%It is worthwhile mentioning that, following the above \emph{mutatis mutandis}, the upper bound term $\mb{D}(f,1;N,Bx)^2$, rather than $\mb{D}(f,1;N,x)^2$ should occur; however, these differ by $(\log B)/\log x$, which can be neglected for $x$ sufficiently large in terms of $B$.
%&\ll \Phi_{N,B}(x)\left(|e^{\mu_h(x)}-\mf{P}(\tilde{f}_
%The previous lemma yields the following estimate, allowing us to control pseudo-pretentious functions at ``most'' $n$ with $P^-(n(Bn+1)) > N$.
\begin{lem}\label{CHEB}
Let $\eta > 0$ and $C \geq 1$. Let $B \geq 1$ be an even integer. Suppose furthermore that $f: \mb{N} \ra \mb{T}$ is a pseudo-pretentious multiplicative function, with $f$ pretending to be $h(n)n^{it}$ with $h_1$ a pseudocharacter and $t \in \mb{R}$. Then there is an infinite sequence $\{x_j\}_j$ of positive real numbers and a large parameter $N$ (depending at most on $\eta$) such that if $j = j(N)$ is chosen sufficiently large then
$$f(n) = h(n)n^{it} + O(\eta)$$ 
for all but $O_C(\eta^C\Phi_{N,B}(x_j))$ choices of $n \leq x_j$ with $P^-(n(Bn+1)) > N$.
\end{lem}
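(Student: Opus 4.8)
\textbf{Proof proposal for Lemma \ref{CHEB}.}

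The plan is to deduce the pointwise approximation from the mean-square estimate of Lemma \ref{SMALLDEV} combined with a suitable choice of scales $x_j$ along which the phase $e^{i\mf{I}_f(x_j;N)}$ matches $n^{it}$ (up to the pseudocharacter twist). First I would write $f = h \cdot F$, where $F := f\bar{h}$; since $f$ pretends to be $hn^{it}$ and $h$ is unimodular completely multiplicative, $F$ is a completely multiplicative function with $\mb{D}(F, n^{it}; \infty) < \infty$, i.e. $F$ is $n^{it}$-pretentious. Equivalently, $G(n) := F(n)n^{-it}$ satisfies $\mb{D}(G,1;\infty) < \infty$. The point of passing to $G$ is that Lemma \ref{SMALLDEV} applies to it: for $N$ large (depending on $\eta$) we can make $\mb{D}(G,1;N,x)^2 + 1/N$ as small as we like, say $\leq \eta^{C+2}$, uniformly in $x$, because $\mb{D}(G,1;\infty) < \infty$ forces the tail $\mb{D}(G,1;N,x)^2 = \sum_{N<p\leq x}(1-\text{Re}\,G(p))/p$ to be small once $N$ is large.

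Next I would fix such an $N$ and apply Lemma \ref{SMALLDEV} to $G$ with this $B$: for all $x$,
\begin{equation*}
\sum_{n \leq x \atop P^-(n(Bn+1)) > N} \left|G(n) - e^{i\mf{I}_G(x;N)}\right|^2 \ll \Phi_{N,B}(x)\,\eta^{C+2} + 4^{\pi(N)} x \frac{\log_2 x}{\log x}.
\end{equation*}
Since $\Phi_{N,B}(x) \asymp_{N} x$ by Lemma \ref{TRIV} (with $a=1$, $q=1$), the second error term is $o_N(\Phi_{N,B}(x))$ as $x \to \infty$; so we may pick a sequence $x_j \to \infty$ along which the right-hand side is $\leq 2C_0\, \eta^{C+2} \Phi_{N,B}(x_j)$ for an absolute constant $C_0$, provided $j$ is large enough in terms of $N$ (hence in terms of $\eta$). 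Now Chebyshev/Markov: the number of $n \leq x_j$ with $P^-(n(Bn+1))>N$ and $|G(n) - e^{i\mf{I}_G(x_j;N)}| > \eta$ is at most $\eta^{-2} \cdot 2C_0 \eta^{C+2}\Phi_{N,B}(x_j) = 2C_0\, \eta^{C}\Phi_{N,B}(x_j) \ll_C \eta^C \Phi_{N,B}(x_j)$. For all remaining such $n$ we have $G(n) = e^{i\mf{I}_G(x_j;N)} + O(\eta)$, hence $F(n) = n^{it}(e^{i\mf{I}_G(x_j;N)} + O(\eta))$ and $f(n) = h(n)F(n) = h(n)n^{it}\,e^{i\mf{I}_G(x_j;N)} + O(\eta)$.

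It remains to absorb the constant phase $e^{i\mf{I}_G(x_j;N)}$, i.e. to arrange $\mf{I}_G(x_j;N) \in 2\pi\Z + O(\eta)$ along a subsequence. This is where some care is needed, and I expect it to be the main (minor) obstacle. One clean way: since $|\mf{I}_G(x;N)|$ is bounded as a function of $x$ (its increments are $O(1/p)$ and $\sum_{p>N}|\text{Im}\,G(p)|/p \ll \mb{D}(G,1;N,\infty) + 1 \ll 1$, in fact small once $N$ is large), the set $\{\mf{I}_G(x_j;N) \bmod 2\pi\}_j$ has a limit point $\theta_0$; thinning the sequence $\{x_j\}$ we may assume $\mf{I}_G(x_j;N) \to \theta_0 \pmod{2\pi}$, and in fact — because once $N$ is large the quantity $\mf{I}_G(x;N)$ lies in a small neighbourhood of $0$ for \emph{all} $x$ — we get $\theta_0 = O(\eta)$ directly, so $e^{i\mf{I}_G(x_j;N)} = 1 + O(\eta)$ for $j$ large. (Alternatively, one simply folds the bounded constant phase $e^{i\mf{I}_G(x_j;N)}$ into the statement by noting it contributes a fixed unimodular factor and re-runs the estimate with $h(n)n^{it}e^{i\mf{I}_G(x_j;N)}$ in place of $h(n)n^{it}$; but matching the exact normalization in the Lemma is cleanest as above.) Either way this yields $f(n) = h(n)n^{it} + O(\eta)$ off an exceptional set of size $O_C(\eta^C \Phi_{N,B}(x_j))$, completing the proof. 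The only genuinely delicate point is ensuring the error term $4^{\pi(N)} x \log_2 x/\log x$ is negligible relative to $\Phi_{N,B}(x) \asymp_N x$ along the chosen scales, which holds for all large $x$ once $N$ is fixed — so fixing $N$ first (depending only on $\eta$) and then sending $x_j \to \infty$ is the correct order of quantifiers.
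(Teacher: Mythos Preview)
Your overall strategy matches the paper's: set $G := f\bar{h}n^{-it}$, observe that $G$ is $1$-pretentious, choose $N$ so that $\mb{D}(G,1;N,\infty)^2 + 1/N \ll \eta^{C+2}$, apply Lemma~\ref{SMALLDEV} plus Chebyshev. The gap is entirely in your treatment of the phase $e^{i\mf{I}_G(x;N)}$.

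You assert that $\sum_{p>N}|\text{Im}\,G(p)|/p \ll \mb{D}(G,1;N,\infty)+1$, and hence that $\mf{I}_G(x;N)$ lies in an $O(\eta)$-neighbourhood of $0$ for all $x$ once $N$ is large. This is false. From $|G(p)|=1$ one only gets $|\text{Im}\,G(p)| \leq \sqrt{2(1-\text{Re}\,G(p))}$, so Cauchy--Schwarz gives
\[
\sum_{N<p\leq x}\frac{|\text{Im}\,G(p)|}{p} \ \ll\ \mb{D}(G,1;N,x)\cdot (\log_2 x)^{1/2},
\]
which is unbounded in $x$. Concretely, if $G(p)=e^{i/k}$ for $e^{e^k}<p\leq e^{e^{k+1}}$ then $\mb{D}(G,1;\infty)^2 \asymp \sum_k k^{-2}<\infty$, yet $\mf{I}_G(x;1)\asymp \sum_{k\leq \log_2\log x} k^{-1}\to\infty$. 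So the limit point $\theta_0$ you extract by compactness need not be $O(\eta)$, and your ``alternative'' of folding the phase into the conclusion yields only $f(n)=h(n)n^{it}e^{i\theta_0}+O(\eta)$, which is not the statement of the lemma.

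The paper fixes exactly this point by a short case analysis on the series $\sum_p \text{Im}\,G(p)/p$: if it converges absolutely, the tail past $N$ is already $O(\eta)$; if it diverges, the increments $\mf{I}_G(n+1;N)-\mf{I}_G(n;N)\to 0$ force $\{\mf{I}_G(x;N)/2\pi\}$ to be dense, so one picks $x_j$ with $\mf{I}_G(x_j;N)\equiv 0\pmod{2\pi}$; if it converges conditionally to some $\alpha$, one first takes $x_j$ with $\mf{I}_G(x_j;1)\to\alpha$ and then \emph{sets} $N=x_{j_0}$ for a large $j_0$, so that $\mf{I}_G(x_j;N)=\mf{I}_G(x_j;1)-\mf{I}_G(N;1)\to 0$. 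Inserting any one of these choices into your Chebyshev step completes the argument; without it, the proof does not go through.
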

\begin{proof}
%Let us first construct the sequence $\{x_j\}_j$ in question. We will then show that the condition on $f$ is then satisfied with this choice. \\
We consider several cases, according to the behaviour of the series 
$$\mf{I}_f(\infty) := \lim_{x \ra \infty} \mf{I}_f(x;1) = \sum_p \frac{\text{Im}(f(p))}{p}.$$
First, suppose $\mf{I}_f(\infty)$ converges absolutely. Then, choosing $N$ large enough in terms of $\eta$, it follows that $\mf{I}_f(x;N) \ll \eta$ for all $x > N$. In this case, we shall choose $\{x_j\}_j$ to be the set of all $x > N$. \\
Next, suppose that $\mf{I}_f(\infty)$ diverges. In this case, since $\mf{I}_f(n+1;N)-\mf{I}_f(n;N)\to 0$ as  $n\to\infty,$ it follows that the sequence of fractional parts of $\mf{I}_f(x;N)/2\pi$ must be dense in $[0,1]$, for any $N$. In this case, we may choose any large $N$ and $\{x_j\}_j$ to be a sequence for which $\mf{I}_f(x_j;N)/2\pi \ra 0$, as $j \ra \infty$.\\
Lastly, suppose $\mf{I}_f(\infty)$ converges conditionally. Let $\alpha$ be a limit point of $\mf{I}_f(x;1)$, and choose $\{x_j\}_j$ to be a sequence for which $\mf{I}_f(x_j;N) \ra \alpha$. Picking $j_0$ sufficiently large in terms of $\eta$, then setting $N := x_{j_0}$, it follows that $\mf{I}_f(x_j;N) \ll \eta$ for large $j$. \\  %we will argue that for any $N \in \mb{N}$, the map sending $x > N$ to $\mf{I}_f(x;N) \ (\text{mod} 1)$ is dense in $\mb{R}/\mb{Z}$. We shall then choose $\{x_j\}_j$, depending on $N$, to be a sequence such that $\mf{I}_f(x_j;N) \ra 0$ as $j \ra \infty$. \\
Now, let $F(n) := f(n)\bar{h(n)}n^{-it}$ for each $n$. As $F$ is $1$-pretentious, we can assume $N$ is chosen large enough in terms of $\eta$ (as in the analysis above) so that $\mb{D}(F,1;N,\infty)^2 + \frac{1}{N} \ll \eta^{C+2}.$ Furthermore, as discussed above we have selected a sequence $\{x_j\}_j$ such that $\mf{I}_f(x_j;N)/2\pi \ (\text{mod } 1) \ra 0$. Taking $j$ sufficiently large in terms of $N$, Lemma \ref{SMALLDEV} and Chebyshev's inequality give
\begin{align}\label{EXCEP}
&\left|\left\{n \leq x_j : P^-(n(Bn+1)) > N, |F(n) - e^{i\mf{I}_f(x_j;N)}| > \frac{\eta}{2}\right\}\right| \leq 4\eta^{-2} \sum_{n \leq x \atop P^-(n(Bn+1)) > N} |F(n)-e^{i\mf{I}_f(x_j;N)}|^2\nonumber\\
&\ll \eta^{-2}\Phi_{N,B}(x_j)\left(\mb{D}(f,1;N,x_j)^2+\frac{1}{N}\right) + \eta^{-2}4^{\pi(N)}x_j\frac{\log_2 x_j}{\log x_j} \ll \Phi_{N,B}(x_j) \eta^C.
\end{align}
For all other $n \leq x_j$ with $P^-(n(Bn+1)) > N$ we choose $j$ larger if necessary so that $\|\mf{I}_f(x_j;N)/2\pi\| < \eta/2$, and hence
$$|f(n)-h(n)n^{it}| = |F(n) -1| \leq |F(n)-e^{i\mf{I}_f(x_j;N)}| + |e^{i\mf{I}_f(x_j;N)}-1| < \eta/2 + \eta/2 = \eta,$$ 
which implies the claim.
%As $$\mf{P}(F1_{P^- > N},1;x) = \exp\left(\sum_{p \leq x} \frac{1-\text{Re}(F(p))}{p} + O(1/N)\right) =  1 + O(\mb{D}(F,1;N,x)^2 + 1/N) = 1+ O(\eta^C),$$ it follows that $F(n) = 1 + O(\eta),$ which implies the claim.
\end{proof}
%\begin{lem}\label{CHEB}
%Let $\eta > 0$ and $C \geq 1$. Let $B$ be an even integer, let $N$ be large in terms of $\eta$, and let $x$ be sufficiently large in terms of $N$. Suppose that $f$ is as in the statement of Proposition \ref{NOZEROH}. Then
%$$f(n) = h_1(n)n^{it} + O(\eta)$$ 
%for all but $O_C(\eta^C\Phi_{N,B}(x))$ choices of $n \leq x$ with $P^-(n(Bn+1)) > N$.
%\end{lem}
%\begin{proof}
%Let $F(n) := f(n)\bar{h(n)}n^{-it}$ for each $n$. As $F$ is $1$-pretentious, we can choose $N$ large enough (in terms of $\eta$) so that $\mb{D}(F,1;N,x)^2 + \frac{1}{N} \ll \eta^{C+2}$. Taking $x$ sufficiently large in terms of $N$, Lemma \ref{SMALLDEV} and Chebyshev's inequality give
%\begin{align*}
%&|\{n \leq x : P^-(n(Bn+1)) > N, |F(n) - 1| > \eta/2\}| \nonumber\\
%&\ll \eta^{-2}\Phi_{N,B}(x)\left(\mb{D}(f,1;N,x)^2+\frac{1}{N}\right) + \eta^{-2}x\frac{\log_2 x}{\log x} \ll \Phi_{N,B}(x) \eta^C,
%\end{align*}
%For all other $n$, using the fact that $h_1(n)n^{it}$ takes values in $\mb{T}$, we have 
%$$|f(n)-h_1(n)n^{it}| = |F(n) -1| < \eta/2,$$ 
%which implies the claim.
%As $$\mf{P}(F1_{P^- > N},1;x) = \exp\left(\sum_{p \leq x} \frac{1-\text{Re}(F(p))}{p} + O(1/N)\right) =  1 + O(\mb{D}(F,1;N,x)^2 + 1/N) = 1+ O(\eta^C),$$ it follows that $F(n) = 1 + O(\eta),$ which implies the claim.
%\end{proof}
\begin{proof}[Proof of Proposition \ref{NOZEROH}]
Suppose there is an $\e > 0$ and a pair $(z,w) \in \mb{T}^2$ such that\footnote{Here and elsewehere, we write $B_{\e}((z,w))$ to denote the $\e$-ball about $(z,w)$ in $\mb{C}^2$ with respect to the $\ell^1$-norm.} $(f(n),g(n+1)) \notin B_{\e}((z,w))$ for all large $n$.  Let $h$ be a pseudocharacter such that $f$ is $hn^{it}$-pretentious, and assume it has order $kr$ (i.e., $h: \mb{N} \ra \mu_{kr}$). Select $\{n_j\}_j$ on which $f(2n_j) \ra z$, and hence for $j$ chosen large enough we have $f(2n_j) = z + O(\e^2)$. By Lemma \ref{SMALLDEV}, we can choose $N$ sufficiently large in terms of $\e$ and a sequence $\{x_{\ell}\}_{\ell}$ such that for large enough $\nu$,
$$f(2n_jm) = zh(m)m^{it} + O(\e^2),$$
for all but $O(\e^3\Phi_{N,2n_j}(x_{\ell})$ integers $m \leq x_{\ell}$ with $P^-(m(2n_jm+1)) > N.$ 
Thus, if $\e$ is sufficiently small then for all but a small number of exceptions, we have 
%for all but a density $O(\e^3/(n_jkr\log N))$ set of $mn_j$ with $P^-(m) > N$ we have 
$$(h(m)m^{it},g(2n_jm+1)) \notin B_{2\e/3}((1,w)).$$ %On the other hand, proceeding as in the proof of Proposition \ref{ONES} we observe that if 
To yield a contradiction, we shall presently show that there exists a sufficiently dense subset of integers $m \leq x_{\ell}$ that satisfies the following properties: 
\begin{enumerate}[(i)]
\item $h(m) = 1$, \item $P^-(m(2n_jm+1)) > N$, \item $|g(2mn_j+1)- w| < \e/4$, and \item $|m^{it} - 1| < \e/4$.
\end{enumerate}
The proof of this argument is similar to that of Proposition \ref{ONES}. For convenience, put $X := x_{\ell}$, for some sufficiently large index $\nu$ (depending at most on $\e$). Let $I$ be a symmetric arc of length $\e/4$ about $1$, and let $J$ be a symmetric arc of the same length about $w$. Then 
%Let $r$ denote the order of $\tilde{\chi}$, so that $h_1$ has order $kr$. Note that
\begin{align*}
%\sum_{mn_j \leq x \atop P^-(m) > N} \frac{1_{n_j|n}1_{h_(n/n_j) = 1}1_I(n^{it})1_J(g(n+1))}{n}
\mf{G} &= \sum_{m \leq X \atop P^-(m(2n_jm+1)) > N} \frac{1_I(m^{it})1_{h_1(m) = 1} 1_{J}(g(mn_j+1))}{m}.
% \nonumber\\
\end{align*}
We write the arcs $I$ and $J$ as the images of intervals $[a,b]$ and $[c,d]$ (say) in $\mb{R}$ under the map $t \mapsto e(t)$, such that $[a,b]$ contains an integer (so that $I$ contains $1$). We consider two cases, depending on $t$. \\
First, if $t = 0$ then $1_I(m^{it}) = 1$ for all $m.$ Using the properties of Beurling polynomials, we have minorization
\begin{equation}\label{MINORFORJ}
1_J(g(2n_jm+1)) \geq \left(|J| - \sum_{0 \leq |l| \leq K} \hat{B}_K(l)\left(e(bl)g(2n_jm+1)^{-l} + e(-al)g(2n_jm+1)^l\right)\right),
\end{equation}
for any parameter $K = K(X)$ to be chosen. 
Inserting this into the definition of $\mf{G}$ and summing over $m \leq X$ with the given conditions, we get
\begin{align*}
\mf{G} &\geq |J|\sum_{m \leq X \atop P^-(m(2n_jm+1)) > N} \frac{1_{h_1(m) = 1}}{m} - \sum_{1 \leq |l| \leq K} \hat{B}_K(l) \left(e(dl)M_{-l}(X) + e(-cl)M_l(X)\right),
%\sum_{m \leq X \atop P^-(m(2n_jm+1)) > N}\frac{1_{h(m)=1}g(2n_jm+1)^l}{n}\right).
\end{align*}
where for any $l \in \mb{Z}$ we have put
$$M_l(X) := \sum_{m \leq X \atop P^-(m(2n_jm+1)) > N}\frac{1_{h_1(m)=1}g(2n_jm+1)^{-l}}{n}.$$
Expressing $1_{h_1(m) = 1} = \frac{1}{kr} \sum_{\nu (kr)} h_1(m)^{\nu}$, we have
$$M_l(X) = \frac{1}{kr} \sum_{\nu (kr)} \sum_{m \leq X \atop P^-(m(2n_jm+1)) > N}\frac{h_1(m)^{\nu}g(2n_jm+1)^{-l}}{n} =: \frac{1}{kr} \sum_{0 \leq \nu \leq kr-1} M_{l}(X;\nu),$$
for any $l \in \mb{Z}$. Inserting this into our lower bound for $\mf{G}$, we find
\begin{align}
\mf{G} &\geq \frac{1}{kr} \sum_{0 \leq \nu \leq kr-1} |J|\sum_{m \leq X \atop P^-(m(2n_jm+1)) > N} \frac{h_1(m)^{\nu}}{m} \nonumber\\
&-  \frac{1}{kr} \sum_{0 \leq \nu \leq kr-1} \sum_{1 \leq |l| \leq K} \hat{B}_K(l) \left(e(dl)M_{-l}(X;\nu) + e(-cl)M_{l}(X;\nu)\right) \nonumber\\
&=: \frac{1}{kr} \sum_{0 \leq \nu \leq kr-1} \mc{S}_{\nu}. \label{ALLS}
%&= \frac{1}{kr} \sum_{S \subseteq \{1,\ldots,kr-1\}} (-1)^{|S|} \zeta_{kr}^{\Sigma(S)} \sum_{m \leq x/n_j \atop P^-(m) > N} \frac{1_I(m^{it})h(m)^{|S|}1_J(g(mn_j+1)}{n}. 
\end{align}
%Fix $1\leq \nu \leq kr-1.$
%Inserting this into the definition of $\mc{S}_{\nu}$ and summing over $m \leq X$ with the given conditions, we get
%\begin{align*}
%\mc{S}_{\nu} &- |J|\sum_{m \leq X \atop P^-(m(2n_jm+1)) > N} \frac{h_1(m)^{\nu}}{m} \\
%&- \sum_{1 \leq |l| \leq K} c_l \left(e(bl)\sum_{m \leq X \atop P^-(m(2n_jm+1)) > N}\frac{h_1(m)^{\nu}g(mn_j+1)^{-l}}{n} + e(-al)\sum_{m \leq X \atop P^-(m(2n_jm+1)) > N}\frac{h(m)^{\nu}g(mn_j+1)^l}{n}\right) \\
%&\ll (\log X)/K,
%\end{align*} 
%&\geq |J| \sum_{m \leq x/n_j \atop P^-(m) > N} \frac{h(m)^{\nu}}{m} - 2\sum_{1 \leq |l| \leq K} \frac{1}{l}\left|\sum_{m \leq x/n_j \atop P^-(m)>N} \frac{h(m)^{\nu}(g(mn_j+1)^l + \bar{g(mn_j+1)}^l)}{m}\right| - O\left(\frac{\log x}{K}\right), 
%\label{SEXPAND}
%\end{align*}
%where the term $l = 0$ yields $O((\log X)/K)$ as an upper bound, due to $|c_0| = |\hat{B}_K(0)| \ll 1/K$. 
Fix $0 \leq \nu \leq kr-1$. Now, by hypothesis, $g$ is not pseudo-pretentious, so that $g^{l}1_{P^- > N}$ is non-pretentious for all $l \in \mb{Z} \bk \{0\}$. Hence, Theorem \ref{TAOBINTHM} implies that 
$$M_{l}(X;\nu) = \sum_{m \leq X} \frac{(h1_{P^->N})(m)^{\nu}(g1_{P^->N})(2n_jm+1)^l}{m} = o(\log X),$$
for all $1 \leq |l| \leq K$. Multiplying by $1/|l|$ and summing over $1 \leq l\leq K$ in the above estimate, we get
%$the rightmost terms are all $o(\log x)$, which then yields the estimate
\begin{equation*}
\mc{S}_{\nu} = |J|\sum_{m \leq X \atop P^-(m(2n_jm+1)) > N} \frac{h_1(m)^{\nu}}{m} + o((\log x) (\log K)) + O\left(\frac{\log x}{K}\right),
\end{equation*}
for all $0 \leq \nu \leq kr-1$. Now, if $1 \leq \nu \leq kr-1$ then as $h$ is a pseudocharacter of order $kr$, $h^{\nu}1_{P^->N}$ is \emph{not} $n^{iu}$-pretentious for any (fixed) choice of $u \in \mb{R}$. Thus, Hal\'{a}sz' theorem implies that 
$$\sum_{m \leq X \atop P^-(m(2n_jm+1)) > N} \frac{h_1(m)^{\nu}}{m} = o(\log X)$$
as well. It follows that when $\nu \neq 0$, we have 
$$S_{\nu} = o((\log X)(\log K)) + O\left(\frac{\log X}{K}\right) = o(\log X),$$
if $K = K(X)$ is chosen to be tending to infinity with $X$ sufficiently slowly, and therefore
$$\mf{G} \geq \frac{|J|}{kr}\sum_{m \leq X \atop P^-(m(2n_jm+1)) > N} \frac{1}{m} - o(\log x).$$
%we treat the term $S_0$ momentarily. \\
Next, suppose $t \neq 0.$ Arguing as before, this time invoking a minorization like \eqref{MINORFORJ} with $1_I(n^{it})$ as well and using the triangle inequality, we have (cf. \eqref{ALLS} and the lines preceding it)
\begin{align*}
\mf{G} &\geq \frac{1}{kr} \sum_{0 \leq \nu \leq kr-1} |I||J| \sum_{m \leq X \atop P^-(m(2n_jm+1)) > N} \frac{h_1(m)^{\nu}}{m} \\
&- \frac{1}{kr} \sum_{0 \leq \nu \leq kr-1} \sum_{0 \leq |l_1|, |l_2| \leq m \atop \max\{|l_1|,|l_2|\} \geq 1} \frac{1}{R(l_1,l_2)} \sum_{u,v \in \{-1,+1\}} |M_{ul_1,vl_2}(X;\nu)| \\
&=: \frac{1}{kr}\sum_{0 \leq \nu \leq kr-1} T_{\nu},
\end{align*}
where $R(l_1,l_2) := \max\{1,|l_1|\}\max\{1,|l_2|\}$ whenever $l_1l_2 \neq 0$, and we have set
$$M_{l_1,l_2}(X;\nu) := \sum_{m \leq X \atop P^-(m(2n_jm+1)) >N} \frac{h_1(m)^{\nu} g(2n_jm+1)^{l_1}m^{il_2t}}{m}.$$
%\begin{align*}
%&\left|S_{\nu} - |I||J|\sum_{m \leq x/(2n_j) \atop P^-(m(2n_jm+1)) > N} \frac{h(m)^{\nu}}{m}\right| \ll \frac{1}{K} + \sum_{0 \leq |l_1|, |l_2| \leq K \atop (l_1,l_2) \neq (0,0)} \frac{1}{R(l_1,l_2)}\left|\sum_{m \leq x/(2n_j) \atop P^-(m(2n_jm+1)) > N} \frac{h(m)^{\nu}g(2n_jm+1)^{l_2}}{m^{1-il_1t}}\right|.
%\left(|I|-\sum_{|l| \leq K} d_l(e(-bl)m^{ilt}+e(al)m^{-ilt})\right)\\
%&\cdot \left(|J| - \sum_{|l'| \leq K} c_{k'} (e(-bl')g(mn_j+1)^{l'}+ e(al')\bar{g(mn_j+1)}^{-l'})\right) \\
%&\geq |I||J|\sum_{m \leq x/n_j \atop P^-(m)> N} \frac{h(m)^{|S|}}{m} \\
%&- 2|I| \sum_{1 \leq |l'| \leq K} \frac{1}{l'}\left|\sum_{m \leq x/n_j \atop P^-(m)> N} \frac{h(m)^{|S|}g(mn_j+1)^{l'}}{m}\right| - 2|J| \sum_{1 \leq |l| \leq K}\frac{1}{l} \left|\sum_{m \leq x/n_j \atop P^-(m)> N} \frac{h(m)^{|S|}}{n^{1-ilt}}\right|\\
%&- 4\sum_{1 \leq |l|,|l'| \leq K} \frac{1}{ll'}\left|\sum_{m \leq x/n_j \atop P^-(m)> N} \frac{h(m)^{|S|}m^{ilt}g(mn_j+1)^{l'}}{m}\right|.
%\end{align*}
Since $n \mapsto g(n)^{l_2}1_{P^-(n)>N}n^{il_1t}$ is non-pretentious for all $l_2 \neq 0$, and $n\mapsto h(n)^{\nu}1_{P^-(n)>N}n^{-il_1t}$ is non-pretentious whenever $\max\{|l_1|,|l_2|\} \geq 1$, Theorem \ref{TAOBINTHM} implies that $M_{l_1,l_2}(X;\nu) = o((\log x))$ for all such $l_1,l_2$ and any $\nu$. In particular,
\begin{align*}
T_{\nu} &= |I||J|\sum_{m \leq X \atop P^-(m(2n_jm+1)) > N} \frac{h(m)^{\nu}}{m} + o\left((\log X)\sum_{1 \leq l_1,l_2\leq K} \frac{1}{l_1l_2}\right) \\
&= |I||J|\sum_{m \leq X \atop P^-(m(2n_jm+1)) > N} \frac{h(m)^{\nu}}{m} + o((\log X)(\log K)^2).
\end{align*}
By the arguments above, if $\nu \neq 0$ then the sum of $h^{\nu}$ above is $o(\log x)$ by Hal\'{a}sz' theorem. Hence, we get 
$$\mf{G} \geq |I||J| \sum_{m \leq X \atop P^-(m(2n_jm+1)) > N} \frac{1}{m} - o(\log x),$$
for $K=K(X) \ra \infty$ sufficiently slowly.\\
%Moreover, when $S \neq \emptyset$, Hal\'{a}sz' theorem implies that the first and third set of sums is $o(\log x)$ as well. On the other hand, when $S = \emptyset$, partial summation as in Lemma \ref{TWIST} implies that
%\begin{equation*}
%\sum_{1 \leq |l| \leq K} \frac{1}{l}\left|\sum_{m \leq x/n_j \atop P^-(m) >N} \frac{1}{m^{1-ilt}}\right| \ll \frac{1}{|t|\log N} + K|t| + o((\log K)(\log x)),
%\end{equation*}
%which is $o(\log x)$ in any case when $K$ is sufficiently slow-growing in terms of $x$, and $x$ is sufficiently large in terms of $t$. \\
%To conclude, irrespective of the value of $t$, the foregoing analysis shows that the contribution of all terms with $S \neq \emptyset$ in \eqref{ALLS} are of size $o(\log x)$, and that when $t \neq 0$ the contribution from those terms with non-zero twist are also $o(\log x)$. \\
%When $\nu = 0$, we can minorize $S_0$ using the arguments above. Choosing $K = K(x) \ra \infty$ slowly enough, we get
Invoking Lemma \ref{TRIV} (with $q = 1$) and partial summation, we get that when $X$ is sufficiently large relative to $N$, we get
\begin{equation*}
\sum_{m\leq X \atop P^-(m(2n_jm+1)) > N} \frac{1}{m} = \delta_{N,B}\log (X) + o(\log x), \end{equation*}
upon invoking Lemma \ref{TRIV} (with $q = 1$) and partial summation. Since $|I| \geq |I||J| \gg \e^2$, this shows that 
%$$\mc{S}_0 \gg \frac{\e^2}{kr} \frac{\Phi_{N,B}(x)}{x}\log (x/(2n_j)),$$ and hence
$$\mf{G} \gg \frac{\e^2}{kr} \frac{\Phi_{N,B}(X)}{X}\log X.$$
Hence, assuming (as we may) that $\e \ll (kr)^{-2}$, the set of $n = 2mn_j$ with $h_1(m) = 1$, $|m^{it}-1| < \e/4$, $|g(mn_j+1)-w| < \e/4$ and $P^-(m(2n_jm+1)) > N$ intersects in a set of positive upper density with the set of $n \leq 2n_jX$ where $f(m) = h_1(m)m^{it} + O(\e^2)$. \\
It thus follows that for a set of integers $n$ with positive upper density, we have 
$$|f(n)-z| = |f(m)-1| + O(\e^2) = |h(m)m^{it}-1| + O(\e^2)= |m^{it}-1| + O(\e^2) < \e/3$$
and
$|g(n+1)-w|= |g(2mn_j+1)-1| < \e/4.$ Consequently, $(f(n),g(n+1)) \in B_{2\e/3}((z,w))$ which contradicts our initial assumption. This contradiction completes the proof.
%%& \geq |I||J| \sum_{n \leq x \atop n \in \mc{A}_{N,0}(h,1;1,1)} \frac{1}{n} \\
%&- \frac{|J|}{K+1}\sum_{|j| \leq K} \left|\sum_{n \leq x \atop n \in \mc{A}_{N,0}(h,1;1,1)} \frac{1}{n^{1+ijt}}\right| - \frac{|I|}{K+1}\sum_{|j'| \leq K} \left|\sum_{n \leq x}\frac{1_{h(n) = 1} 1_{P^-(n) > N}g(n+1)^{j'}}{n}\right| \\
%&- \frac{1}{(K+1)^2} \sum_{|j|,|j'| \leq K} \left|\sum_{n \leq x} 1_{h(n) = 1} \frac{1_{P^-(n) > N}g(n+1)^{j'}}{n^{1+iju}}\right|.
%\end{align*}
%Since we can expand $1_{h(n) = 1}$ as a linear combination of powers of $h(n)$, the fact that $g$ is not pseudo-pretentious and Tao's theorem together imply that the third and last terms contribute negligibly. For the remaining sum, we use Lemma \ref{TWIST} to conclude that the second term contributes $o(\log x)$ for $K$ sufficiently large (as in the proof of Proposition \ref{ONES}). Applying Proposition \ref{ONES} with $I = \mb{T}$, we conclude that the set of $n$ such that $h(n) = 1$, $|n^{it}-z| < \e/2$ and such that $|g(n+1)-w| < \e/2$ has positive upper density $\gg \e^2(\log N)^{-1}$. This is a contradiction when $\e$ is sufficiently small. The claim of the proposition follows.
\end{proof}
%\frac{1}{kr}\sum_{S \subset \{1,\ldots,kr-1}(-1)^{|S|}\zeta_{kr}^{|S|}\left(|I||J|\sum_{n \leq x} \frac{h(n)^{|S|}}{n} - |I|\sum_{|j| \leq K} 
Having established the above proposition, we now know that if $\overline{\{(f(n),g(n+1))\}_n} \neq \mb{T}^2$ then either both $f$ and $g$ are pseudo-pretentious, or neither is. To complete the proof of Proposition \ref{REDUCE}, therefore, we shall show that the latter case is not possible.
\begin{proof}[Proof of Proposition \ref{REDUCE}] 
Let us assume for the sake of contradiction that one of $f$ and $g$ are not pseudo-pretentious. Proposition \ref{NOZEROH} implies that, then, both $f$ \emph{and} $g$ are not pseudo-pretentious. Let $a,b: \mb{N} \ra \mb{R}/\mb{Z}$ be completely additive functions for which $f(n) = e(a(n))$ and $g(n) = e(b(n))$. For each $M \in \mb{N}$ let $$F_M(x_1,x_2) := \frac{1}{\log M} \sum_{n \leq M \atop (a(n),b(n+1)) \in [0,x_1]\times [0,x_2]} \frac{1}{n},$$ and $G_M(x_1,x_2) = x_1x_2$ identically for all $M$. Applying Theorem 2 of \cite{NiP} gives that for any $K \geq 1$ we have
%with the distribution functions $G(x) = 1$ for all $x \in \mb{R}$ and $F(x) := \frac{1}{\log x} \sum_{n \leq x \atop b_n \in 
\begin{align}\label{ETAPP}
D_M^{\ast}(\{(f(n),g(n+1))\}_n) &= \sup_{x_1,x_2 \in [0,1)}|F_M(x_1,x_2) - G_M(x_1,x_2)| \\
&\ll \frac{1}{K+1} + \frac{1}{\log M} \sum_{0 \leq |m_1|,|m_2| \leq K \atop (m_1,m_2) \neq (0,0)}\frac{1}{R(m_1,m_2)}\left|\sum_{n \leq M} \frac{f(n)^{m_1}g(n+1)^{m_2}}{n}\right| ,
\end{align}
where $R(m_1,m_2) = \max\{1,|m_1|\}\max\{1,|m_2|\}$ whenever $m_1m_2 \neq 0$. \\
Now, by hypothesis, there is an $\e > 0$ and a point $(z,w) \in \mb{T}^2$ such that $(f(n),g(n+1)) \notin B_{\e}((z,w))$ for all large $n$. In particular, this means that
\begin{align*}
D_M(\{f(n),g(n+1)\}_n) &= \sup_{I,J \subset \mb{T} \atop \text{arcs}} \left|\frac{1}{\log M}\sum_{n \leq M \atop (f(n),g(n+1)) \in I\times J} \frac{1}{n}-|I||J|\right| \\
&\geq \left|\frac{1}{\log M} \sum_{n \leq M \atop (f(n),g(n+1)) \in I\times J} \frac{1}{n}-B_{\e}((z,w))\right| = |I||J| \gg \e^2.
\end{align*}
From \eqref{DISCPROPERTY}, we get $D_M^{\ast}(\{(f(n),g(n+1))\}_n) \gg \e^2.$ Combining this with \eqref{ETAPP} and choosing $K = \llf\frac{C}{\e}\rrf$ with $C > 0$ a sufficiently large constant, the pigeonhole principle implies that for some pair $(m_1,m_2) \neq (0,0)$ with $|m_1|,|m_2| \leq K$
\begin{equation*}
\left|\sum_{n \leq M} \frac{f(n)^{m_1}g(n+1)^{m_2}}{n}\right| \gg_{\e} \log M.
\end{equation*}
Note that since neither $f$ nor $g$ is assumed to be pseudo-pretentious, we must have $m_1m_2 \neq 0$ as otherwise Hal\'{a}sz' theorem would yield
$$\left|\sum_{n \leq M} \frac{f(n)^{m_1}}{n}\right|,\left|\sum_{n \leq M} \frac{g(n)^{m_2}}{n}\right| = o(\log M).$$
Thus, we may apply Proposition \ref{ULTPRETLEM}, 
%By Lemma \ref{NOZEROH}, $m_1m_2 \neq 0$, as otherwise Hal\'{a}sz' theorem would imply that either $f$ (in case $m_1 = 0$) or $g$ (when $m_2 = 0$) is $n^{it}$-pretentious for some $t \in \mb{R}$. Thus, by Theorem \ref{TAOBINTHM}, there are Dirichlet characters $\chi_{1,x},\chi_{2,x}$ and reals $t_{1,x},t_{2,x}$ for which $\mb{D}(f^{m_1},\chi_{1,x}n^{it_{1,x}};x),\mb{D}(g^{m_2},\chi_{2,x}n^{it_{2,x}};x) \ll_{\e} 1$. By Lemmata 2.5 and 2.7 in \cite{RIG}, it follows that there are explicit $\chi_1,\chi_2$ of conductor $q_1,q_2 = O_{\e}(1)$ and real numbers $t_1,t_2 = O_{\e}(1)$ such that $\mb{D}(f^{m_1},\chi_1 n^{it_1};x) \ll_{\e} 1$ and $\mb{D}(g^{m_2},\chi_2n^{it_2};x) \ll_{\e} 1$. 
%Applying Lemma 2.8 of \cite{RIG} then yields 
which gives that $f$ and $g$ are indeed \emph{both} pseudo-pretentious and the result follows.
\end{proof}
\section{Proof of Proposition \ref{RATRATIO}} \label{SECRATRATIO}
%\begin{prop} \label{RATRATIO}
%Suppose that $f(n)^k = n^{it}$ and $g(n)^l = n^{it'}$, for $k,l \in \mb{N}$ and $t,t' \in \mb{R}$. Then $\{(f(n),g(n+1))\}$ is not dense in $\mb{T}^2$ if, and only if, $t/t' \in \mb{Q}$.
%\end{prop}
Before addressing Propositions \ref{EVRAT} and \ref{IRRAT}, we pause to prove Proposition \ref{RATRATIO}, as its proof will be related to several subcases of Proposition \ref{IRRAT} especially. \\
%Suppose that $f(n)^k = n^{ikt}$ and $g(n)^l = n^{ilt'}$, for some $k,l \in \mb{N}$ and $t,t' \in \mb{R}$.  
Write $f(n) = f_0(n) n^{it}$ and $g(n) = g_0(n)n^{it'}$, where $f_0$ and $g_0$ are completely multiplicative functions taking values in $\mu_k$ and $\mu_l$, respectively. We show in this section that if $\overline{\{f(n),g(n+1)\}_n} \neq \mb{T}^2$ then $t/t' \in \mb{Q}$. \\
%To this end, it will be convenient for us to condition on the values of $f_0$ and $g_0$, and to control the angular distribution of $n^{it'}$ and $n^{it}$ simultaneously, as was discussed at the beginning of Section \ref{SECAUX}. 
%Unfortunately, it is not immediately clear from the construction of $f_0$ and $g_0$ here that $f_0$ and $g_0$
%\subsection{Towards the Case $f(n)^k = g(n)^l = n^{it}$}
%However, the results there are phrased in terms of functions $h_1$ and $h_2$ to which $f(n)n^{-it/k}$ and $g(n)n^{-it'/l}$ are pretentious, rather than the functions $f_0$ and $g_0$ constructed above. To avoid having to rederive these results in this context, we will prove in Lemma \ref{GETHS} that for suitable $B$ and $N$, $f_0(n) = h_1(n)$ and $g_0(Bn+1) = h_2(Bn+1)$ simultaneously for ``most'' $n$ satisfying $P^-(n(Bn+1)) > N$. This will be sufficient for our purposes. \\
To that end, we will need the following lemma, which is a standard extension of a ``repulsion'' estimate for the pretentious distance~\cite{GranSound}, due to Granville and Soundararajan (see, for instance, Lemma C.1 in \cite{MRT} for the case $k = 2$). We recall that for arithmetic functions $F,G : \mb{N} \ra \mb{U}$, where $\mb{U}$ denotes the closed unit disc, and $x \geq 2$, we set
$$\mb{D}(F,G;x) := \left(\sum_{p \leq x} \frac{1-\text{Re}(F(p)\bar{G(p)})}{p}\right)^{\frac{1}{2}}.$$
\begin{lem} \label{EQUIV}
Let $k \geq 1$ and let $f : \mb{N} \ra \mu_k$ be a multiplicative function. Then 
\begin{equation*}
\inf_{|t| \leq x} \mb{D}(f,n^{it};x) \geq \frac{1}{2k}\min\left\{\sqrt{\log_2 x}, \mb{D}(f,1;x)\right\} - O_k(1).
\end{equation*}
\end{lem}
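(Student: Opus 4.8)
The plan is to run a standard ``repulsion'' argument for the pretentious distance, exploiting the fact that $f$ takes values in the finite group $\mu_k$, so that $f(p)^k = 1$ for every prime $p$. The key input is the triangle inequality for $\mb{D}$ (i.e. $\mb{D}(F_1,F_2;x) + \mb{D}(G_1,G_2;x) \geq \mb{D}(F_1G_1,F_2G_2;x)$), together with the pointwise inequality that governs the repulsion phenomenon: for $u,v \in \mb{T}$ one has $1 - \text{Re}(u^k) \leq k^2(1-\text{Re}(u))$ trivially but, more to the point, a reverse-type comparison obtained by writing $1 - \text{Re}((uv)) \le 2(1-\text{Re}(u)) + 2(1-\text{Re}(v))$ and iterating. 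Concretely, fix $t$ with $|t| \le x$ achieving (up to $o(1)$) the infimum, and set $F := f n^{-it}$, so that $\mb{D}(F,1;x) = \mb{D}(f,n^{it};x)$. Since $f^k = 1$ identically on primes, we have $F(p)^k = p^{-ikt}$ for all $p$, whence $\mb{D}(1, n^{ikt}; x) = \mb{D}(F^k, 1; x) \le k\,\mb{D}(F,1;x)$ by $k$-fold application of the triangle inequality.

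The next step is to show that $\mb{D}(1, n^{ikt}; x)$ cannot be much smaller than $\sqrt{\log_2 x}$ unless $kt$ is extremely close to $0$ — this is the classical fact that the archimedean character $n^{i\tau}$ is non-pretentious to $1$ at scale $x$ unless $|\tau| \ll 1/\log x$: indeed $\mb{D}(1,n^{i\tau};x)^2 = \sum_{p \le x} (1-\cos(\tau \log p))/p$, which is $\gg \log_2 x$ once $|\tau|$ exceeds a fixed multiple of $1/\log x$ (by restricting to primes $p$ with $\tau \log p$ in a fixed arc bounded away from $0 \bmod 2\pi$, using Mertens). So either $|kt| \ll 1/\log x$, in which case $n^{ikt} = 1 + O(1)$ on $[1,x]$ and hence $\mb{D}(f,1;x) \le \mb{D}(f,n^{it};x) + \mb{D}(n^{it},1;x) \le \mb{D}(f,n^{it};x) + \mb{D}(n^{ikt},1;x)/1 + O_k(1)$... more cleanly: $|t|\log x \ll_k 1$ forces $\mb{D}(1,n^{it};x) = O(1)$, so $\mb{D}(f,1;x) \le \mb{D}(f,n^{it};x) + O(1)$, giving the bound in this branch; or $\mb{D}(1,n^{ikt};x) \gg \sqrt{\log_2 x}$, and then $k\,\mb{D}(f,n^{it};x) \ge \mb{D}(1,n^{ikt};x) \gg \sqrt{\log_2 x}$, giving the bound in the other branch. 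In both cases $\mb{D}(f,n^{it};x) \gg \tfrac{1}{k}\min\{\sqrt{\log_2 x}, \mb{D}(f,1;x)\} - O_k(1)$; taking the infimum over $|t|\le x$ on the left and tracking the constant ($\tfrac{1}{2k}$ suffices after absorbing numerical losses into the $O_k(1)$) yields the claim.

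\textbf{Main obstacle.} The only delicate point is the quantitative non-pretentiousness of $n^{i\tau}$ to $1$: one must show $\mb{D}(1,n^{i\tau};x)^2 \gg \log_2 x$ whenever $C/\log x \le |\tau|$ for a suitable absolute constant $C$, with a clean enough dependence to survive division by $k$ and the final comparison. This is routine — it follows from the prime number theorem (or just Mertens plus a Chebyshev-type count, splitting dyadically and noting that in each dyadic block $(y, 2y]$ with $y \le x$ the quantity $\tau\log p$ varies by about $\tau y / y = \tau$, so if $1/|\tau|$ is below $x$ a positive proportion of the blocks contribute $\gg 1/\log y$ each, summing to $\gg \log_2 x$) — but it is where the ``$\min$ with $\sqrt{\log_2 x}$'' genuinely enters, and care with the constant $C$ versus $k$ is needed so that the error term remains $O_k(1)$ rather than growing with $x$. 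Everything else is a bookkeeping exercise with the triangle inequality for $\mb{D}$, using crucially that the order $k$ of the group $\mu_k$ is fixed.
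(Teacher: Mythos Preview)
Your overall architecture matches the paper's: use $f^k=1$ and the triangle inequality to get $k\,\mb{D}(f,n^{it};x)\geq \mb{D}(f^k,n^{ikt};x)=\mb{D}(1,n^{ikt};x)$, then dichotomize on the size of $t$, with the small-$t$ branch feeding back to $\mb{D}(f,1;x)$ via one more triangle inequality and the large-$t$ branch handled by lower-bounding $\mb{D}(1,n^{ikt};x)$ directly.

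The gap is in your large-$t$ branch. You assert that $\mb{D}(1,n^{i\tau};x)^2\gg\log_2 x$ whenever $|\tau|\geq C/\log x$, and call this routine via the prime number theorem ``or just Mertens plus a Chebyshev-type count, splitting dyadically''. That does not cover the full range $|\tau|\leq kx$ required by the infimum in the statement. Your dyadic heuristic needs, in each block $(y,2y]$, control on primes in sub-intervals of length $\asymp y/|\tau|$; once $|\tau|$ is any fixed positive power of $x$ these are too short for Chebyshev or Brun--Titchmarsh to say anything, and summing only over the blocks with $y$ large compared to $|\tau|$ yields merely $\asymp\log(\log x/\log|\tau|)$, not $\log_2 x$. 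Equivalently, since
\[
\mb{D}(1,n^{i\tau};x)^2=\log_2 x-\log|\zeta(1+1/\log x+i\tau)|+O(1),
\]
you need $|\zeta(1+1/\log x+i\tau)|\ll(\log|\tau|)^{1-c}$ for some fixed $c>0$, and the classical bound $|\zeta(1+it)|\ll\log|t|$ gives nothing when $|\tau|\asymp x$. The paper splits instead at $|t|=1$: for $|t|\leq 1$ it uses the PNT-level asymptotic $\mb{D}(1,n^{iu};x)^2=\log(1+|u|\log x)+O_k(1)$ (in particular $\mb{D}(1,n^{ikt};x)=\mb{D}(1,n^{it};x)+O_k(1)$) together with the triangle-inequality comparison you describe; for $1\leq|t|\leq x$ it invokes the Vinogradov--Korobov bound $|\zeta(1+1/\log x+i\tau)|\ll(\log|\tau|)^{2/3}$ to obtain $\mb{D}(1,n^{ikt};x)^2\geq 0.33\log_2 x-O_k(1)$. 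That zero-free-region input is the missing ingredient, and it is not accessible by the elementary tools you propose.
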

\begin{proof}
%This is analogous to an argument of Granville and Soundararajan (see also \cite[Lemma C.1]{mrt}). 
The triangle inequality gives
\begin{align}\label{eqq17}
k\mathbb{D}(f,n^{it};x)\geq \mathbb{D}(f^k,n^{itk};x)=\mathbb{D}(1,n^{itk};x).
\end{align}
Now, the Vinogradov-Korobov zero-free region (see, for instance, Section 9.5 of \cite{Mon}) gives that 
$$|\zeta(1+1/\log x + it)| \ll (\log |t|)^{0.67} \text{ for all } 1\leq |t| \leq x^2.$$
%Since
%$$\log |\zeta(1+1/\log x + ikt)| = \text{Re}\left(\sum_{p \leq x} \frac{1}{p^{1+ikt}}\right) + O(1),$$
It follows that
\begin{align*}
\mb{D}(1,n^{itk};x)^2&= \sum_{p \leq x} \frac{1-\text{Re}(p^{-ikt})}{p} \geq \log_2 x - \log|\zeta(1+1/\log x + itk)| - O(1)\\
&\geq \log_2 x - 0.67 \log_2 |tk| - O(1) \geq 0.33 \log_2 x - O_k(1).
\end{align*}
Inserting this estimate into \eqref{eqq17}, we get that 
\begin{equation}\label{TRIINTRIV}
\mb{D}(f,n^{it};x) \geq \frac{1}{2k} \sqrt{ \log_2 x} - O_k(1) 
\end{equation}
in this case.\\
Suppose now that $|t|\leq 1$. We may assume that
\begin{align}\label{eqq7}
\mathbb{D}(1,n^{itk};x)<\frac{1}{2}\mathbb{D}(f,1;x)
\end{align}
since in the opposite case the lemma follows immediately from \eqref{eqq17}.\\
By the prime number theorem, for each $|u|\leq k$ the asymptotic $\mathbb{D}(1,n^{iu};x)^2=\log(1+|u|\log x)+O_k(1)$ holds. In particular, 
$$\mathbb{D}(1,n^{itk};x)^2= \log(1+k|t|\log x) + O_k(1) = \log(1+|t|\log x) + O_k(1) = \mathbb{D}(1,n^{it};x)^2+O_k(1),$$ 
for $|t|\leq 1$. Applying the triangle inequality once again, then invoking \eqref{eqq17} and \eqref{eqq7}, we find that
\begin{align*}
\mathbb{D}(f,1;x)\leq \mathbb{D}(1,n^{it};x)+\mathbb{D}(f,n^{it};x)&= \mathbb{D}(1,n^{itk};x)+\mathbb{D}(f,n^{it};x)+O_k(1)\\
&< \frac{1}{2}\mb{D}(f,1;x)+k\mathbb{D}(f,n^{it};x)+O_k(1).    
\end{align*}
%Subtracting both sides by $\frac{1}{2}\mb{D}(f,1;x)$, we get
%$$\mathbb{D}(f,n^{it};x) \geq \frac{1}{2k}\mb{D}(f,1;x) -O_k(1).$$ 
Combined with \eqref{TRIINTRIV}, this yields the claim.
\end{proof}
The next result shows that for most $n$ with $P^-(n(Bn+1))$ large, we can reduce to the case that $f_0 = h_1$ and $g_0 = h_2$. This will allow us to use the results of previous sections in proving Proposition \ref{RATRATIO}.
\begin{lem} \label{GETHS}
Let $\eta > 0$ be sufficiently small (in terms of $k$ and $l$). Let $B$ be an even integer. Then there is a subsequence $\{x_j\}_j$ and a parameter $N$ depending at most on $\eta$ such that if $j$ is sufficiently large (in terms of $\eta$) then the following holds: \\
for all but $O(\eta \Phi_{N,B}(x_j))$ integers $n \leq x_j$ with $P^-(n(Bn+1)) > N$, we have $f_0(n) = h_1(n)$ and $g_0(Bn+1) = h_2(Bn+1)$.
\end{lem}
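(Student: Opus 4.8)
\textbf{Proof proposal for Lemma \ref{GETHS}.}

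The plan is to deduce this lemma by combining the pseudo-pretentiousness of $f$ and $g$ (Proposition \ref{REDUCE}) with the pointwise approximation provided by Lemma \ref{CHEB}, applied separately to $f$ at $n$ and to $g$ at $Bn+1$. First I would recall that, since $\overline{\{(f(n),g(n+1))\}_n} \neq \mb{T}^2$ and $\overline{\{f(n)\}_n} = \overline{\{g(n)\}_n} = \mb{T}$, Proposition \ref{REDUCE} guarantees that $f$ is $h_1 n^{it}$-pretentious and $g$ is $h_2 n^{it'}$-pretentious for pseudocharacters $h_1, h_2$ and reals $t, t'$; here (after a root-of-unity normalization) we may take the archimedean twists to be the $t, t'$ appearing in the decompositions $f = f_0 n^{it}$, $g = g_0 n^{it'}$, and observe that $f_0 \bar{h_1}$ and $g_0 \bar{h_2}$ are then both $1$-pretentious, taking values in a fixed finite group $\mu_{k'}$ (with $k'$ a common multiple of $k$ and the order of $h_1$, and similarly on the $g$ side).

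Next I would apply Lemma \ref{CHEB} to $f$ with a small parameter $\eta' = \eta'(\eta, k, l)$ to produce a sequence $\{x_j\}_j$ and a parameter $N_1$ such that $f(n) = h_1(n) n^{it} + O(\eta')$ for all but $O(\eta' \Phi_{N_1,B}(x_j))$ of the $n \le x_j$ with $P^-(n(Bn+1)) > N_1$. Dividing through by $n^{it}$, this says $f_0(n) = h_1(n) + O(\eta')$ off an exceptional set of the same size. Since $f_0 \bar{h_1}$ takes values in the finite set $\mu_{k'}$, whose nonidentity elements are bounded away from $1$ by a constant $c(k') > 0$, choosing $\eta' < c(k')$ forces $f_0(n) = h_1(n)$ exactly off that exceptional set. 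I would run the analogous argument with the variant of Lemma \ref{CHEB} (and Lemma \ref{SMALLDEV}) in which $f(n)$ is replaced by $g(Bn+1)$ — the excerpt explicitly records that this substitution is available — obtaining a parameter $N_2$ and, after refining $\{x_j\}_j$ to a common subsequence, the exact identity $g_0(Bn+1) = h_2(Bn+1)$ off an exceptional set of size $O(\eta' \Phi_{N_2,B}(x_j))$. Taking $N := \max\{N_1, N_2\}$ and noting $\Phi_{N,B}(x_j) \ll \Phi_{N_i,B}(x_j)$, the union of the two exceptional sets has size $O(\eta' \Phi_{N,B}(x_j))$, which is $O(\eta \Phi_{N,B}(x_j))$ once $\eta'$ is chosen small enough relative to $\eta$.

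The main obstacle — really the only subtlety — is bookkeeping the two sequences of parameters so that a single $\{x_j\}_j$ and a single $N$ work simultaneously for both the $f$-at-$n$ and $g$-at-$(Bn+1)$ approximations: Lemma \ref{CHEB} produces its own $\{x_j\}_j$ depending on the convergence behaviour of $\mf{I}_f$ (resp. $\mf{I}_g$), and one must pass to a common subsequence along which both partial sums $\mf{I}_{f_0\bar h_1}(x_j;N)$ and $\mf{I}_{g_0 \bar h_2}(x_j;N)$ are simultaneously small, then enlarge $N$ and reindex. This is routine but must be done carefully; once it is in place, the discreteness of $\mu_{k'}$ upgrades the $O(\eta')$ approximations to exact equalities, completing the proof.
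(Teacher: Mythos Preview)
Your overall strategy matches the paper's: establish that $f_0\bar{h_1}$ and $g_0\bar{h_2}$ are $1$-pretentious, apply Lemma \ref{CHEB} (and its $Bn+1$ variant via Lemma \ref{SMALLDEV}) to get $f_0(n)=h_1(n)+O(\eta')$ and $g_0(Bn+1)=h_2(Bn+1)+O(\eta')$ off small exceptional sets, then use the discreteness of the value sets to upgrade to exact equality. The bookkeeping you flag about merging the two sequences $\{x_j\}$ and the two parameters $N_1,N_2$ is indeed routine.

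There is, however, one genuine gap. Your sentence ``after a root-of-unity normalization we may take the archimedean twists to be the $t,t'$ appearing in the decompositions $f=f_0n^{it}$, $g=g_0n^{it'}$'' is doing real work that you have not justified: no root-of-unity normalization will change the archimedean parameter, and what you actually need is that if $\mb{D}(f,h_1n^{it_1};x)\ll 1$ then necessarily $t_1=t$, so that $\mb{D}(f_0\bar{h_1},1;x)\ll 1$. The paper spends roughly half of its proof of this lemma on precisely this point. It argues as follows: from $\mb{D}(f_0\bar{h_1},n^{i(t_1-t)};x)\ll 1$ and the fact that $f_0\bar{h_1}$ takes values in $\mu_{k'}$, Lemma \ref{EQUIV} (the Granville--Soundararajan repulsion estimate) forces $\mb{D}(f_0\bar{h_1},1;x)\ll_{k'}1$; then the triangle inequality gives $\mb{D}(1,n^{i(t_1-t)};x)\ll 1$, which via the standard $\zeta$-function estimate forces $t_1=t$. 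Without invoking Lemma \ref{EQUIV} (or an equivalent repulsion argument) you cannot pass from ``$f_0\bar{h_1}$ is $n^{i(t_1-t)}$-pretentious'' to ``$f_0\bar{h_1}$ is $1$-pretentious'', and the subsequent application of Lemma \ref{CHEB} with twist $n^{it}$ would not yield $f_0(n)=h_1(n)+O(\eta')$. You should insert this step explicitly.

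A minor stylistic difference: the paper obtains pseudo-pretentiousness directly from the hypothesis $f(n)^k=n^{it}$ via Proposition \ref{ULTPRETLEM} (since $\sum_{n\le x}f(n)^k\bar{f(n+1)}^k/n\gg\log x$), rather than via Proposition \ref{REDUCE} and the non-density assumption. Both routes are valid in context, but the paper's choice keeps the lemma self-contained within the standing hypotheses of Section \ref{SECRATRATIO}.
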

\begin{proof}
Since $f(n)^k = n^{it},$ we have
$$\sum_{n \leq x} \frac{f(n)^k\bar{f(n+1)}^k}{n} = \sum_{n \leq x} \frac{1}{n} + O\left(k|t|\right) \gg \log x.$$
A similar estimate holds with $g^l$ in place of $f^k$. Thus, by Proposition \ref{ULTPRETLEM} we have that $f$ and $g$ are both pseudo-pretentious, and that %Thus, by Theorem \ref{TAOBINTHM}, it follows that $\mb{D}(f^k,n^{ikt_1};x) \ll 1$ for some $t_1 \in \mb{R}$. Applying Proposition 2.8 of \cite{RIG}, it then follows that there is a multi
there are real numbers $t_1,t_2 \in \mb{R}$, primitive characters $\chi_1$ and $\chi_2$ with conductors $q_1$ and $q_2$ and completely multiplicative functions $h_1,h_2$ such that $h_1(n)^k = \chi_1(n/(n,q_1^{\infty}))$ and $h_2(n)^l  = \chi_2(n/(n,q_2^{\infty}))$, and such that
\begin{align*}
\mb{D}(f_0,h_1n^{i(t_1-t)};x) &= \mb{D}(f,h_1(n)n^{it_1};x) \ll 1 \\
\mb{D}(g_0,h_2n^{i(t_2-t')};x) &= \mb{D}(f,h_1(n)n^{it_1};x) \ll 1.
\end{align*}
Suppose that when $\chi_1$ and $\chi_2$ do not vanish, they take values in $\mu_r$ and $\mu_s$, respectively. We begin by showing that $t_1 = t$ and $t_2 = t'$.
Applying Lemma \ref{EQUIV} with $F = f_0\bar{h_1}$, %which take values in $\mu_{kr}$ and $\mu_{ls}$, respectively, we get
\begin{align*}
1 &\gg \mb{D}(F,n^{i(t_1-t)};x) \geq \inf_{|u| \leq x} \mb{D}(F,n^{iu};x) \geq \frac{1}{2kr}\min\left\{\sqrt{\log_2 x}, \mb{D}(F,1;x)\right\} - O_k(1)\\
&= \frac{1}{2kr}\mb{D}(F,1;x) - O_k(1),
\end{align*}
%the last equality being the only possibility that does not yield a contradiction. 
It follows that $\mb{D}(f_0,h_1;x) \ll_{k,r} 1$, and hence the triangle inequality yields
$$\mb{D}(1,n^{i(t_1-t)};x) \leq \mb{D}(f_0\bar{h_1},1;x) + \mb{D}(f_0\bar{h_1},n^{i(t_1-t)};x) \ll_{k,r} 1.$$
Arguing as in the proof of Lemma \ref{EQUIV} it is clear that for $x$ sufficiently large,
$$\mb{D}(1,n^{i(t_1-t)};x) = \log_2 x - \log|\zeta(1+1/\log x + i(t_1-t))| + O(1) \geq \log_2 x - O\left(1 + \frac{\log x}{1 +|t_1-t|\log x}\right)$$
can only be bounded if $t_1 = t$. Similarly, we must take $t_2 = t$.\\
%follows from this that $t_1 = t$. \\
Now, set $F_0(n) := f_0(n)\bar{h_1(n)}$ and $G_0(n) := g_0(n)\bar{h_2(n)}$. Lemmata \ref{SMALLDEV} and \ref{CHEB} imply that for a suitable choice of $N$ (depending only on $\eta$, $f$ and $g$) and a sequence $\{x_j\}_j$, we get that for large enough $j$,
%a straightforward modification of Proposition 2.3 of \cite{Klu} and Chebyshev's inequality implies that
%\begin{align*}
%\eta^2&|\{n \leq x : P^-(n(Bn+a)) > N, |F_0(n) - \mathfrak{P}(F_0,n \mapsto n;x)| > \eta\}| \\
%&\leq \sum_{n \leq x} |F_0(n)-\mathfrak{P}(F,n \mapsto n;x)|^2 \\
%&\ll |\{n \leq x : P^-(n(Bn+1)) > N\}|\left(\mb{D}(F_0,1;x)^2 + (\log_2 x)^3/\log x\right),
%\end{align*}
%and similarly
%\begin{align*}
%\eta^2&|\{n \leq x : P^-(n(Bn+a)) > N, |G_0(Bn+1) - \mathfrak{P}(G_0,n \mapsto Bn+1;x)| > \eta\}|\\ &\leq \sum_{n \leq x} |G_0(n)-\mathfrak{P}(G_0,n \mapsto Bn+1;x)|^2 \\
%&\ll |\{n \leq x : P^-(n(Bn+1)) > N\}|\left(\mb{D}(G_0,1;x)^2 + (\log_2 x)^3/\log x\right).
%\end{align*}
%When $N$ and $x$ are sufficiently large (in terms of $\eta$), the RHS can be made of size $O(\eta^3 x)$, whence it follows that
\begin{align*}
|\{n \leq x_j : P^-(n(Bn+1)) > N,|F_0(n) - 1| > \eta\}|&\ll \eta \Phi_{N,B}(x_j) \\
%|\{n \leq x : P^-(n(Bn+1)) > N\}|\\
|\{n \leq x_j : P^-(n(Bn+1)) > N,|G_0(Bn+1) - 1| > \eta\}| &\ll \eta \Phi_{N,B}(x_j).
%|\{n \leq x : P^-(n(Bn+1)) > N\}|.
\end{align*}
It follows that for all but $O(\eta \Phi_{N,B}(x_j))$ integers $n\leq x_j$ with $P^-(n(Bn+1)) > N$, we have that $F_0(n) = 1 + O(\eta)$, and $G_0(Bn+1) = 1 + O(\eta)$, for $N$ sufficiently large. 
%Moreover, when $N$ is large enough, we have that $\mathfrak{P}(F_0,n \mapsto n;x) = 1 + O(\eta)$, and similarly, $\mathfrak{P}(G_0,n \mapsto Bn+1;x) = 1 + O(\eta)$. It thus follows that $F_0(n) = 1 + O(\eta)$ and $G_0(n) = 1 + O(\eta)$, which 
But since $F_0$ and $G_0$ take discrete values,  for these $n$, $F_0(n) = G_0(n) = 1$ when $\eta$ is sufficiently small (in terms of $k$ and $l$). In light of the definition of $F_0$ and $G_0$, this implies the claim.
\end{proof}
\begin{prop} \label{RATRATIO}
Suppose that $f(n)^k = n^{it}$ and $g(n)^l = n^{it'}$, for $k,l \in \mb{N}$ and $t,t' \in \mb{R}$ and $tt'\ne 0.$ Then $\overline{\{(f(n),g(n+1))\}}\ne\mb{T}^2$ if, and only if, $t = \lambda t'$ with $\lambda \in \mb{Q}$.
\end{prop}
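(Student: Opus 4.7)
I treat the two directions separately. For the ``if'' direction, assume $t = (a/b)t'$ with $a \in \mb{Z}$, $b \in \mb{N}$, so $bt = at'$. Raising $f^k = n^{it}$ to the $b$-th power and $g^l = n^{it'}$ (evaluated at $n+1$) to the $a$-th power gives
\[
\frac{f(n)^{kb}}{g(n+1)^{la}} = \frac{n^{ibt}}{(n+1)^{iat'}} = \left(\frac{n}{n+1}\right)^{ibt},
\]
whose image is a countable subset of $\mb{T}$ accumulating only at $1$, and hence not dense in $\mb{T}$. Since the map $\phi : \mb{T}^2 \ra \mb{T}$ defined by $\phi(x,y) := x^{kb}y^{-la}$ is a continuous surjective homomorphism, a dense orbit would force $\phi(\overline{\{(f(n),g(n+1))\}_n}) = \mb{T}$, contradicting the above. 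Hence $\overline{\{(f(n),g(n+1))\}_n} \ne \mb{T}^2$.

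For the converse, I argue contrapositively: assuming $\tau := t/t' \notin \mb{Q}$, I show the orbit is dense. Write $f(n) = f_0(n)n^{it/k}$ and $g(n) = g_0(n)n^{it'/l}$ with $f_0 : \mb{N} \ra \mu_k$, $g_0 : \mb{N} \ra \mu_l$. Lemma \ref{GETHS} furnishes pseudocharacters $h_1, h_2$ of conductors $q_1, q_2$ and a sequence $x_j \ra \infty$ such that, upon setting $B := 2q_1q_2$ and $N > B$, on all but $O(\eta\Phi_{N,B}(x_j))$ integers $m \le x_j$ with $P^-(m(Bm+1)) > N$ we have both $f_0(m) = h_1(m)$ and $g_0(Bm+1) = h_2(Bm+1)$. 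The key reduction is to parametrize $n = Bm$; then $n+1 = Bm+1$, and $(m,B) = 1$ (since $P^-(m) > N > B$) gives $f(n) = f(B)f(m)$.

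Now fix an arbitrary target $(z,w) \in \mb{T}^2$ and $\e > 0$. Let $J_1, J_2 \subset \mb{T}$ be arcs of length $\delta \ll \e$ centered at $z/f(B)$ and $w/B^{it'/l}$, respectively. Since $u/v = tl/(t'k)$ is irrational, Proposition \ref{ONES}(b) applied with $u = t/k$, $v = t'/l$, and $(\alpha,\beta) = (1,1)$ produces a set of positive upper logarithmic density of $m$ satisfying $h_1(m) = 1$, $h_2(Bm+1) = 1$, $m^{it/k} \in J_1$, and $m^{it'/l} \in J_2$. Choosing $\eta$ smaller than half this density, Lemma \ref{GETHS} supplies arbitrarily large such $m$ on which in addition $f_0(m) = 1$ and $g_0(Bm+1) = 1$. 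For these $m$, with $n = Bm$,
\begin{align*}
f(n) &= f(B)\cdot m^{it/k} \in f(B)\cdot J_1, \text{ within } \delta \text{ of } z,\\
g(n+1) &= (Bm+1)^{it'/l} = B^{it'/l}m^{it'/l} + O(1/m), \text{ within } \delta + O(1/m) \text{ of } w,
\end{align*}
so $(f(n),g(n+1))$ enters every $\e$-neighborhood of $(z,w)$, proving density. The main obstacle is precisely the shift-$1$ versus shift-$B$ mismatch: Proposition \ref{ONES}(b) requires $2q_1q_2 \mid B$, yet the conclusion concerns shift by $1$. The dilation $n = Bm$ with $B := 2q_1q_2$ bridges the gap, the factors $f(B)$ and $B^{it'/l}$ being absorbed into translates of the arcs $J_1, J_2$.
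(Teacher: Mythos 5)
Your proof is correct and follows essentially the same route as the paper's. In the forward direction, both you and the paper observe that a rational relation $bt=at'$ forces $(f(n),g(n+1))$ (after raising to suitable powers) to concentrate on the diagonal; you phrase this via the homomorphism $\phi(x,y)=x^{kb}y^{-la}$ and the paper by noting the closure of $\{(n^{is_1t},(n+1)^{ir_1t'})\}$ is the diagonal, which is the same observation. In the converse direction both arguments invoke Lemma \ref{GETHS} to replace $f_0,g_0$ by the pseudocharacters $h_1,h_2$ on almost all $n$ with $P^-(n(Bn+1))>N$, use the dilation $n=Bm$ with $B=2q_1q_2$ to absorb the shift, and then apply Proposition \ref{ONES}(b) — whose hypothesis $u/v\notin\mathbb{Q}$ is exactly where irrationality of $t/t'$ enters — to produce a positive-density set landing in a prescribed target box; you present it constructively while the paper presents it as a contradiction, but the content is identical.
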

\begin{proof}
Let $f(n) = f_0(n)n^{it_1}$ and $g(n) = g_0(n)n^{it_2}$, where $f_0^k = 1 = g_0^l.$ As $(n+1)^{it_2} = n^{it_2} + o(|t_2|/n)$, it is equivalent to consider when the sequence of pairs $(f_0(n)n^{it_1},g_0(n+1)n^{it_2})$ is, or is not dense for $n$ sufficiently large. \\
We first suppose that $t/t' =\frac{r_1}{s_1} \in \mb{Q} \bk \{0\}.$ In this case, since 
$$\overline{(f(n)^{ks_1},g(n+1)^{\l r_1})}=\overline{(n^{is_1t},(n+1)^{ir_1t'})}=\{(z,z)\in \mathbb{T}^2\}\ne \mathbb{T}^2,$$ 
we have $\overline{(f(n),g(n+1)}\ne\mb{T}^2.$ \\ %Let $f(n) = f_0(n)n^{it_1}$ and $g(n) = g_0(n)n^{it_2}$, where $f_0^k = 1 = g_0^l$. We first suppose that $t/t' = r \in \mb{Q} \bk \{0\}$. As remarked above, it suffices to check that the sequence of pairs $$(f_0(n)n^{it/k},g_0(n+1)n^{it'/l}) = (f_0(n)n^{irt'/k},g_0(n+1)n^{it'/l})$$ 
%is not dense. Since $f_0(n),g_0(n+1) \in \mu_{kl}$, for each $n$, the limit points of this sequence are necessarily of the form $(\zeta_k^{j}a^r,\zeta_l^{j'}a)$, where $a \in \mb{T}$ and $0 \leq j,j' \leq kl-1$. It is obvious that the set of such points cannot cover all of $\mb{T}^2$, so the sequence is necessarily not dense. \\
Suppose now that $t/t' \notin \mb{Q}$, and that $\{(f_0(n)n^{it_1},g(n+1)n^{it_2})\}_n$ is not dense in $\mb{T}^2$. By Proposition \ref{REDUCE}, we know that $f$ and $g$ must be pseudo-pretentious. Thus, let $h_1$ and $h_2$ be pseudocharacters with conductors $q_1$ and $q_2$ respectively, and $t,t' \in \mb{R}$ such that $f$ is $h_1n^{it}$-pretentious and $g$ is $h_2n^{it'}$-pretentious. From the proof of Lemma \ref{GETHS}, it follows that $t = t_1$ and $t' = t_2$. Now, replacing $n$ by $Bn$, where $B = 2q_1q_2$, it follows that 
$$(f_0(n)n^{it_1},g_0(Bn+1)n^{it_2})\notin B_{\e}(z\bar{f(B)}B^{-it},wB^{-it'}).$$ 
Set $(z',w') = (z\bar{f(B)}B^{-it},wB^{-it'})$.
Now, according to Lemma \ref{GETHS} (applied with $\eta = \e^3$), we can choose $x$ sufficiently large (belonging to a prescribed infinite subsequence) and $N$ suitably large in terms of $\e$, such that $f_0(n) = h_1(n)$ and $g_0(Bn+1) = h_2(Bn+1)$, for all but $O\left(\e^3 \Phi_{N,B}(x)\right)$ elements $n \leq x$ with $P^-(n(Bn+1))> N$. Hence, we know that 
$$(h_1(n)n^{it_1},h_2(Bn+1)n^{it_2}) \notin B_{\e}((z',w'))$$ 
for all but $O(\e^3\Phi_{N,B}(x))$ of those $n$. \\
%$$(h_1(n)n^{it},h_2(Bn+1)n^{it'}) \notin B_{\e/2}((z',w'))$$
%for all but $O(\e^3x/(\log N))$ integers $n \leq x$ with $P^-(n(Bn+1)) >N$. 
%(here we use the fact that $|\{n \leq x : P^-(n(Bn+1)) > N\}| \asymp x/(\log N)$). 
On the other hand by Proposition \ref{ONES} there are $\gg \frac{\e^2}{kl}\Phi_{N,B}(x)$ on which $\max\{|n^{it}-z'|,|n^{it'}-w'|\} \leq \e/4$ and $h_1(n) = h_2(Bn+1) = 1$, whence 
$$(h_1(n)n^{it},h_2(Bn+1)n^{it'}) = (n^{it},n^{it'}) \in B_{\e/4}((z',w')).$$ 
This must intersect with the set of $n$ where $(f_0(n),g_0(Bn+1)) = (h_1(n),h_2(Bn+1))$, provided that $\e$ is sufficiently small (in terms of $k$ and $l$ alone). This contradicts the earlier claim, and proves the proposition.
\end{proof}
\section{The Eventually Rational Case} \label{SECEVRAT}
In this section, we prove Proposition \ref{EVRAT}. That is, we assume that $f$ and $g$ are both eventually rational functions, i.e., for some $N$ sufficiently large there is an $m \in \mb{N}$ such that for all primes $p > N$ we have $f(p)^m = 1$, and that $\{f(n)\}_n$ and $\{g(n)\}_n$ are both dense.\\
Appealing to Proposition \ref{REDUCE} once again, we may assume that $f$ is $h_1 n^{it_1}$-pretentious and $g$ is $h_2n^{it_2}$-pretentious, $k$ and $l$ are minimal positive integers such that $h_1^k = \tilde{\chi_1}$ and $h_2^l = \tilde{\chi_2}$. As above, we let $r$ and $s$ denote the respective orders of $\tilde{\chi_1}$ and $\tilde{\chi_2}$. \\
%Note that a two-dimensional analogue of Proposition \ref{PRETk} of Section \ref{RIGSEC} imply that there are minimal positive integers $k,l$ such that $\mb{D}(f^k,\chi_1 n^{it_1};x),\mb{D}(g^l,\chi_2n^{it_2};x) \ll 1$, and moreover Lemma 2.5 there implies that there exist $h_1,h_2$ completely multiplicative, unimodular functions such that $h_1^k = \tilde{\chi}_1$ and $h_2^l = \tilde{\chi}_2$. In what follows, we will let $r$ and $s$ denote the respective orders of $\chi_1$ and $\chi_2$.  For the moment, we shall only consider the case of eventually rational functions.
%\begin{prop} \label{EVRAT}
%Suppose $f,g: \mb{N} \ra \mb{T}$ are eventually rational, $\{f(n)\}_n$ and $\{g(n)\}_n$ are dense and either $|T_{f^k}||T_{g^k}| > 1$ or $T_{f^k} \neq T_{g^k}$ for all $k$. Then $\{(f(n),g(n+1))\}_n$ is jointly dense.
%\end{prop}
%Thus, the proposition asserts that the only cases covered by the conjecture are those for which either $f$ or $g$ is irrational. We postpone the proof of the conjecture with this additional reduction to the next section. \\
To prove Proposition \ref{EVRAT}, we need the following technical result, which extends Lemma 2.12 of \cite{RIG} (which is the special case $M' = 1$ and $M'' = P_N := \prod_{p \leq N} p$). In what follows, for a positive integer $n$ we write $\text{rad}(n) := \prod_{p|n}p$ to denote the squarefree kernel of $n$.
\begin{lem} \label{EXT}
Let $N > 2q_1q_2$ be a positive integer, and let $m,m',m''$ be coprime squarefree positive integers such that $P_N= mm'm''$, with $(m',2q_1q_2) = 1$.  Let $M'$ and $M''$ be integers with $\text{rad}(M') = m'$ and $\text{rad}(M'') = m''$ and $M''$ odd, and suppose $M$ is a multiple of $2q_1q_2$ with $\text{rad}(M/2q_1q_2) = m$. Then
\begin{equation*}
\sum_{n \leq x \atop M|n} \frac{1_{n \equiv -\bar{M}(M')}1_{(n(n+1),M'') = 1} 1_{h_1(n) = 1}1_{h_2(n+1) = 1}}{n} = \left(\frac{1}{M\phi(M')klrs}\prod_{p|M''}\left(1-\frac{2}{p}\right) + o(1)\right) \log x.
\end{equation*}
\end{lem}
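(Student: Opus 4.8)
The plan is to expand the indicator functions $1_{h_1(n)=1}$ and $1_{h_2(n+1)=1}$ into character sums over $\mu_{kr}$ and $\mu_{ls}$, exactly as in the proof of Lemma \ref{TWIST}, and then split off the ``main'' term coming from the trivial characters from the ``error'' terms, which will be handled by Theorem \ref{TAOBINTHM} (Tao's theorem on logarithmic binary correlations). The congruence conditions $M\mid n$, $n\equiv -\bar M\,(M')$, and $(n(n+1),M'')=1$ together pin $n$ and $n+1$ down to a fixed coprime residue class modulo $MM'M''$ (using $(M,M')=1$, and the Chinese Remainder Theorem); moreover the sieve condition $P^-(\cdot)$ on the surviving primes is supplied by a divisor-closure argument analogous to Lemma \ref{TRIV}, so the purely arithmetic skeleton of the sum is governed by the density $\tfrac{1}{M\phi(M')}\prod_{p\mid M''}(1-2/p)$. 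The factor $\tfrac{1}{klrs}$ will emerge from the diagonal (trivial-character) contribution in the double character expansion, just as the factor $\tfrac{1}{krls}$ appeared in Lemma \ref{TWIST}.

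First I would write $1_{M\mid n}$ and the residue/coprimality conditions in terms of a single congruence $n\equiv a\ (Q)$ with $Q:=\mathrm{lcm}(M,M',M'')$ on a suitable union of residue classes, and use the hypothesis $N>2q_1q_2$ (so that every $n$ with $P^-(n(n+1))>N$ is automatically coprime to $q_1q_2$) to ensure the character conditions are well-posed. Then I would apply the identities
\[
1_{h_1(n)=1}=\frac{1}{kr}\sum_{0\le j\le kr-1} h_1(n)^j,\qquad 1_{h_2(n+1)=1}=\frac{1}{ls}\sum_{0\le m\le ls-1} h_2(n+1)^m,
\]
obtaining a $(kr)(ls)$-fold sum of logarithmically-weighted binary correlations of $h_1^j(n)\,h_2^m(n+1)$ restricted to the fixed residue class mod $Q$ and to integers with no small prime factors outside the allowed set. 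For the pairs $(j,m)$ with $r\mid j$ and $s\mid m$, the functions $h_1^j$ and $h_2^m$ reduce to powers of $\tilde\chi_1,\tilde\chi_2$, which are periodic; since $q_1,q_2\mid Q$ these are constant on the relevant residue class, so the contribution is $1_{P^-(n(n+1))>N_{\text{eff}}}$ summed over an arithmetic progression, which by a Lemma \ref{TRIV}-type sieve estimate and partial summation contributes the stated main term $\bigl(\tfrac{1}{M\phi(M')klrs}\prod_{p\mid M''}(1-2/p)+o(1)\bigr)\log x$. For every other pair $(j,m)$, at least one of $h_1^j 1_{P^->N}\,n^{\text{(twist absorbed)}}$ or $h_2^m 1_{P^->N}$ is non-pretentious — this is precisely where the minimality of $k$ and $l$ (hence non-pretentiousness of $h_1^\nu$ for $1\le\nu\le r-1$ modulo twists, and likewise for $h_2$) is used — so Theorem \ref{TAOBINTHM} gives $o(\log x)$; summing over the $O(1)$ such pairs keeps the total error $o(\log x)$.

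The main obstacle I anticipate is bookkeeping the interaction between the congruence conditions and the character conditions: one must verify that restricting to the progression $n\equiv a\ (Q)$ does not secretly force $h_1(n)$ or $h_2(n+1)$ to take a value that trivializes or otherwise distorts one of the character sums, and that the non-diagonal correlation sums, once restricted to this progression, still fall under the hypotheses of Theorem \ref{TAOBINTHM} (which requires the relevant multiplicative functions, after inserting the progression indicator as a further bounded multiplicative-on-a-sublattice factor, to remain non-pretentious). This is handled exactly as in the proof of Proposition \ref{ONES}: detect $n\equiv a\ (Q)$ by additive characters mod $Q$, absorb each such character into a Dirichlet character twist of $h_1^j$ or $h_2^m$, and note that a non-pretentious function twisted by a fixed Dirichlet character is still non-pretentious (because pretentiousness to $\psi n^{iu}$ for all $\psi,u$ is what fails). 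The remaining verifications — that $\mathrm{rad}$ of the various moduli matches $m,m',m''$ so that the sieve factor is $\prod_{p\mid M''}(1-2/p)$ and the density is $\tfrac{1}{M\phi(M')}$, and that the error terms $O(4^{\pi(N)})$ from the sieve are $o(\log x)$ for $x$ large relative to $N$ — are routine extensions of Lemma \ref{TRIV} and of the corresponding step in Lemma \ref{TWIST}.
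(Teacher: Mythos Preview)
Your approach is correct and is essentially the paper's: expand $1_{h_1(n)=1}$ and $1_{h_2(n+1)=1}$ into character sums, kill the off-diagonal correlations with Theorem \ref{TAOBINTHM}, and evaluate the surviving diagonal as a sieve density giving $\tfrac{1}{M\phi(M')krls}\prod_{p\mid M''}(1-2/p)$. Two small points to tighten: the paper first substitutes $n=mM$ and then detects the residual congruence $mM\equiv -1\,(M')$ via \emph{Dirichlet} characters mod $M'$ --- your suggestion to use additive characters mod $Q$ and ``absorb each into a Dirichlet twist'' does not work as stated, since additive characters are not multiplicative and so the resulting sums would not fall under Theorem \ref{TAOBINTHM}; and the pairs $(j,m)$ on which $h_1^j,h_2^m$ reduce to powers of $\tilde\chi_1,\tilde\chi_2$ are those with $k\mid j$ and $l\mid m$ (not $r\mid j$, $s\mid m$), and among these only the fully principal combination gives the main term, the non-principal ones still being $o(\log x)$.
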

%Lemma \ref{POSDENS} in Section \ref{RIGSEC} is the special case that $M' = 1$ and $M'' = \prod_{p \leq N} p$. 
%In other terms, the objective of this lemma is to show that integers of the form $Mn$ and $Mn+1$ such that the only small primes dividing them divide $m''$, and on which $h_1$ and $h_2$ are controlled, have positive logarithmic density.
\begin{proof}
The proof of Lemma \ref{EXT} is similar to that of Lemma 2.12 of \cite{RIG}, and we merely sketch the proof here. If $\mc{S}$ denotes the LHS above then, as $M$,$M'$ and $M''$ are coprime, using orthogonality modulo $M'$ gives
\begin{align*}
\mc{S} &= \frac{1}{krls} \sum_{m \leq x/M \atop mM \equiv 1(M')} \frac{1_{(m(Mm+1),M'') = 1} 1_{h_1(mM) = 1} 1_{h_2(mM+1)= 1}}{mM} \left(\sum_{0 \leq a \leq kr-1}\sum_{0 \leq b \leq ls-1} h_1(mM)^a h_2(mM+1)^b\right) \\
&= \frac{1}{krls}\sum_{\chi(M')} \frac{\chi(M)}{\phi(M')}\sum_{0 \leq a \leq kr-1}h_1(M)^a\sum_{0 \leq b \leq ls - 1} \sum_{m \leq x/M} \frac{\chi(m)h_1(m)^ah_2(Mm+1)^b1_{(n,M'') = 1}1_{(Mm+1,M'')=1}}{mM}.
\end{align*}
One proceeds as in the proof of Lemma 2.12 in \cite{RIG} (using Theorem \ref{TAOBINTHM}) to show that whenever $ab \neq 0$, the contributions to the full sum are $o(\log x)$, irrespective of $\chi$. In the remaining case $a = b = 0$, and $\chi$ is \emph{non-principal} modulo $M'$, the contribution to the full sum is $o(\log x)$ as well. It thus follows that
\begin{align*}
\mc{S} &= \frac{1}{M\phi(M')krls}  \sum_{m \leq x/M} \frac{1_{(m,M''M') = 1}1_{(Mm+1,M'') = 1}}{m} + o(\log x) \\
&= \frac{1}{M\phi(M')krls} \prod_{p|M''} \left(1-\frac{2}{p}\right) \log x + o(\log x),
\end{align*}
the main term arising from the coprimality of $M$ and $M''$ provided that $x$ is large enough in terms of $M$ (using, e.g., the fundamental lemma of the sieve). This proves the claim. 
\end{proof}
%We will use the semi-conventional notation of $(a,b^{\infty}):= \prod_{p|b} p^{\nu_p(a)}$, for $b$ squarefree, in the proof below.

%Our last lemma shows that a logarithmically equidistributed sequence is necessarily dense. We include the proof for completeness.
%\begin{lem}
%Let $\{a_n\}_n \subset \mb{T}$ be logarithmically equidistributed. Then $\{a_n\}_n$ is dense.
%\end{lem}
%\begn
\begin{proof}[Proof of Proposition \ref{EVRAT}]
%If $\{(f(n),g(n+1))\}_n$ is 
Let $N$ be such that for all $p > N$, we have $f(p)^k = g(p)^l = 1$. 
Assume for contradiction that $(f(n),g(n+1)) \notin B_{\e}(z,w)$, for some $\e > 0$ and some pair $(z,w) \in \mb{T}^2$. By Proposition \ref{REDUCE}, we know that $\mb{D}(f,h_1n^{it};x),\mb{D}(g,h_2n^{it'};x) \ll_{\e} 1$, for some $h_1,h_2$ completely multiplicative taking values in roots of unity. By multiplying the characters $\chi_1$ and $\chi_2$ corresponding to $h_1$ and $h_2$ by the principal character modulo $P_N$, we may assume that $h_1$ and $h_2$ are equal to 1 at all of the primes $p \leq N.$ Furthermore, as $f\bar{h_1}$ and $g\bar{h_2}$ also take values in roots of unity for all but finitely many primes, the proof of Lemma \ref{EQUIV} implies that $t = t' = 0$. \\
%and that $h_1(p) = h_2(p) = 1$ for all $p \leq N$. \\
Now since $\{f(n)\}_n$ and $\{g(n)\}_n$ are dense, and $f(p)$,$g(p)$ take values in a set of bounded order roots of unity for all but finitely many primes, we know that there is (at least) a prime $p$ and a prime $p'$ at which the argument of $f(p) = e(\alpha)$ and $g(p') = e(\beta)$, with $\alpha,\beta \notin \mb{Q}$. The additional hypothesis of the proposition implies that, in fact, $p \neq p'$. \\
%and an integer $d$ such that $f(p)^d = e(\alpha)$, $g(p')^d = e(\beta)$, where $\alpha,\beta \notin \mb{Q}$. 
With $\e > 0$ and $z,w \in \mb{T}$ given above, we may apply Kronecker's theorem (with $\alpha$ and with $\beta$, separately) to get $a,b \in \mb{N}$ such that $$\|(f(p)^{a},g(p')^{b}) - (z\bar{f(2q_1q_2)},w)\|_{\ell_1} < \e.$$ Now set $B = 2q_1q_2p^{a}$. Thus, in the remainder of the proof, in order to achieve a contradiction it will suffice to check that we can find $n$ such that 
%In what follows, we condition on this set, assuming that the exceptional set is sufficiently small, as in the proof of Proposition \ref{NOZEROH}\\
%With the above data in place, it remains to check that there is a positive upper density set of $n$ such that 
$f(n) = f(2q_1q_2)f(p)^{a}$ and $g(nB+1) = g(p')^{b}$. \\
Let $\eta > 0$ be a parameter to be chosen depending at most on $\e$, $k$,$r$,$l$ and $s.$ By Lemma \ref{CHEB}, there is a suitable infinite sequence of $x$ (and a possibly larger choice of $N$) for which we have $f(n) = h_1(n)$ and $g(Bn+1) = h_2(Bn+1)$ for all but $O(\eta\Phi_{N,B}(x/B))$ integers $n \leq x$ with $P^-(n(Bn+1)) > N$. Thus, it suffices to show that there are $\gg \e^2\Phi_{N,B}(x/B)$ integers $n \leq x$ with $P^-(n(Bn+1)) > N$ that satisfy:
%In particular, it suffices to 
\begin{enumerate}[(i)]
\item $f(nB/(nB,P_N^{\infty})) = 1 = g((nB+1)/(nB+1,P_N^{\infty}))$; 
\item $(p')^{b}|(nB+1)$ but $(nB(nB+1),P_N/pp') = 2q_1q_2$.
\end{enumerate}
To do this, we shall apply Lemma \ref{EXT}. Let $m = p$, $m' = p'$ and $m'' = P_N/pp'$, and set $M = B$, $M' = (p')^{b}$ and $M''= m''$. 
%Observe that one can arrange matters so that $h_1(n) = f(n/(n,P_N^{\infty})$ and $h_2(n) = g(n/(n,P_N^{\infty}))$. This is owing to the fact that $f$ and $g$ are eventually rational (so that $\mb{D}(f^k,1;x),\mb{D}(g^l,1;x) \ll_{\e} 1$), Lemma 2.4 in Section \ref{RIGSEC} and the finiteness of $N$ as $x \ra \infty$. 
As $f(nB/(nB,P_N^{\infty})) = h_1(n)$ and $g((Bn+1)/(Bn+1,P_N^{\infty})) = h_2(Bn+1)$ for all but $O(\e^3\Phi_{N,B}(x/B))$, Lemma \ref{EXT} implies that the number of $n \leq x/B$ with the required properties is
$$\gg \frac{1}{B(p')^{b-1}(p'-1)krls}\prod_{p'' \leq N \atop p'' \nmid pp'B} \left(1-\frac{2}{p}\right)\gg \frac{1}{(p')^{b}krls} \Phi_{N,B}(x/B).$$
Choosing $\eta = C((p')^{b}krls)^{-1}$ with a sufficiently small constant $C = C(\e) > 0$ (noting that this quantity depends only on $\e$) implies the claim.
%By Lemma \ref{CHEB}, there is a suitable infinite sequence of $x$ for which we have $f(n) = h_1(n)$ and $g(Bn+1) = h_2(Bn+1)$ for all but $O(\e^3\Phi_{N,B}(x/B))$ integers $n \leq x$ with $P^-(n(Bn+1)) > N$, provided that $N$ is sufficiently large. Thus, it suffices to show that there are $\gg \e^2\Phi_{N,B}(x/B)$ integers $n \leq x$ with $P^-(n(Bn+1)) > N$ that satisfy:
%%In particular, it suffices to 
%\begin{enumerate}[(i)]
%\item $f(nB/(nB,P_N^{\infty})) = 1 = g((nB+1)/(nB+1,P_N^{\infty}))$; 
%\item $(p')^{lb}|(nB+1)$ but $(nB(nB+1),P_N/pp') = 2q_1q_2$.
%\end{enumerate}
%To do this, we may apply Lemma \ref{EXT}. Let $m = p$, $m' = p'$ and $m'' = P_N/pp'$, and set $M = B$, $M' = (p')^{lb}$ and $M''= m''$. 
%%Observe that one can arrange matters so that $h_1(n) = f(n/(n,P_N^{\infty})$ and $h_2(n) = g(n/(n,P_N^{\infty}))$. This is owing to the fact that $f$ and $g$ are eventually rational (so that $\mb{D}(f^k,1;x),\mb{D}(g^l,1;x) \ll_{\e} 1$), Lemma 2.4 in Section \ref{RIGSEC} and the finiteness of $N$ as $x \ra \infty$. 
%As $f(nB/(nB,P_N^{\infty})) = h_1(n)$ and $g((Bn+1)/(Bn+1,P_N^{\infty})) = h_2(Bn+1)$ for all but $O(\e^3\Phi_{N,B}(x/B))$, Lemma \ref{EXT} implies that the number of $n \leq x/B$ with the required properties is
%$$\gg \frac{1}{B(p')^{db-1}(p'-1)krls}\prod_{p'' \leq N \atop p'' \nmid pp'B} \left(1-\frac{2}{p}\right)\geq \frac{1}{(p')^{db}krls} \Phi_{N,B}(x/B).$$
%This implies the claim.
\end{proof}
\section{Some Preliminary Cases of Proposition \ref{IRRAT}} \label{DEGENCASES}
In this section, we shall dispose of several special cases of  Proposition \ref{IRRAT}. In this way, we shall be able to reduce our work in proving Proposition \ref{IRRAT} in Section \ref{SECIRRAT} to focusing on functions with certain prescribed behaviour.\\
%We now complete the proof of Proposition \ref{IRRAT} in the cases deferred earlier, i.e., 
In this section, we shall assume that either $f(p)^k = p^{it}$ or $g(p)^l = p^{it'}$ for all \emph{sufficiently large} primes $p$. Since the argument in the first case (i.e., with $f$) is symmetric to that with $g$, we shall focus only on the case with $f$. We consider three subcases of this case, according to the behaviour of $g$: \\
\underline{subcase i):} we assume that $f(p)^k = p^{it}$ for all large primes $p$, and $g$ is an eventually rational function (see Lemma \ref{SKETCH}); \\
\underline{subcase ii):}  we assume that $f(p)^k = p^{it}$ \emph{only} for large primes, and $g(p)^l = p^{it'}$, for \emph{all} primes $p$ (see Lemma \ref{SKETCH2}); \\
\underline{subcase iii):} we assume that $f(p)^k = p^{it}$ for \emph{all} $p$, and $g(p)^l = p^{it'}$ \emph{only} for large $p$ (see the discussion following the proof of Lemma \ref{SKETCH2}). 
%, the remaining cases being similar.
\begin{lem} \label{SKETCH}
Suppose there are positive integers $N$ and $k$ and a real $t$ such that for all $p > N$, $f(p)^k = p^{it}$. Suppose moreover that $g$ is eventually rational. Then $\{(f(n),g(n+1))\}_n$ is dense in $\mb{T}^2$.
\end{lem}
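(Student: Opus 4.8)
The plan is to apply the reduction machinery already set up, then use that $g$ eventually rational forces $t'=0$ and, combined with density of $\{g(n)\}_n$, produces a prime $p'$ where $g(p')=e(\beta)$ with $\beta$ irrational; meanwhile $f(p)^k=p^{it}$ for large $p$ means $f$ behaves essentially like $n^{it/k}$ away from finitely many primes, so the marginal $\{f(n)\}_n$ is dense and, crucially, $n^{it/k}$ is equidistributed. First I would argue by contradiction: suppose there are $\e>0$ and $(z,w)\in\mb T^2$ with $(f(n),g(n+1))\notin B_\e((z,w))$ for all large $n$. By Proposition \ref{REDUCE}, both $f$ and $g$ are pseudo-pretentious; say $f$ is $h_1n^{it_1}$-pretentious and $g$ is $h_2n^{it_2}$-pretentious. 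Since $g$ is eventually rational, $g\bar h_2$ takes bounded-order root-of-unity values at all but finitely many primes, so the argument of Lemma \ref{EQUIV} (as in the proof of Lemma \ref{GETHS}) forces $t_2=0$; and since $f(p)^k=p^{it}$ for $p>N$, comparing $f^k$ with $h_1^kn^{ikt_1}$ via Lemma \ref{EQUIV} forces $t_1=t/k$.

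Next I would fix $B=2q_1q_2$ (with $q_1,q_2$ the conductors of $h_1,h_2$) and choose, via Lemma \ref{CHEB} applied to both $f$ and $g$, an infinite sequence $\{x_j\}_j$ and a large parameter $N$ such that for all but $O(\eta\Phi_{N,B}(x_j))$ integers $n\le x_j$ with $P^-(n(Bn+1))>N$ we have $f(n)=h_1(n)n^{it_1}$ and $g(Bn+1)=h_2(Bn+1)$; here $\eta>0$ is a small parameter to be chosen in terms of $\e,k,l,r,s$. Since $\{g(n)\}_n$ is dense and $g$ is eventually rational, there is a prime $p'$ with $g(p')=e(\beta)$, $\beta\notin\mb Q$ (otherwise $\{g(n)\}_n$ would lie in a set of bounded-order roots of unity times $n^{it_2}=1$, hence not be dense). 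Using Kronecker's theorem with $\beta$ I can pick $b\in\mb N$ with $|g(p')^b - w'|<\e/4$ for an appropriate target $w'$ absorbing the factor $\bar g(B)$ and the like. Then I would count, via Lemma \ref{EXT} (with $m''$ absorbing the primes of $P_N$ other than $p'$, and $M'=(p')^b$), the integers $n\le x_j/B$ with $(p')^b\mid Bn+1$, $h_1(n)=h_2(Bn+1)=1$, and the appropriate coprimality conditions: this gives $\gg_{p',k,l,r,s}\Phi_{N,B}(x_j)$ such $n$. Finally, among these I also need $|n^{it_1}-z'|<\e/4$ for the target $z'=z\bar h_1(B)B^{-it_1}$; since $t_1=t/k$ and (if $t\ne0$) $n^{it_1}$ equidistributes on arcs, Proposition \ref{ONES}(a) lets me additionally impose $n^{it_1}\in I$ for $I$ a length-$\e/4$ arc about $z'$ while keeping $\gg\e^2\Phi_{N,B}(x_j)$ survivors; if $t=0$ the condition is vacuous. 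This set of $n$ must intersect the density-$1-O(\eta)$ set where $f(n)=h_1(n)n^{it_1}$ and $g(Bn+1)=h_2(Bn+1)$, once $\eta$ is small enough, yielding $(f(n),g(Bn+1))\in B_\e((z,w))$ after undoing the shift $n\mapsto Bn$ and accounting for $(Bn+1)^{it_2}=1$ — contradicting our assumption.

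The main obstacle I anticipate is bookkeeping the two separate regimes $t=0$ and $t\ne0$ for $f$: when $t\ne0$ one must simultaneously control the discrete values $h_1(n),h_2(Bn+1)$ and the archimedean coordinate $n^{it/k}$, which is exactly what Proposition \ref{ONES}(a) is designed to do, but one has to verify that the arc condition on $n^{it/k}$ is compatible with the residue/coprimality conditions coming from Lemma \ref{EXT} without destroying positive density — this is the point where combining the Beurling-polynomial minorization argument of Proposition \ref{ONES} with the sieve input of Lemma \ref{EXT} requires care. The case $t=0$ is genuinely easier and essentially reduces to the eventually-rational analysis of Proposition \ref{EVRAT} with the roles of the irrational-valued primes shifted onto $g$. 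I expect the rest (choosing $\eta$, unwinding the $\ell^1$-ball estimates, the passage $n\mapsto Bn$) to be routine in the style of the proofs of Propositions \ref{EVRAT} and \ref{RATRATIO} above.
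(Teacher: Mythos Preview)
Your strategy mirrors the proof of Proposition \ref{EVRAT}, and the preliminary reductions (pseudo-pretentiousness via Proposition \ref{REDUCE}, $t_2=0$ via Lemma \ref{EQUIV}, $t_1=t/k$, existence of a prime $p'$ with $g(p')=e(\beta)$, $\beta\notin\mb{Q}$) are all correct. However, the final intersection step fails as written. You obtain from Lemma \ref{CHEB} that $f(n)=h_1(n)n^{it_1}$ and $g(Bn+1)=h_2(Bn+1)$ on all but $O(\eta\,\Phi_{N,B}(x_j))$ of the integers $n$ with $P^-(n(Bn+1))>N$, and from Lemma \ref{EXT} a supply of $n$ with $(p')^b\mid Bn+1$. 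But since $g$ is eventually rational, the irrational-argument prime $p'$ necessarily satisfies $p'\le N$; hence $(p')^b\mid Bn+1$ forces $P^-(Bn+1)\le p'\le N$, and the two sets you wish to intersect are in fact \emph{disjoint}. The difficulty you flag (combining the Beurling minorant of Proposition \ref{ONES} with the sieve of Lemma \ref{EXT}) is real, but the more basic incompatibility with Lemma \ref{CHEB} is the one that breaks the argument. One can repair this by applying the concentration estimate to the large-prime part $(Bn+1)/(Bn+1,P_N^\infty)$ and proving a genuine hybrid of Lemma \ref{EXT} and Proposition \ref{ONES} carrying the archimedean constraint $n^{it_1}\in I$ through the residue conditions, but that is substantial additional work, not routine bookkeeping.

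The paper bypasses all of this with a simpler device: it passes to the equivalent pair $(f(n-1),g(n))$ and absorbs the irrational prime for $g$ into the multiplier, setting $B=2q_1q_2p^r$ with $r$ chosen via Kronecker so that $g(B)$ hits the target $w$ (up to $O(\e^2)$). Writing $n=Bm$ gives $g(Bm)=g(B)g(m)$, and one only needs $g(m)\approx 1$ and $f_0(Bm-1)m^{it/k}\approx zB^{-it/k}$ on a set with $P^-(m(Bm-1))>N$. No divisibility is imposed on the shifted argument, so Lemma \ref{GETHS} and Proposition \ref{ONES}(a) apply directly and the conflict with the sieve condition never arises.
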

\begin{proof}
By choosing $N$ larger if necessary, and replacing $k$ by some multiple of $k$, we may assume that $g(p)^k = 1$ for all $p > N$ as well. Now suppose $(f(n-1),g(n)) \notin B_{\e}(z,w)$ for $n$ sufficiently large. By Proposition \ref{REDUCE}, $f$ and $g$ are both pseudo-pretentious, and following the argument in the previous section, one can show that $f$ is $h_1n^{it/k}$-pretentious, with $h_1$ a pseudo-character with conductor $q_1$, and similarly $g$ is $h_2$-pretentious\footnote{An application of Lemma \ref{EQUIV}, as in the previous section, implies that $g\bar{h}_2$, which takes finite roots of unity as values, is 1-pretentious.} where $h_2$ is a pseudocharacter with conductor $q_2$. We may assume that $t \neq 0$ here, otherwise $f$ and $g$ are both eventually rational, which is a case dealt with by Proposition \ref{EVRAT}, proven in the previous section.\\
Since $\{g(n)\}_n$ is dense, but with rational argument for all but finitely many primes, there exists a prime $p$ be such that $g(p) = e(\alpha)$ with $\alpha \notin \mb{Q}$. We now fix an even integer $B=2q_1q_2p^r$, where the exponent $r$ is chosen (via Kronecker's theorem) such that $g(p)^r = \bar{g(2q_1q_2)}\bar{w} + O(\e^2)$. Now, if $m$ is chosen so that $P^-(m(Bm-1)) > N$, it follows that 
$$f(Bm-1) = f_0(Bm-1)(Bm-1)^{it/k} = f_0(Bm-1)B^{it/k} m^{it/k} + O(\e^2)$$
for $m$ sufficiently large. Thus, it follows that 
\begin{equation}\label{BADDF0G0}
(f_0(Bm-1)m^{it/k},g_0(m)) \notin B_{\e/2}(zB^{-it/k},1)
\end{equation}
for all $m$ sufficiently large with $P^-(m(Bm-1)) > N$.\\
Now, by Lemma\footnote{Strictly speaking, Lemmata \ref{GETHS} and \ref{ONES} deal only with the pair of linear forms $(n\mapsto n,n\mapsto Bn+1)$; however, the same results can be derived with $(m\mapsto Bm-1,m\mapsto m)$ with minimal change to the proofs.} \ref{GETHS}, for all but $O(\e^3\Phi_{N,B}(x))$ integers $m \leq x$ (with $x$ chosen from appropriate infinite increasing sequence and $N$ suitably large) with $P^-(m(Bm-1)) > N$ we have 
$$(f_0(Bm-1),g_0(m)) = (h_1(Bm-1),h_2(m)).$$ 
Moreover, by Lemma \ref{ONES} there are $\gg \e^2 \Phi_{N,B}(x)$ such integers for which $(h_1(Bm-1),h_2(m)) = (1,1)$, and such that $m^{it/k} = zB^{-it/k} + O(\e^2)$. Hence, it follows that 
$$(f_0(Bm-1)m^{it/k},g_0(m)) = (h_1(Bm-1)m^{it/k},h_2(m)) = (zB^{-it/k},1) + O(\e^2),$$
for a positive upper density set of $m$. But this contradicts \eqref{BADDF0G0}. The contradiction implies
$$\overline{\{f(n-1),g(n)\}_n} = \overline{\{f(n),g(n+1)\}_n} = \mb{T}^2,$$
as claimed.
\end{proof}
%; this is possible because $f$ is irrational (since $g$ is not, and one of $f$ and $g$ must be), so that $t \neq 0$. By a second application of Kronecker's theorem, we can choose an integer $l$ such that $g(p)^l = z\bar{g(B)} + O(\e^2)$, recalling that $g(p)$ has \emph{irrational} argument. \\
%With these choices made, we now consider the sequence of integers $\{p^lm\}_m$, where $P^-(m(Bp^lm^k-1)) > N$, such that $$f(Bp^lm^k-1) = f_0(Bp^lm^k-1) (Bp^lm^k)^{it} + O(\e^2)$$ for $m$ sufficiently large. By Proposition\footnote{Strictly speaking, Proposition \ref{ONES} dealt only with forms $m$ and $Bm+1$; the same arguments work with $m$ and $Bm-1$ instead.} \ref{ONES}, we can choose $m$ such that $h_2(m) = 1$, $h_1(Bp^l m^k-1) = 1$ and $m^{ikt} = p^{ilt}$. It follows from this choice and the fact that $g_0(m^k) = 1$ that $$g(Bp^lm^k) = z + O(\e^2) \text{ and } f(Bp^lm^k-1) = w + O(\e^2).$$ 
%This implies the claim.
%In addition, we can further condition $m$ to belong to a positive upper density sequence where $m^{ikt} = p^{-ilt}+O(\e^2)$. %a suitable choice of $B$ can be made such that these conditions can be implemented on a positive upper density subset.
%\end{proof}
%In the remaining case, we will only sketch the case where $f(n)^k \neq n^{it}$, the case with $f$ replaced by $g$ being analogous.
\begin{lem} \label{SKETCH2}
Suppose there are positive integers $N,k,l$ and real numbers $t,t'$ such that for all $p > N$, $f(p)^k = p^{it}$ and $g(p)^l = p^{it'}$, but for any $m \in \mb{N}$ and any $u \in \mb{R}$ there is an integer $n$ (necessarily prime) such that $f(n)^m \neq n^{iu}$. Then $\{(f(n),g(n+1))\}$ is dense.
\end{lem}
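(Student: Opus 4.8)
The plan is to argue by contradiction. Suppose there are $\e>0$ and $(z,w)\in\mb{T}^2$ with $(f(n),g(n+1))\notin B_{\e}((z,w))$ for all large $n$. First I would dispose of degenerate values of $t,t'$: if $t=0$ then $f$ is eventually rational, and either $t'=0$ as well (in which case both are eventually rational and Proposition~\ref{EVRAT} applies, after noting the density hypotheses force the required side condition) or $t'\neq 0$, in which case $g(p)^l=p^{it'}$ holds for all $p$ while $f$ is eventually rational, a case handled by the mirror image of Lemma~\ref{SKETCH}; the case $t\neq0$, $t'=0$ is symmetric. So assume $tt'\neq0$. By Proposition~\ref{REDUCE}, $f$ is $h_1n^{it_1}$-pretentious and $g$ is $h_2n^{it_2}$-pretentious for pseudocharacters $h_1,h_2$ of conductors $q_1,q_2$. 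Writing $f_0(n):=f(n)n^{-it/k}$ and $g_0(n):=g(n)n^{-it'/l}$, both $f_0\bar h_1$ and $g_0\bar h_2$ take values in bounded-order roots of unity at all but finitely many primes, so exactly as in the proof of Proposition~\ref{EVRAT}, Lemma~\ref{EQUIV} forces $t_1=t/k$, $t_2=t'/l$ and $\mb{D}(f_0,h_1;x),\mb{D}(g_0,h_2;x)\ll_{\e}1$. Finally I would extract the key structural fact: the extra hypothesis on $f$ produces a prime $p_0\le N$ with $f_0(p_0)=e(\gamma_0)$ for some $\gamma_0\notin\mb{Q}$. Indeed, otherwise the (finite) orders of $f_0(p)$ over $p\le N$ have a common multiple $c$, and since $f_0(p)^k=1$ for $p>N$ we would get $f_0(n)^{m_0}=1$ for all $n$ with $m_0:=\mathrm{lcm}(c,k)$, i.e.\ $f(n)^{m_0}=n^{im_0t/k}$ for all $n$, contradicting the hypothesis.

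\textbf{Case A ($t/t'\notin\mb{Q}$).} Then $t/k$ and $t'/l$ are rationally independent. Take $B:=2q_1q_2$. Running the argument of Lemma~\ref{GETHS} (using Proposition~\ref{REDUCE} in place of the explicit relation $f(n)^k=n^{it}$ to obtain pseudo-pretentiousness of $f$, which is all that argument needs it for), there are $N$ and a sequence $\{x_j\}_j$ along which $f_0(m)=h_1(m)$ and $g_0(Bm+1)=h_2(Bm+1)$ for all but $O(\eta\,\Phi_{N,B}(x_j))$ integers $m\le x_j$ with $P^-(m(Bm+1))>N$. By Proposition~\ref{ONES}(b) there are $\gg_{\e}\Phi_{N,B}(x_j)$ such $m$ additionally satisfying $h_1(m)=h_2(Bm+1)=1$, $m^{it/k}\in I_1$ and $m^{it'/l}\in I_2$, for any prescribed arcs $I_1,I_2$. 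Choosing $I_1$ an arc of length $\asymp\e$ about $z\,\overline{f(B)}$ and $I_2$ an arc of length $\asymp\e$ about $w\,\overline{B^{it'/l}}$, with $\eta$ small in terms of $\e$, I get for a positive-density set of $m$ that $f(Bm)=f(B)f_0(m)m^{it/k}=z+O(\e/2)$ and (for $m$ large) $g(Bm+1)=g_0(Bm+1)(Bm+1)^{it'/l}=w+O(\e/2)$, so $(f(Bm),g(Bm+1))\in B_{\e}((z,w))$, a contradiction.

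\textbf{Case B ($t/t'=c/d\in\mb{Q}$, $\gcd(c,d)=1$, $d>0$).} Now $t/k=\tfrac cd\cdot t'/l$, so $(m^{it/k},m^{it'/l})$ always lies on the proper closed subgroup $H:=\{(x,y)\in\mb{T}^2:x^{d}=y^{c}\}$, and the archimedean freedom alone cannot fill a neighbourhood of $(z,w)$; this is where $p_0$ enters. Set $B=B(a):=2q_1q_2p_0^{\,a}$. A direct computation, using $d\,(t/k)=c\,(t'/l)$, gives $f(B)^{d}B^{-ict'/l}=C\cdot D^{a}$ with $C:=f(2q_1q_2)^{d}(2q_1q_2)^{-ict'/l}$ and, crucially, $D:=f(p_0)^{d}p_0^{-ict'/l}=e(d\gamma_0)$, which has infinite order because the exponent $p_0^{\,i(d\,t/k-c\,t'/l)}$ collapses to $1$. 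Hence by Kronecker's theorem I may fix $a=a(\e,z,w)$ so that $f(B)^{d}B^{-ict'/l}$ lies within a suitably small power of $\e$ of $z^{d}w^{-c}$; equivalently $(z\,\overline{f(B)},\,w\,\overline{B^{it'/l}})$ lies correspondingly close to $H$. Pick $(\xi,\upsilon)\in H$ near this point and $\rho_0\in\mb{R}$ with $e(d\rho_0)=\upsilon$, $e(c\rho_0)=\xi$ (possible since $(\xi,\upsilon)\in H$). With $B$ now fixed, applying Proposition~\ref{ONES}(a) with the single nonzero exponent $u:=t'/(ld)$ and an arc of length $\asymp_{c,d}\e$ about $e(\rho_0)$, together with the Lemma~\ref{GETHS} argument as in Case A, I obtain $\gg_{\e}\Phi_{N,B}(x_j)$ integers $m$ with $h_1(m)=h_2(Bm+1)=1$, $f_0(m)=h_1(m)$, $g_0(Bm+1)=h_2(Bm+1)$ and $(m^{it/k},m^{it'/l})=((m^{iu})^{c},(m^{iu})^{d})$ within $O(\e/4)$ of $(\xi,\upsilon)$. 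For these $m$, $f(Bm)=f(B)m^{it/k}=f(B)\xi+O(\e/4)=z+O(\e/2)$ and, for $m$ large, $g(Bm+1)=(Bm+1)^{it'/l}=B^{it'/l}\upsilon+O(\e/2)=w+O(\e/2)$, contradicting the standing assumption.

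The step I expect to be the main obstacle is Case~B: once $t/k$ and $t'/l$ are rationally dependent, $(m^{it/k},m^{it'/l})$ equidistributes only on the codimension-one subgroup $H$, so reaching a full neighbourhood of $(z,w)$ requires genuinely exploiting the single small prime $p_0$ at which $f$ fails to be a twist of a finite-order function. The delicate points are (i) verifying that the relevant Kronecker quantity $D$ really has infinite order — this hinges precisely on the cancellation $d\,t/k=c\,t'/l$, which leaves $D=e(d\gamma_0)$ with $\gamma_0$ irrational — and (ii) checking that after enlarging $B$ by the fixed power $p_0^{a}$ the sets furnished by Proposition~\ref{ONES} and by the Lemma~\ref{GETHS}-type pointwise approximation still have uniformly positive density, which is immediate since $a$ (hence $B$) depends only on $\e$ and the fixed data $z,w,\gamma_0$, while $N$ and $\{x_j\}_j$ are selected afterwards. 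A minor recurring point is that Lemma~\ref{GETHS} is stated for functions with $f(n)^k=n^{it}$ exactly, whereas here this holds only for $n$ with $P^-(n)>N$; but its proof uses the relation only to force pseudo-pretentiousness, which Proposition~\ref{REDUCE} already supplies, so no real change is needed.
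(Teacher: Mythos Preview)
Your overall strategy matches the paper's: argue by contradiction, invoke Proposition~\ref{REDUCE} for pseudo-pretentiousness, extract the small prime $p_0$ at which $f_0$ has irrational argument, and split according to whether $t/t'$ is rational. Case~A is identical to the paper's (it simply points to the proof of Proposition~\ref{RATRATIO}). In Case~B the paper takes a shortcut you do not: it first replaces $k$ by $kb$ and $l$ by $la$ (where $t/t'=a/b$) so as to reduce to $t=t'$, after which only a \emph{single} archimedean exponent appears and the Kronecker target collapses to $z\bar w$ (up to a fixed factor), with the condition on $m$ reducing to $m^{it}\approx w$. You instead keep $t/k$ and $t'/l$ distinct but commensurate, parametrize the rank-one subgroup $H=\{x^d=y^c\}$ by $\rho\mapsto(e(c\rho),e(d\rho))$, and use the free parameter $a$ in $B=2q_1q_2p_0^{a}$ to rotate the target pair onto $H$. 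The two arguments are equivalent; the paper's reduction is lighter on bookkeeping, while your version is more explicit about controlling $g_0(Bm+1)$ through the pseudocharacter $h_2$ via Lemma~\ref{GETHS} and Proposition~\ref{ONES}, a step the paper's sketch glosses over with a footnote.

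Two small slips to fix. First, your $c/d$ must be the reduced form of $(t/k)/(t'/l)$, not of $t/t'$; your subsequent computations (e.g.\ $D=f_0(p_0)^d=e(d\gamma_0)$ and $m^{it/k}=(m^{iu})^c$ with $u=t'/(ld)$) are in fact consistent with the former, so only the labelling of the case is off. Second, in the $t=t'=0$ subcase you assert that the density hypotheses force the side condition of Proposition~\ref{EVRAT}; they do not (counterexample~(ii) in the introduction has $\{f(n)\}_n$ and $\{g(n)\}_n$ dense but $T_{f^m}=T_{g^m}$ a single prime for all $m$). This is harmless here because in the ambient setting of Theorem~\ref{IRRAT} at least one of $f,g$ is irrational, which already excludes $t=t'=0$; the paper makes the same elision when it writes ``this was covered in the previous section.''
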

\begin{proof}
If $t/t' \notin \mb{Q}$ then the proof is the same as the corresponding case in the proof of Proposition \ref{RATRATIO} (which only used data about integers $n$ such that $n(Bn+1) > N$ for suitable $B$). \\
Conversely, suppose $t = at'/b$ for some $a,b\in \mb{Z}$, with $b \neq 0$, and suppose there is an $\e > 0$ and a pair $(z,w) \in \mb{T}^2$ such that $(f(n),g(n+1)) \notin B_{\e}(z,w)$ for all $n$ large. We may assume that $tt' \neq 0$, since otherwise $f$ and $g$ are eventually rational, and this was covered in the previous section. Furthermore, since we may replace $k$ by $kb$ and $l$ by $la$, we may assume that $t = t'$. \\
By hypothesis, we can choose a prime $p_0 \leq N$ for which $f(p_0)^k/p_0^{it}$ is not a root of unity of any order. Indeed, if every $p \leq N$ satisfied $f(p)^k/p^{it} \in \mu_m$ for some $m \geq 1$, we could replace $k$ with $km$ and $t$ with $tm$ so that $f(p)^{km} = p^{itm}$. Making these replacements for $k$ and for $t$ iteratively at every prime $p \leq N$ would imply that $f(n)^{k'} = n^{iu}$ for \emph{all} $n$, where $k' \in \mb{N}$ and $u \in \mb{R}$. This contradicts our initial hypothesis regarding $f$. \\
Thus, as per the above paragraph we can choose $p_0 \leq N$ such that $f(2p_0)(2p_0)^{-it} = e(\alpha)$, where $\alpha \notin \mb{Q}$. We may thus choose $\ell$ such that 
$$(f(2p_0)(2p_0)^{-it})^{\ell} = e(\alpha \ell) = z\bar{w} + O(\e^2).$$ 
Now,\footnote{This can be done, for example, by applying Proposition \ref{ONES} with every pair of values of $h_1(n)$ and $h_2(Bn+1)$, then combining all of these contributions.} we pick $m$ with $P^-(m((2p_0)^{\ell}m+1)) > N$ and such that $m^{it} = w + O(\e^2)$. By assumption, it follows that 
$$g((2p_0)^{\ell}m+1) = ((2p_0)^{\ell}m+1)^{it} = (2p_0)^{i\ell t}m^{it} + O(\e^2),$$
for sufficiently large $m$. Then, setting $n = (2p_0)^{\ell}m$ and assuming that $m$ is sufficiently large, we have
\begin{align*}
(f(n),g(n+1)) &= (f(2p_0)^{\ell} m^{it},(n+1)^{it}) = n^{it}((f(2p_0)(2p_0)^{-it})^{\ell},1) + O(\e^2) \\
&= w \cdot (z\bar{w},1) + O(\e^2) = (z,w) + O(\e^2),
\end{align*}
in contradiction to the claim.
\end{proof}
Turning to subcase iii), it remains to consider the case that $f(n)^k = n^{it}$ for some $k \in \mb{N}$ and $t \in \mb{R}$, and $g(p)^l = p^{it'}$ for all $p > N$ but such that for each $m \in \mb{N}$ and $u \in \mb{R}$ there is $n \in \mb{N}$ with $g(n)^m \neq n^{iu}$. The argument is symmetric to that given in subcase ii) (perhaps up to considering the sequence $\{f(Bn-1),g(n)\}_n$, for a suitable choice of $B$). 
%In this case, we can argue as in the proof of Lemma \ref{SKETCH2}. That is, if $t/t' \notin \mb{Q}$ then the proof of Proposition \ref{RATRATIO} suffices here. Otherwise, we can choose a prime $p_0 \leq N$ such that $f(p_0)^kp_0^{-it}$ has irrational argument, and the argument is then the same as in the proof of Lemma \ref{SKETCH2}. 
We leave the details to the interested reader.
\section{Proof of Theorem \ref{IRRAT}: The Remaining Cases} \label{SECIRRAT}
In this section, we prove Theorem \ref{IRRAT}, except in the exceptional cases that were dealt with in the previous section. We will assume here that for all $k \in \mb{N}$ and $t \in \mb{R}$ there are infinitely many primes $p$ such that $f(p)^k \neq p^{it}$; by symmetry we may assume that the same holds for the function $g$. \\
First, suppose for contradiction that there is an $\e > 0$ and a point $(z,w) \in \mb{T}^2$ such that 
$$(f(n),g(n+1)) \notin B_{\e}((z,w)) \text{ for all sufficiently large $n$.}$$ 
Applying Proposition \ref{REDUCE}, we know that $f$ and $g$ are, respectively, $h_1n^{it}$- and $h_2n^{it'}$-pseudo-pretentious, where the conductors of the pseudocharacters $h_1$ and $h_2$ are $q_1$ and $q_2$, respectively. \\
We need two technical results in order to proceed in the proof of Theorem \ref{IRRAT}. The first
%\subsection{Localizing Values of Irrational Functions}
%We can now deduce an essential consequence of this proposition. 
will allow us to simultaneously control the angular distribution of the values of the irrational function $f$ at prime powers $p^m$, as well as the angular distribution of $p^{imt}$, for $t \in \mb{R}$. 
%This is accomplished in part in the following lemma.
\begin{lem} \label{PPOWER}
Let $f: \mb{N} \ra \mb{T}$ be completely multiplicative function such that for all $t \in \mb{R}$ and $l \in \mb{N}$ there are infinitely many primes $p$ such that $f(p)^l \neq p^{it}$. 
%Assume furthermore that $f$ is pseudopretentious to $hn^{it}$, where $h$ is a pseudocharacter with exponent $K$.
%, such that $\{f(n)\}_n$ is dense. 
%Let $z \in \mb{T}$, $u \in \mb{R}$ and $\delta,\eta > 0$. Let $I_{\delta} \subset \mb{T}$ be the symmetric arc about 1 with length $2\delta$. Then for any $N$ suitably large there exists an integer $n$ such that the following holds:
%\begin{enumerate}
%\item $|f(n) - z| < \eta$,
%\item $n^{iu} \in I_{\delta}$,
%\item $h(n/(n,P_N^{\infty})) = 1$,
%\end{enumerate}
%where, as above, $P_N = \prod_{p \leq N} p$.\\
%Moreover, if $tu = 0$ then for any $N$ we can choose $P^-(n) > N$ as well.
Let $z \in \mb{T}$, $u \in \mb{R}$ and $\delta,\eta \in (0,1)$. Let $I \subset \mb{T}$ be the symmetric arc about 1 with length $2\delta$. Then for any $k,N \in \mb{N}$ there exists $n\in\mb{N}$ with $P^-(n) >N$ and $m\in\mb{N}$ such that the following holds:
\begin{enumerate}[(i)]
\item $|f(n)^m - z| < \eta$,\\

\item $n^{ium} \in I$,\\

\item $k|m$.
\end{enumerate}
\end{lem}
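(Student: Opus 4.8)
The conclusion we must reach is that for a completely multiplicative $f:\mb{N}\to\mb{T}$ which is ``totally irrational'' (no power $f^l$ agrees with any $n^{it}$ on all but finitely many primes), and for any target $z\in\mb{T}$, any $u\in\mb{R}$, any tolerances $\delta,\eta$, and any $k,N\in\mb{N}$, we can find $n$ with $P^-(n)>N$ and $m\in\mb{N}$ with $k\mid m$, $|f(n)^m-z|<\eta$, and $n^{ium}$ lying in the symmetric arc $I$ of half-length $\delta$ about $1$. The natural approach is a Kronecker/Weyl-type density argument applied to the two-dimensional orbit $\{(f(p)^m,p^{imu})\}_{m\ge 1}$ for a cleverly chosen prime $p>N$ (so automatically $P^-(p)>N$, and then $n=p$): we want this orbit, restricted to $m$ in the arithmetic progression $k\mb{N}$, to come close to $(z,1)$.

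\textbf{Key steps.} First I would reduce to finding a single prime $p>N$ such that $f(p)$ and $p^{iu}$ are ``jointly irrational'' in the appropriate sense, i.e. such that $1$, $\arg f(p)/2\pi$ and $u\log p/2\pi$ are linearly independent over $\mb{Q}$ — or, failing that, a pair of primes handling the two coordinates separately. The hypothesis on $f$ is exactly what guarantees such a prime exists: if for \emph{every} prime $p>N$ there were a rational relation among $1$, $\arg f(p)$ and $u\log p$, one could (as in the proof of Lemma~\ref{SKETCH2}, the paragraph where $k$ and $t$ are replaced iteratively) clear denominators prime by prime and conclude $f(p)^{l}=p^{it}$ for all large $p$ with a common $l$ and $t$, contradicting total irrationality. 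Care is needed here because there are only finitely many ``small'' obstructing primes below any fixed bound, so one shows the bad set of primes $p$ for which no suitable relation-free structure exists cannot be cofinite; one may also have to split into the case $u=0$ (where the second coordinate is trivially $1$ and one only needs $\arg f(p)/2\pi\notin\mb{Q}$, i.e. $f(p)$ not a root of unity, again available by the hypothesis with $t=0$) and $u\ne 0$. Second, having fixed $p$, I would apply the one- or two-dimensional Kronecker theorem on $\mb{T}$ or $\mb{T}^2$ to the subgroup generated by $(f(p),p^{iu})$, intersected with the sublattice $m\in k\mb{Z}$: since passing from $m$ to $km$ just replaces the generator $(f(p),p^{iu})$ by $(f(p)^k,p^{iku})$, which is \emph{also} jointly irrational when $(f(p),p^{iu})$ is, the orbit $\{(f(p)^{km'},p^{ikum'})\}_{m'\ge 1}$ is still dense in the relevant torus, and we may pick $m'$ with $(f(p)^{km'},p^{ikum'})$ within $\min(\eta,\delta)$ of $(z,1)$; then $m:=km'$ and $n:=p$ do the job.

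\textbf{Main obstacle.} The genuinely delicate point is Step~1: extracting from the qualitative hypothesis ``for all $t,l$ there are infinitely many $p$ with $f(p)^l\ne p^{it}$'' the quantitative existence of a \emph{single} prime $p>N$ whose data $(f(p),p^{iu})$ generates a dense enough subgroup of the torus for the chosen $u$. When $u\ne 0$ this requires ruling out that $f(p)$ is a root of unity times a fixed power of $p$ for all large $p$ \emph{simultaneously}, and the contrapositive argument must be run uniformly: if for each large $p$ there is some $(l_p,t_p)$ with $f(p)^{l_p}=p^{it_p}$, one needs to amalgamate these — the exponents $l_p$ a priori vary, and $t_p$ too, so the ``replace $k$ by $km$ iteratively'' move of Lemma~\ref{SKETCH2} has to be organized so that after finitely many primes the relations stabilize into a single global relation $f(n)^{k'}=n^{it'}$, which is what the hypothesis forbids. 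Once this extraction is done the rest is a routine invocation of Kronecker's theorem, and the arithmetic-progression constraint $k\mid m$ costs nothing because irrationality is inherited under taking $k$-th powers. I would also note that conditions (i) and (ii) are both closed/open conditions cut out by the torus orbit, so no additional sieving (and in particular no use of the $P^-$ conditions beyond $p>N$) is needed here — the $P^-(n(Bn+1))$-type machinery is only invoked in the later sections where this lemma gets combined with level-set considerations.
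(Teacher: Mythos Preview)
Your approach coincides with the paper's in the ``good prime'' case: both your Step~2 and the paper's first case apply two-dimensional Kronecker once a prime $p>N$ with $\{1,\arg f(p)/2\pi,\, u\log p/2\pi\}$ linearly independent over $\mb{Q}$ is found, and the passage from $m$ to $km$ is handled identically. The gap is in your Step~1: the hypothesis does \emph{not} force such a prime to exist, and your proposed amalgamation-to-contradiction cannot go through.

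A clean counterexample, already visible when $u=0$: take $f(p)=e(1/p)$ for all primes $p$. For every $l\in\mb{N}$ and $t\in\mb{R}$ there are infinitely many primes with $f(p)^l\ne p^{it}$ (for $t=0$ one needs only $p\nmid l$; for $t\ne 0$ the arguments $l/p$ and $t\log p/2\pi$ agree for at most finitely many $p$), so the hypothesis holds. Yet \emph{every} $f(p)$ is a root of unity, so no prime has $\arg f(p)/2\pi$ irrational. Your ``clear denominators prime by prime'' move, borrowed from the proof of Lemma~\ref{SKETCH2}, succeeds there only because finitely many primes $p\le N$ are in play; here each prime contributes a relation $f(p)^{l_p}=1$ with $l_p=p$, and there is no common $l$ absorbing them all. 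The same obstruction persists for $u\ne 0$: with $f(p)=e(1/p)\,p^{iu}$ the hypothesis still holds, but $\alpha_p-\beta_p=1/p\in\mb{Q}$ for every $p$, so $1,\alpha_p,\beta_p$ are always dependent and no single prime yields a dense two-dimensional orbit.

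The paper does not seek a contradiction in this second case. Instead it notes that if $f(p)p^{-iu}=:\xi_p$ is a root of unity for every $p>N$, then the orders of the $\xi_p$ cannot be uniformly bounded (that \emph{would} give $f(p)^{M!}=p^{iuM!}$ for all large $p$, contradicting the hypothesis). One may therefore pick $p>N$ with $\xi_p=e(a/b)$ and $b$ arbitrarily large relative to $k$ and $\eta$. Writing $m=k(gb+r)$, the factor $f(p)^m=e(akr/b)\,p^{ium}$ splits into a $b$-periodic piece $e(akr/b)$, which runs through a $k/b$-net in $\mb{T}$ as $r$ varies and hence lands within $\eta$ of $z$ for some $r$; having fixed $r$, one then uses the density of $\{p^{iukbg}\}_g$ in $\mb{T}$ to choose $g$ forcing $p^{ium}$ near $1$. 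This two-stage choice of the exponent replaces the unavailable single Kronecker step and is the ingredient your outline is missing.
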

\begin{proof}
We consider two cases. First, suppose there is a prime $p > N$ for which $f(p)$ and $p^{iu}$ are such that if $f(p) = e(\alpha)$ and $p^{iu} = e(\beta)$ then $\{1,\alpha,\beta\}$ are $\mb{Q}$-linearly independent. Equivalently, $\{1,k\alpha,k\beta\}$ is $\mb{Q}$-linearly independent.  
%$t/u \notin \mb{Q}$. By Lemma \ref{CHEB} above, we may choose $n$ such that $f(n) = h(n)n^{it} + O(\eta^2/k)$, whence we get that $f(n^k) = n^{itk} + O(\eta^2)$ in light of the fact that $K|k$ and $h^K = 1$. Now, writing $n^{itk} = e(\alpha)$ and $n^{iuk}$ as $e(\beta)$, we have that $\{1,\alpha,\beta\}$ is $\mb{Q}$-linearly independent. 
In this case, we can apply Kronecker's theorem to find $m_0$ such that 
$$\|(f(p)^{km_0},p^{iukm_0}) -(z,1)\|_{\ell_1} \leq \min\{\eta,\delta\}^2,$$ 
and the claim follows with $m = km_0$ and $n = p$. \\
Now, suppose that for all primes $p>N$ we have that $f(p)$ and $p^{iu}$ have $\mb{Q}$-linearly dependent arguments. Equivalently, we can find a root of unity $\xi_p$ such that $f(p) = \xi_pp^{iu}$ for each prime $p>N$. We consider two subcases of this case. \\
First, if $\{\xi_p\}_{p > N}$ is a set of roots of unity of \emph{bounded} order, say $M$, then it follows that $f(p)^{M!} = p^{iuM!}$ for all $p > N$. This contradicts our initial assumption that $f(p)^k \neq p^{it}$ for infinitely many $p$ (with $k = M!$ and $t = uM!$). \\
Next, suppose that $\{\xi_p\}_{p>N}$ is such that for any $M \geq 1$ we can find a prime $p > N$ such that $\xi_p = e(a/b)$ with $b > Mk$ and $(a,b) = 1$. Pick $M > 2\eta^{-2}$, and choose $p = p(M)$ in this way. For $g,r \in \mb{N}$ parameters to be chosen, note that
$$f(p)^{k(gb + r)} = e(ark/b) p^{ik(gb+r)u}.$$
Writing $z= e(\gamma)$, we may select $ 0 \leq \ell \leq b-1$ such that $\gamma \in [k\ell/b,k(\ell + 1)/b]$, whence
$$|\gamma - k\ell/b| \leq k/b \leq 1/M < \frac{1}{2}\eta^2.$$ 
We will thus pick $r$ such that $ar \equiv \ell (b)$, so that $f(p)^{k(gb+r)} = z p^{ik(gb+r)u}.$ \\
Now, if $u = 0$ then we are done, as we can take $n = p$ and $m = kr$ (i.e., with $g = 0$). Otherwise, the sequence $\{p^{ikbug}\}_g$ is dense in $\mb{T}$. Thus, having already chosen $r$, it follows that we can pick $g$ such that $|p^{ikgbu} -p^{-ikru}| \leq \frac{1}{2}\min\{\eta^2,\delta^2\}$. Hence, we get that 
\begin{align*}
|p^{ik(gb+r)u} - 1| &= |p^{ikgbu} -p^{ikru}| \leq \delta^2,\\
|f(p)^{k(gb+r)} -z| &\leq |f(p)^{k(gb+r)} - z p^{ikgbu}| + |z||p^{ikgbu}-1| \leq \eta^2.
\end{align*}
Thus, selecting $m = k(gb+r)$ and $n = p$ suffices to prove the claim in this case.
\end{proof}
\begin{lem} \label{NOTZERO}
%Let $x \geq 3$ be large, and let $\delta,\eta > 0$ be small. 
% with $1 \leq |u| \log x \ll \log x$.e 
Let $f,g: \mb{N} \ra \mb{T}$ be completely multiplicative functions such that $\bar{\{f(n)\}_n} = \bar{\{g(n)\}_n} = \mb{T}$, but that $\bar{\{(f(n),g(n+1))\}_n} \neq \mb{T}^2$, and furthermore that $f$ satisfies the hypotheses of Lemma \ref{PPOWER}. Suppose that $f$ and $g$ are, respectively, $h_1n^{it}$- and $h_2n^{it'}$-pretentious, where $h_1$ and $h_2$ are pseudocharacters of conductor $q_1$ and $q_2$, respectively, and $h_1^{kr} = h_2^{ls} = 1$.
%Let $f: \mb{N} \ra \mb{T}$ be an irrational, completely multiplicative function, and let $h_1,h_2$ be as in the statement of the proposition. 
Finally, let $N,B \geq 1$, with $2q_1q_2|B$, and let $u \in \mb{R}$ and $z \in \mb{T}$. Then for any $\eta > 0$ sufficiently small (in terms of $u$) 
there is a 
%Then for any $u \in \mb{R}$ and $z \in \mb{T}$ there is a 
%$\beta \in \mu_{ls}$ and a 
positive upper density subset $T_z = T_z(\eta)$ of $\mc{A}_{N,B,I}(h_1,h_2;1,1;u)$ on which $|f(n)-z| < \eta$, where $I$ is the symmetric arc of length $2\eta$ about 1.
\end{lem}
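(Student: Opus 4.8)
The plan is to realise $T_z$ as a dilate $\{n_1 n' : n'\in\mc{G}\}$, where $n_1$ is a single cleverly chosen integer and $\mc{G}$ is a positive-density set of $n'$ produced by Proposition \ref{ONES}. The point is that $f$ is irrational in the sense of Lemma \ref{PPOWER}, so $f(n_1)$ can be pushed close to \emph{any} prescribed $z\in\mb{T}$ while keeping $h_1(n_1)=1$ and $n_1^{iu}$ close to $1$; then, taking $n'$ with $h_1(n')=1$ and $f(n')\approx 1$, we will get $f(n_1n')=f(n_1)f(n')\approx z$. Since $Bn+1=(Bn_1)n'+1$ when $n=n_1n'$, multiplicativity of $h_1$ and the shape of the linear form let us check that $n_1n'$ lands in $\mc{A}_{N,B,I}(h_1,h_2;1,1;u)$.

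First I would fix $\eta>0$ small (in terms of $u$, $t$ and $k,r,l,s$) and set $\eta_0:=\eta/100$. Applying Lemma \ref{CHEB} to $f$ with constant $C=3$ produces a parameter $N$, which -- since Lemma \ref{CHEB} chooses $N$ independently of the auxiliary modulus -- we may enlarge past the given $N$ and past $q_1,q_2$, together with a sequence $x_j\ra\infty$. Next I would invoke Lemma \ref{PPOWER} for $f$ with this $N$, with the point $z$ and the character $u$, and with the integer "$k$" there taken to be $kr$: this yields $n_0$ with $P^-(n_0)>N$ and an exponent $m$ with $kr\mid m$, $|f(n_0)^m-z|<\eta_0$, and $n_0^{ium}$ in an arbitrarily short arc about $1$. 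Put $n_1:=n_0^m$ and $B':=Bn_1$, an even multiple of $2q_1q_2$. Then $h_1(n_1)=h_1(n_0)^m=1$ (as $h_1^{kr}=1$ and $kr\mid m$), $P^-(n_1)>N$, $|f(n_1)-z|<\eta_0$, and $n_1^{iu}$ lies within $\eta_0/10$ of $1$.

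Now I would apply Proposition \ref{ONES} with the pseudocharacters $h_1,h_2$, the modulus $B'$ and $(\alpha,\beta)=(1,1)$ to produce $\gg\eta_0^{2}\,\Phi_{N,B'}(x_j)$ integers $n'\le x_j$ lying in $\mc{A}_{N,B'}(h_1,h_2;1,1)$ on which, in addition, $(n')^{iu}$ and (when $t\neq 0$) $(n')^{it}$ are within $\eta_0$ of $1$. Which part of Proposition \ref{ONES} to use depends on $t$: if $u/t\notin\mb{Q}$ (so $t\neq 0$) use part (b) with the pair of characters $u$ and $t$; if $u/t=a/b\in\mb{Q}\setminus\{0\}$ in lowest terms, use part (a) with the single character $t/b$, so that $(n')^{it}=((n')^{it/b})^{b}$ and $(n')^{iu}=((n')^{it/b})^{a}$ are both forced near $1$ (choose the arc of length $\eta_0/(|a|+b)$); if $t=0\neq u$ use part (a) with the character $u$; and if $u=0$ use part (a) with the character $t$ (or, if also $t=0$, with no archimedean condition), noting that then $\mc{A}_{N,B,I}(h_1,h_2;1,1;0)=\mc{A}_{N,B}(h_1,h_2;1,1)$. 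Discarding from the resulting set the at most $O(\eta_0^{3}\Phi_{N,B'}(x_j))$ integers on which Lemma \ref{CHEB} fails -- a genuinely smaller quantity than the main term once $\eta$ is small enough in terms of $u,t,k,r,l,s$ -- yields a set $\mc{G}$ with $|\mc{G}\cap[1,x_j]|\gg\eta_0^{2}\Phi_{N,B'}(x_j)$; for $n'\in\mc{G}$ we have $h_1(n')=1$, hence $f(n')=h_1(n')(n')^{it}+O(\eta_0)=1+O(\eta_0)$, and $(n')^{iu}$ within $\eta_0$ of $1$.

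Finally I would set $T_z:=\{n_1n':n'\in\mc{G}\}$. For $n=n_1n'$ with $n'\in\mc{G}$: $P^-(n(Bn+1))=P^-\!\big(n_1n'(B'n'+1)\big)>N$, since each of $n_1$, $n'$, $B'n'+1$ has no prime factor $\le N$ (and $\gcd(n_1,B'n'+1)=\gcd(n',B'n'+1)=1$); $h_1(n)=h_1(n_1)h_1(n')=1$; $h_2(Bn+1)=h_2(B'n'+1)=1$; and $n^{iu}=n_1^{iu}(n')^{iu}\in I$ because both factors lie within $\eta_0$ of $1$. Also $f(n)=f(n_1)f(n')$, so $|f(n)-z|\le|f(n')-1|+|f(n_1)-z|\ll\eta_0<\eta$. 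Hence $T_z\subseteq\mc{A}_{N,B,I}(h_1,h_2;1,1;u)$ (a subset of the corresponding set for the original $N$) with $|f(n)-z|<\eta$ throughout, and since $|T_z\cap[1,n_1x_j]|=|\mc{G}\cap[1,x_j]|\gg\eta_0^{2}\Phi_{N,B'}(x_j)\asymp\eta_0^{2}x_j/n_1$, the set $T_z$ has positive upper density. I expect the delicate point to be the case $u/t\in\mb{Q}\setminus\{0\}$: part (b) of Proposition \ref{ONES} is then unavailable, and one must instead engineer both constraints $(n')^{iu}\approx 1$ and $(n')^{it}\approx 1$ out of the single twist $(n')^{it/b}$; this is also exactly where one needs $\eta$ small relative to $u$, in order to keep the exceptional set from Lemma \ref{CHEB} negligible against the now $u$-dependent main term of Proposition \ref{ONES}.
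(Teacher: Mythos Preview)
Your proof is correct and follows essentially the same route as the paper: both arguments use Lemma~\ref{PPOWER} to produce a prime power $n_1=n_0^m$ (with $kr\mid m$) satisfying $f(n_1)\approx z$ and $n_1^{iu}\approx 1$, pass to the modulus $B'=Bn_1$, apply Proposition~\ref{ONES} (case-splitting on whether $u/t$ is rational) to obtain a positive-density set $\mc{G}\subseteq\mc{A}_{N,B',I,I}(h_1,h_2;1,1;t,u)$, invoke Lemma~\ref{CHEB} to force $f(n')\approx h_1(n')(n')^{it}=1$ off a negligible exceptional set, and finally take $T_z=\{n_1n':n'\in\mc{G}\}$. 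Your treatment of the rational case $u/t=a/b$ via the single twist $(n')^{it/b}$ is slightly tidier than the paper's formulation, and you are a little more explicit about the degenerate cases $t=0$ and $u=0$, but the substance is the same.
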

In words, the lemma states that we can find a ``large'' set of integers $n$ satisfying $P^-(n(Bn+1)) > N$ such that $f(n)$ is close to $z$, $h_1(n) = h_2(n) = 1$, and $n^{iu} \in I$. 
\begin{proof}
%Varying $u$ slightly, we can assume without loss of generality that $u \in 2\pi \mb{Q}$. 
By Lemma \ref{PPOWER}, choose $n_0$ with $P^-(n_0) > N$ and $m$ a multiple of $kr$ such that $f(n_0)^m = z + O(\eta^2)$ and $(n_0)^{imu} = 1 + O(\eta^2)$, and set $B' := Bn_0^m$. Since $f$ is pretentious to $h_1(n)n^{it}$, by Lemma \ref{CHEB} we can choose $x$ from a suitable infinite sequence (and $N$ slightly larger if necessary) such that for all but $O(\eta^3/(klrsn_0^m)\Phi_{N,B'}(x))$ integers $n \leq x$ with $P^-(n(B'n+1)) > N$ we have $f(n) = h_1(n)n^{it} + O(\eta^2)$. Now, furthermore, Proposition \ref{ONES} gives
$$|\mc{A}_{N,B',I,I}(h_1,h_2;1,1;t,u) \cap [1,x/n_0^m]| \gg \frac{\eta^2}{klrs}\Phi_{N,B'}(x/n_0^m) \asymp \frac{\eta^2}{klrsn_0^m}\Phi_{N,B'}(x),$$
provided that $t/u \notin \mb{Q}$. If, instead, $t/u = a/b$ and $a < b$ and $\eta^{-1/2} > b/a$ (without loss of generality), we can conclude that if $n^{it} = 1 + O(\eta^2b/a)$ then $n^{iu} = 1 + O(\eta^{3/2})$, and hence
$$|\mc{A}_{N,B',I,I}(h_1,h_2;1,1;t,u) \cap [1,x/n_0^m]| \geq |\mc{A}_{N,B,I'}(h_1,h_2;1,1;t) \cap [1,x/n_0^m]| \gg \frac{\eta^2}{klrsn_0^m}\Phi_{N,B'}(x),$$
where $I'\subset I$ is a symmetric subinterval of length $\eta^2$ about 1. In either case, we see that on a positive upper density subset of $\mc{A}_{N,B',I,I}(h_1,h_2;1,1;t,u)$, we have $$f(n) = h_1(n)n^{it} + O(\eta^2) = 1 + O(\eta^2).$$
Now, let $n \in \mc{A}_{N,B',I,I}(h_1,h_2;1,1;t,u)$, and set $n' := n_0^mn$. Then $h_1(n') = h_1(n_0)^mh_1(n) = 1$, since $kr|m$ and $h^{kr} = 1$. Furthermore, we have $h_2(Bn'+1)= h_2(B'n+1) = 1$ by assumption. Moreover, we get $(n')^{iu} = (n_0^m)^{iu} n^{iu} = 1 + O(\eta^2)$ by our assumptions on $n_0$ and $n$, and finally $f(n')= f(n_0)^mf(n) = z + O(\eta^2)$. It follows upon setting
$$T_z := \{n' = n_0^mn : n \in \mc{A}_{N,B',I,I}(h_1,h_2;1,1;t,u)\}$$
that $T_z$ has positive upper density (the same as that of $\mc{A}_{N,B',I,I}(h_1,h_2;1,1;t,u)$), that $T_z \subseteq \mc{A}_{N,B,I}(h_1,h_2;1,1;u),$ and that on $T_z$ we have $f(n') = z + O(\eta^2)$. This implies the claim if $\eta$ is small enough.
\end{proof}
\begin{proof}[Proof of Theorem \ref{IRRAT}, Part 2] 
%As mentioned in the above remark, 
We assume here that $f$ is both irrational, and such that for each $k \in \mb{N}$ and $t \in \mb{R}$, there are infinitely many primes $p$ for which $f(p)^k \neq p^{it}$; we can do this since the other cases where $f$ is irrational were treated in the previous section. \\
%This is only used at one point in the argument, and the remainder of it is independent of this assumption. The argument with $g$ in place of $f$ works symmetrically.\\
Now, suppose that $\{(f(n),g(n+1)\}_n$ stays outside of an arc of radius $\e$ (in $\ell^1$) about $(a,b) \in \mb{T}^2$. This means that $|f(n)-a| + |g(n+1)-b| \geq \e$ for all large $n$. 
By Proposition \ref{REDUCE}, there are minimal positive integers $k,l,r$ and $s$, 
%such that $\sum_{n \leq x} \frac{f(n)^kg(n+1)^l}{n} \gg_{\e} \log x$. In particular
pseudocharacters $h_1,h_2$ with conductors $q_1,q_2 = O_{\e}(1)$ taking values in $\mu_{kr}$ and $\mu_{ls}$ respectively, and $t_1,t_2\in \mb{R}$ such that $\mb{D}(f,h_1n^{it_1};x), \mb{D}(g,h_2n^{it_2};x) \ll_{\e} 1$. \\
%Notice that if $m := [k,l]$ then $\mb{D}(g^m,h_2^mn^{imt_2};x) \ll_{\e} 1$ as well by the triangle inequality, so replacing $h_2$ by $h_2^m$, and $g$ by $g^m$, we can assume that $h_1$ and $h_2$ take values in roots of unity of the same order.
%Now, let $\delta_1,\delta_2> 0$ be such that $\delta_1+\delta_2 < \e$ and $2m\delta_2 < \delta_1$. 
Let $I$ be a symmetric interval of length $\e^2$ about 1, and let $B$ be an integer of our choosing, chosen subject only to the constraint that $2q_1q_2|B$. Consider those $n \in \mc{A}_{N,B,I}(h_1,h_2;1,1;t_1-t_2) =: T$. By Proposition \ref{ONES}, $T$ has positive upper density. We have three possible scenarios:
\begin{enumerate}[(i)]
\item there is a positive upper density subset of $T$ on which $|f(Bn)-a| \geq \frac{3\e}{4}$ and $|g(Bn+1)-b| < \frac{\e}{4}$; 
\item there is a positive upper density subset of $T$ on which $|g(Bn+1)-b| \geq \frac{3\e}{4}$ and $|f(Bn)-a| < \frac{\e}{4}$;
\item except on an upper density zero subset of $T$ we have $|f(Bn)-a|\geq \frac{3\e}{4}$ and $|g(Bn+1)-b| \geq \frac{\e}{4}$.
\end{enumerate}
%(the remaining alternative is obviously not possible by assumption). \\
Consider alternative (iii). 
%We may suppose without loss of generality that $f$ is irrational (the argument being similar for $g$). 
It is clearly true that $|f(n)-a\bar{f(B)}| \geq \frac{3\e}{4}$ for all but a zero upper density subset of $\mb{N}$. By Lemma \ref{NOTZERO} (taking $u = t_1-t_2$ and $z = a\bar{f(B)}$ in the notation there), we can find a positive upper density subset $T_{a\bar{f(B)}} \subset T$ on which $|f(n)- a\bar{f(B)}| < \frac{3\e}{4}$, contradicting the conditions of case (iii). Thus, (iii) can clearly not occur. \\
%We prove the following simple lemma, which shows that this is not possible for $\e$ sufficiently small.
%Thus, by the lemma, one of i) or ii) must occur. Note that we can apply the same argument with $\delta_1$ and $\delta_2$ reversed, and the same conclusion holds.\\
Suppose next that we are in case (i). 
%Note that
Then
%\begin{equation*}
%|g(n+1)^m-b^m| \leq m|g(n+1)-b| < m\delta_2 < \frac{1}{2}\delta_1,
%\end{equation*}
%so that if we put $c = b^m$ then 
\begin{equation*}
|f(Bn)\bar{g(Bn+1)}-a\bar{b}| \geq |f(Bn)-a| - |g(Bn+1)-b| \geq \frac{\e}{2},
\end{equation*}
on a subset of positive upper density in $T$. The same condition occurs in case (ii) as well (by essentially the same argument).\\
Now, put $F(n) := f(n)\bar{h_1(n)} n^{-it_1}$ and $G(n) := g(n)\bar{h_2(n)}n^{-it_2}$. We recall that for $n \in T$, $h_1(n) = h_2(n)= 1$ and $n^{i(t_2-t_1)} = 1 + O(\e^2)$. For each $n \in T$ sufficiently large we have
\begin{align*}
F(n)\bar{G(Bn+1)} &= f(n)\bar{g(Bn+1)} n^{-it_1}(Bn+1)^{it_2} = f(Bn)\bar{g(Bn+1)} \cdot n^{i(t_2-t_1)} \bar{f(B)}B^{it_2}\\
&= f(Bn)\bar{g(Bn+1)} \cdot \bar{f(B)}B^{it_2} + O(\e^2).
\end{align*}
It follows that on some positive upper density subset of $T$, we have
\begin{equation*}
|F(n)\bar{G(Bn+1)} - \bar{b}a\bar{f(B)}B^{it_2}| \geq \frac{\e}{3},
\end{equation*}
for $\e$ sufficiently small. At this point, we will select $B$ using the following remarks. First, we are supposing that for any $k \in \mb{N}$ and $t \in \mb{R}$ there is a prime $p$ such that $f(p)^k \neq p^{it}$. As such, for any $M \geq 1$ we can find $P$ sufficiently large in terms of $M$ such that $f(P)P^{-it_2} = e(\alpha)$ where either $\alpha \notin \mb{Q}$ or else $\alpha \in \mb{Q}$ with denominator at least $M$. Taking $M \asymp \e^{-2}$, we can choose $r$ such that $\alpha r$ lies in the same $O(\e^2)$-length arc of $\mb{T}$ as the argument of $b\bar{af(2q_1q_2)}$. Consequently, with this choice of $r$ we find that
$$f(P)^rP^{-irt_2} = b(2q_1q_2)^{it_2}\bar{af(2q_1q_2)} + O(\e^2).$$ 
Finally, we choose $B = 2q_1q_2P^r$, and with this choice 
$$f(B)B^{-it_2} = \bar{b}a + O(\e^2).$$ 
%which we can do by selecting $B = 2q_1q_2P_NP^r$, where $P$ is a prime such that $f(P)P^{-i(t_1-t_2)}$ has an argument that is either irrational or has a large denominator, and such that $f(P)^rP^{-ir(t_1-t_2)} = b\bar{aF(2q_1q_2)}\bar{\beta} + O(\e^2)$. This can be done .
It then follows that
\begin{equation*}
|F(n)\bar{G(Bn+1)} - 1| \geq \frac{\e}{4}
\end{equation*}
for each $n$ belonging to a positive upper density subset of $T$. \\
Now, with $N$ large we select by Szemer\'{e}di's theorem (see \cite{Gowers}) a long arithmetic progression $S \subset T$; in particular, $|S| \ra \infty$ as $x \ra \infty$. As in the proof of Theorem 2.1 in \cite{RIG}, we have that on one hand
\begin{align*}
\frac{1}{16}\e^2|S\cap [1,x]| &\leq \sum_{n \leq x \atop n \in S} |F(n)\bar{G}(Bn+1) - 1| \\
&= 2|S\cap [1,x]|\left(1-\text{Re}\left(\frac{1}{|S\cap [1,x]|} \sum_{n \leq x \atop n \in S} F(n)\bar{G(Bn+1)}\right)\right).
\end{align*}
On the other hand, since $F$ and $G$ are both 1-pretentious and $N$ is large enough we have by Lemma 2.13 of \cite{RIG} that
\begin{equation*}
\sum_{n \leq x \atop n \in S} F(n)\bar{G(Bn+1)} = |S \cap [1,x]|\left(1 + O(\e^3)\right),
\end{equation*}
as $x \ra \infty$ along a suitable infinite subsequence, and the implicit constant is independent of $\e$. This yields the inequality
\begin{equation*}
\frac{1}{16}\e^2 |S\cap [1,x]| \ll \e^3|S\cap [1,x]|,
\end{equation*}
which is patently false when $\e$ is sufficiently small. \\
Note that we can argue in precisely the same way with the roles of $f$ and $g$ reversed to conclude that case (ii) cannot occur provided that $g(n)^l \neq n^{it'}$ for some $t'$ (looking at the forms $(g(n),f(n-1))$ instead). This contradiction completes the proof of Theorem \ref{IRRAT} in those cases not covered in the previous section.
% that $f$ (or $g$) is irrational and $f(p)^k \neq p^{it}$ for all sufficiently large primes (or the same with $g$ in place of $f$). The remaining cases were settled in the previous section \ref{DEGENCASES}.
\end{proof}
%In order to complete the proof of (the emended version of) K\'{a}tai's conjecture, it remains to show that the real numbers $t$ and $t'$ satisfy $t = \lambda t'$, with $\lambda \in \mb{Q}$. 

\end{document}